

\documentclass[12pt]{article} 



\usepackage{endnotes}
\let\footnote=\endnote

%


\usepackage{natbib}
\bibpunct[, ]{(}{)}{,}{a}{}{,}%
\usepackage{booktabs} 
\usepackage{array} 
\usepackage{paralist} 
\usepackage{verbatim} 

\usepackage{graphicx,subfigure}
\usepackage{caption}
\usepackage{epstopdf}



\usepackage{amsmath, amssymb}

\usepackage{multirow}
\usepackage{algorithm}
\usepackage{algorithmic}
\usepackage{mathtools}
\usepackage{amsthm}

\newtheorem{prop}{Proposition}[section]
\newtheorem{thm}{Theorem}[section]

\usepackage{rotating}
\usepackage{pdflscape}
\usepackage{url}
\graphicspath{{figures/}} 

\newcommand{\exclude}[1]{}

\DeclareMathOperator{\atan2}{atan2}



         
\usepackage{setspace}
\onehalfspacing
\usepackage[margin=0.7in]{geometry}
\geometry{letterpaper} 

\usepackage{graphicx} 

\allowdisplaybreaks

\begin{document}




\title{Strong SOCP Relaxations for the Optimal Power Flow Problem}


\author{Burak Kocuk, Santanu S. Dey, X. Andy Sun  
}
	



\maketitle

\abstract{%
This paper proposes three strong second order cone programming (SOCP) relaxations for the AC optimal power flow (OPF) problem. These three relaxations are incomparable to each other and two of them are incomparable to the standard SDP relaxation of OPF. Extensive computational experiments show that these relaxations have numerous advantages over existing convex relaxations in the literature: (i) their solution quality is extremely close to that of the SDP relaxations (the best one is within $99.96\%$ of the SDP relaxation on average for all the IEEE test cases) and consistently outperforms previously proposed convex quadratic relaxations of the OPF problem, (ii) the solutions from the strong SOCP relaxations can be directly used as a warm start in a local solver such as IPOPT to obtain a high quality feasible OPF solution, and (iii) in terms of computation times, the strong SOCP relaxations can be solved an order of magnitude faster than standard SDP relaxations. For example, one of the proposed SOCP relaxations together with IPOPT produces a feasible solution for the largest instance in the IEEE test cases (the 3375-bus system) and also certifies that this solution is within $0.13\%$ of global optimality, all this computed within $157.20$ seconds on a modest personal computer. Overall, the proposed strong SOCP relaxations provide a practical approach to obtain feasible OPF solutions with extremely good quality within a time framework that is compatible with the real-time operation in the current industry practice.
}%

\section{Introduction}\label{sec:intro}

Optimal Power Flow (OPF) is a fundamental optimization problem in electrical power systems analysis \citep{Carpentier}. There are two challenges in the solution of OPF{.} First, it is an operational level problem solved every few minutes, hence the computational budget is limited. Second, it is a nonconvex optimization problem on a large-scale power network of thousands of buses, generators, and loads. The importance of the problem and the aforementioned difficulties have produced a rich literature, see e.g. \citep{Momoh11999, Momoh21999, Frank12012, Frank22012, FERC2012}.

Due to these challenges, the current practice in the electricity industry is to use the so-called DC OPF approximation \citep{FERC2011}. In contrast, the original nonconvex OPF is usually called the AC (alternating current) OPF. DC OPF is a linearization of AC OPF by exploiting some physical properties of the power flows in typical power systems, such as tight bounds on voltage magnitudes at buses and small voltage angle differences between buses. However, such an approximation completely ignores important aspects of power flow physics, such as the reactive power and voltage magnitude. To partially remedy this drawback, the current practice is to solve DC OPF and then to solve a set of power flow equations with the DC OPF solution to compute feasible reactive powers and voltages. However, it is clear such an approach cannot guarantee any optimality of the AC power flow solution obtained. To be concise, we will use OPF to denote AC OPF in the remainder of the paper. 

In order to solve the OPF problem, the academic literature has focused on improving nonlinear optimization methods such as the interior point methods (IPM) to compute local optimal solutions, see e.g. \citep{Wu1994, Torres1998, Jabr2002, Zimmerman2007}. A well-known implementation of IPM tailored for the OPF problem is MATPOWER \citep{Matpower}. Although these local methods are effective in solving IEEE test instances, they do not offer any quantification of the quality of the solution.

In the recent years, much research interests have been drawn to the convex relaxation approach. In particular, the second-order cone programming (SOCP) and the semidefinite programming (SDP) relaxations are first applied to the OPF problem in \cite{Jabr06}, and \cite{Bai2008, Bai2009} and \cite{Lavaei12}. Among these two approaches, the SDP relaxation and its variations have drawn significant attention due to their strength. Since convex conic programs are polynomially solvable, the SDP relaxation offers an effective way for obtaining global optimal solutions to OPF problems whenever the relaxation is exact. Unfortunately, the exactness of the SDP relaxations can be guaranteed only for a restricted class of problems under some assumptions, e.g. radial networks \citep{Zhang12, bose2011optimal, bose2012quadratically} under load over-satisfaction \citep{sojoudi2012physics} or absence of generation lower bounds \citep{Lavaei14}, or lossless networks with cyclic graphs \citep{Zhang2013}. A comprehensive survey can be found in \cite{low2014convexi, low2014convexii}. When the SDP relaxation is not exact, it may be difficult to put a physical meaning on the solution.

A way to further strengthen the SDP relaxation is to solve a hierarchy of moment relaxation problems \citep{lasserre, Parrilo2003}. This approach is used in \cite{josz} to globally solve small-size problems, and is also used in \cite{molzahn2014} to obtain tighter lower bounds for larger problems of 300-bus systems. However due to the NP-hardness of the OPF problem \citep{Lavaei12}, in general the order of the Lasserre hierarchy required to obtain a global optimal solution can be arbitrarily large. Furthermore, even the global optimal objective function value is achieved, the solution matrices may not be rank one, which poses another challenge in terms of recovering an optimal voltage solution \citep{Lavaei14}. This indicates the computational difficulty of the SDP relaxation approach to practically solve real-world sized power networks with more than a thousand buses. For such large-scale OPF problems, a straightforward use of IPM to solve the SDP relaxation becomes prohibitively expensive. Interesting works have been done to exploit the sparsity of power networks as in \cite{jabr2012, molzahn2013, madani2014, molzahn2014, madani2015}. The underlying methodology utilizes techniques such as chordal graph extension, tree-width decomposition, and matrix completion, as proposed and developed in \cite{fukuda2001} and \cite{nakata2003}.

More recently, there is a growing trend to use computationally less demanding relaxations based on linear programming (LP) and SOCP to solve the OPF problem. For instance, linear and quadratic envelopes for trigonometric functions in the polar formulation of the OPF problem are constructed in \cite{coffrin2014, hijazi2013, coffrin2015}. In \cite{bienstock2014}, LP based outer approximations are proposed which are strengthened by incorporating several different types of valid inequalities.

This paper proposes new strong SOCP relaxations of the OPF problem and demonstrates their computational advantages over the SDP relaxations and previously described convex quadratic relaxations for the purpose of practically solving large-scale OPF problems. Our starting point is an alternative formulation for the OPF problem proposed in \cite{Exposito99} and \cite{Jabr06}. In this formulation, the nonconvexities are present in two types of constraints: one type is the surface of a rotated second-order cone, and the other type involves arctangent functions on voltage angles. The SOCP relaxation in \cite{Jabr06} is obtained by convexifying the first type of constraints to obtain SOCP constraints and completely ignoring the second type constraints. We refer to this relaxation as the \emph{classic SOCP relaxation} of the OPF problem. We prove that the standard SOCP relaxation of the rectangular formulation of OPF provides the same bounds as the classic SOCP relaxation. Therefore, if we are able to add convex constraints that are implied by the original constraints involving the arctangent function to the classic SOCP relaxation, then this could yield \emph{stronger relaxation than the classic SOCP relaxation that may also potentially be incomparable to (i.e. not dominated by nor dominates) the standard SDP relaxations}. In this paper, we propose three efficient ways to achieve this goal.

In the following, we summarize the key contributions of the paper.
\begin{enumerate}
	\item We theoretically analyze the relative strength of the McCormick (linear programming), SOCP, and SDP relaxations of the rectangular and alternative formulations of the OPF problem. As discussed above, this analysis leads us to consider strengthening the classic SOCP relaxation as a way forward to obtaining strong and tractable convex relaxations.
	\item We propose three efficient methods to strengthen the classic SOCP relaxation. 
	\begin{enumerate}
		\item In the first approach, we begin by reformulating the arctangent constraints as polynomial constraints whose degrees are proportional to the length of the cycles. This yields a bilinear relaxation of the OPF problem in extended space (that is by addition of artificial variables), where the new variables correspond to edges obtained by triangulating cycles. With this reformulation, we use the McCormick relaxation of the proposed bilinear constraints to strengthen the classic SOCP relaxation. The resulting SOCP relaxation is shown to be incomparable to the SDP relaxation. 
		\item In the second approach, we construct a polyhedral envelope for the arctangent functions in 3-dimension, which are then incorporated into the classic SOCP relaxation. This SOCP relaxation is also shown to be incomparable to the standard SDP relaxation.
		\item In the third approach, we strengthen the classic SOCP relaxation by dynamically generating valid linear inequalities that separate the SOCP solution from the SDP cone constraints over cycles. We observe that running such a separation oracle a few iterations already produces SOCP relaxation solutions very close to the quality of the full SDP relaxation. 
	\end{enumerate}
	\item We conduct extensive computational tests on the proposed SOCP relaxations and compare them with existing SDP relaxations \citep{Lavaei14} and quadratic relaxations \citep{coffrin2014, coffrin2015}. The computational results can be summarized as follows.
		\begin{enumerate}
			\item Lower bounds: The lower bounds obtained by the third proposed SOCP relaxation for all MATPOWER test cases from 6-bus to 3375-bus are on average within $99.96\%$ of the lower bounds of the SDP relaxation.  The other two proposed relaxation are also on average within $99.7\%$ of the SDP relaxation.
			\item Computation time: Overall, the proposed SOCP relaxations can be solved orders of magnitude faster than the SDP relaxations. The computational advantage is even more evident when a feasible solution of the OPF problem is needed. As an example, consider the largest test instance of the IEEE 3375-bus system. Our proposed SOCP relaxation together with IPOPT provides a solution for this instance and also certifies that this solution is within $0.13\%$ of global optimality, all computed in $157.20$ seconds on a modest personal computer.
			\item Comparison with other convex quadratic relaxation: The proposed SOCP relaxations consistently outperform the existing quadratic relaxation in \cite{coffrin2014}  and \cite{coffrin2015} on the test instances of typical, congested, and small angle difference conditions.
			\item Non-dominance with standard SDP relaxation: The computation also shows that the proposed SOCP relaxations are neither dominated by nor dominates the standard SDP relaxations. 
			\item Robustness: The proposed SOCP relaxations perform consistently well on IEEE test cases with randomly perturbed load profiles. 
		\end{enumerate}
\end{enumerate}

The paper is organized as follows. Section \ref{sec:opfintro} introduces the standard rectangular formulation and the alternative formulation of the OPF problem. Section \ref{Comparison Radial} compares six different convex relaxations for the OPF problem based on the rectangular and alternative formulations. Section \ref{section:meshed} proposes three ways to strengthen the classic SOCP relaxation. Section \ref{comp experiment} presents extensive computational experiments. We make concluding remarks in Section \ref{sec:conc}.

\section{Optimal Power Flow Problem}\label{sec:opfintro}
Consider a power network $\mathcal{N} = (\mathcal{B},\mathcal{L})$, where $\mathcal{B}$ denotes the node set, i.e., the set of buses, and $\mathcal{L}$ denotes the edge set, i.e., the set of transmission lines. Generation units (i.e. electric power generators) are connected to a subset of buses, denoted as $\mathcal{G}\subseteq \mathcal{B}$. We assume that there is electric demand, also called load, at every bus. The aim of the optimal power flow problem is to satisfy demand at all buses with the minimum total production costs of generators such that the solution obeys the physical laws (e.g., Ohm's Law and Kirchoff's Law) and other operational restrictions (e.g., transmission line flow limit constraints).

Let $Y \in \mathbb{C}^{|\mathcal{B}| \times |\mathcal{B}|}$ denote the nodal admittance matrix, which has components $Y_{ij}=G_{ij} + \mathrm{i}B_{ij}$ for each line $(i,j)\in\mathcal{L}$, and $G_{ii}=g_{ii}-\sum_{j\ne i} G_{ij}, B_{ii}=b_{ii}-\sum_{j\ne i} B_{ij}$, where $g_{ii}$ (resp. $b_{ii}$) is the shunt conductance (resp. susceptance) at bus $i\in\mathcal{B}$ and $\mathrm{i}=\sqrt{-1}$. 
Let $p_i^g, q_i^g$ (resp. $p_i^d, q_i^d$) be the real and reactive power output of the generator (resp. load) at bus $i$. The complex voltage (also called voltage phasor) $V_i$ at bus $i$ can be expressed either in the rectangular form as $V_i = e_i+\mathrm{i} f_i$ or in the polar form as $V_i = |V_i|(\cos\theta_i+\mathrm{i}\sin\theta_i)$, where $|V_i|^2=e_i^2 + f_i^2$ is the voltage magnitude and $\theta_i$ is the angle of the complex voltage. In power system analysis, the voltage magnitude is usually normalized against a unit voltage level and is expressed in per unit (p.u.). For example, if the unit voltage is 100kV, then 110kV is expressed as 1.1 p.u.. In transmission systems, the bus voltage magnitudes are usually restricted to be close to the unit voltage level to maintain system stability. 

With the above notation, the OPF problem is given in the so-called rectangular formulation:
\begin{subequations}\label{rect}
\begin{align}
\min  &\hspace{0.25em}  \sum_{i \in \mathcal{G}} C_i(p_i^g)  \label{obj} \\
  \mathrm{s.t.}   &\hspace{0.25em} p_i^g-p_i^d = G_{ii}(e_i^2+f_i^2) + \sum_{j \in \delta(i)}[ G_{ij}(e_ie_j+f_if_j) -B_{ij}(e_if_j-e_jf_i)]   & i& \in \mathcal{B} \label{activeAtBus} \\
  & \hspace{0.25em} q_i^g-q_i^d = -B_{ii}(e_i^2+f_i^2) + \sum_{j \in \delta(i)}[ -B_{ij}(e_ie_j+f_if_j) -G_{ij}(e_if_j-e_jf_i)]  & i& \in \mathcal{B} \label{reactiveAtBus} \\
  & \hspace{0.25em} \underline V_i^2 \le e_i^2+f_i^2 \le \overline V_i^2    & i& \in \mathcal{B} \label{voltageAtBus} \\
  & \hspace{0.25em}  p_i^{\text{min}}  \le p_i^g \le p_i^{\text{max}}     & i& \in \mathcal{G} \label{activeAtGenerator} \\
  & \hspace{0.25em} q_i^{\text{min}}  \le q_i^g \le q_i^{\text{max}}     & i& \in \mathcal{G}. \label{reactiveAtGenerator}
\end{align}
\end{subequations}
Here, the objective function $C_i(p_i^g)$ is typically linear or convex quadratic in the real power output $p_i^g$ of generator $i$. Constraints \eqref{activeAtBus} and \eqref{reactiveAtBus} correspond to the conservation of active and reactive power flows at each bus, respectively. $\delta(i)$ denotes the set of neighbor buses of bus $i$. Constraint \eqref{voltageAtBus} restricts voltage magnitude at each bus. As noted above, $\underline V_i$ and $\overline V_i$ are both close to  1 p.u. at each bus $i$. Constraints \eqref{activeAtGenerator} and \eqref{reactiveAtGenerator}, respectively, limit the active and reactive power output of each generator to respect its physical capability. 

Note that the rectangular formulation \eqref{rect} is a nonconvex quadratic optimization problem. However, quite importantly, we can observe that all the nonlinearity and nonconvexity comes from one of the following three forms:
(1) $e_i^2+f_i^2=|V_i|^2$, (2) $e_ie_j+f_if_j=|V_i||V_j|\cos(\theta_i-\theta_j)$, (3) $e_if_j-f_ie_j=-|V_i||V_j|\sin(\theta_i-\theta_j)$. To capture this nonlinearity, we define new variables $c_{ii}$, $c_{ij}$ and $s_{ij}$ for each bus $i$ and each transmission line $(i,j)$ as
$c_{ii} = e_i^2 + f_i^2$, $c_{ij} = e_ie_j+f_if_j$, $s_{ij}=e_if_j-e_jf_i$. These new variables satisfy the relation $c_{ij}^2 + s_{ij}^2=c_{ii}c_{jj}$. With a change of variables, we can introduce an alternative formulation of the OPF problem as follows:
\begin{subequations} \label{SOCP}
\begin{align}
 \min  &\hspace{0.5em}  \sum_{i \in \mathcal{G}} C_i(p_i^g) \\
  \mathrm{s.t.}   &\hspace{0.5em} p_i^g-p_i^d = G_{ii}c_{ii} + \sum_{j \in \delta(i)}\left( G_{ij}c_{ij} -B_{ij}s_{ij}\right)   & i& \in \mathcal{B} \label{activeAtBusR} \\
  & \hspace{0.5em} q_i^g-q_i^d = -B_{ii}c_{ii} + \sum_{j \in \delta(i)}\left( -B_{ij}c_{ij} -G_{ij}s_{ij}\right)  & i& \in \mathcal{B} \label{reactiveAtBusR} \\
  & \hspace{0.5em} \underline V_i^2 \le c_{ii} \le \overline V_i^2    & i& \in \mathcal{B}  \label{voltageAtBusR} \\
  & \hspace{0.5em} c_{ij}=c_{ji}, \ \ s_{ij}=-s_{ji}    &(&i,j) \in \mathcal{L} \label{cosine_sine}\\
  & \hspace{0.5em}  c_{ij}^2+s_{ij}^2  = c_{ii}c_{jj}     &(&i,j) \in \mathcal{L} \label{coupling}\\
\notag  & \hspace{0.5em} \eqref{activeAtGenerator},   \eqref{reactiveAtGenerator}. 
\end{align}
\end{subequations}

This formulation was first introduced in \cite{Exposito99} and \cite{Jabr06}. It is an exact formulation for OPF on a radial network in the sense that the optimal power output of \eqref{SOCP} is also optimal for \eqref{rect} and one can always recover the voltage phase angles $\theta_i$'s by solving the following system of linear equations with the optimal solution $c_{ij}, s_{ij}$:
\begin{equation} 
\theta_j - \theta_i = \atan2({s_{ij}},{c_{ij}}) \quad (i,j) \in \mathcal{L},  \label{arctan cons}
\end{equation}
which then provide an optimal voltage phasor solution to \eqref{rect} (see e.g. \cite{Zhang12}). Here, in order to cover the entire range of $2\pi$, we use the $\atan2(y,x)$ function\footnotemark\footnotetext{$\operatorname{atan2}(y, x) = \begin{cases}
\arctan\frac y x & \quad x > 0 \\
\arctan\frac y x + \pi& \quad y \ge 0 , x < 0 \\
\arctan\frac y x - \pi& \quad y < 0 , x < 0 \\
+\frac{\pi}{2} & \quad y > 0 , x = 0 \\
-\frac{\pi}{2} & \quad y < 0 , x = 0 \\
\text{undefined} & \quad y = 0, x = 0
\end{cases}$}, which takes value in $(-\pi, \pi]$, rather than $[-\pi/2, \pi/2]$ as is the case of the regular arctangent function. Unfortunately, for meshed networks, the above formulation \eqref{SOCP} can be a strict relaxation of the OPF problem. The reason is that,  given an optimal solution $c_{ij}, s_{ij}$ for all edges $(i,j)$ of \eqref{SOCP}, it does not guarantee that $\atan2({s_{ij}},{c_{ij}})$ sums to zero over all cycles. In other words, the optimal solution of \eqref{SOCP} may not be feasible for the original OPF problem \eqref{rect}. This issue can be fixed by directly imposing (\ref{arctan cons}) as a constraint \citep{Jabr07, Jabr08}. Thus \eqref{SOCP} together with (\ref{arctan cons}) is a valid formulation for OPF in mesh networks. Note that the constraints involving the $\atan2$ function are nonconvex. 

\noindent{\bf Line Flow Constraints:}
Typically, the OPF problem also involves the so-called transmission constraints  on transmission lines. In the literature, different types of line flow limits are suggested. We list a few of them below \citep{madani2015, coffrin2015} and present how they can be expressed in the space of $(c,s,\theta)$ variables:
\begin{enumerate}
\item
Real power flow on line $(i,j)$: $-G_{ij}c_{ii} + G_{ij}c_{ij} -B_{ij}s_{ij}$
\item
Squared voltage difference magnitude on line $(i,j)$: $c_{ii} - 2c_{ij} + c_{jj}$
\item
Squared current magnitude on line $(i,j)$: $(B_{ij}^2+G_{ij}^2)(c_{ii} - 2c_{ij} + c_{jj})$
\item
Squared apparent power flow on line $(i,j)$: $[-G_{ij}c_{ii} + G_{ij}c_{ij} -B_{ij}s_{ij}]^2+ [B_{ij}c_{ii} -  B_{ij}c_{ij} - G_{ij}s_{ij}]^2  $
\item
Angle difference on line $(i,j)$: $\theta_i - \theta_j$ or $\atan2({s_{ij}},{c_{ij}})$, see equation \eqref{angle diff theta cs} for details
\end{enumerate}
We omit such constraints for the brevity of discussion. However, last two constraints are included in our computational experiments whenever explicit bounds are given.

\section{Comparison of Convex Relaxations}
\label{Comparison Radial}

In this section, we first present six different convex relaxations of the OPF problem. In particular, we consider the McCormick, SOCP, and SDP relaxations of both the rectangular formulation \eqref{rect} and the alternative formulation \eqref{SOCP}. Then, we analyze their relative strength by comparing their feasible regions. This comparison is an important motivator for the approach we take in the rest of the paper to generate strong SOCP relaxations. We discuss this in Section \ref{sec:choose}.

\subsection{Standard Convex Relaxations}

\subsubsection{McCormick Relaxation of Rectangular Formulation ($\mathcal{R}_{M}$).} 
As shown in \cite{mccormick}, the convex hull of the set $\{(x,y,w) : w=xy, \; (x,y) \in [\underline x, \overline x]\times[\underline y, \overline y]\}$ is given by
\begin{equation*}
\left\{ (x,y,w) :  \max\{\underline y x + \underline x y - \underline x \underline y, \overline y x + \overline x y - \overline x \overline y \} \le w \le \min\{\underline y x + \overline x y - \overline x \underline y, \overline y x + \underline x y - \underline x \overline y \} \right\},
\end{equation*}
which we denote as $M(w=xy)$. We use this result to construct McCormick envelopes for the quadratic terms in the rectangular formulation \eqref{rect}. In particular, let us first define the following new variables for each edge $(i,j)\in\mathcal{L}$: $E_{ij}=e_ie_j$, 
$F_{ij}=f_if_j$, 
$H_{ij}=e_if_j$, and for each bus $i\in\mathcal{B}$: $E_{ii}=e_i^2$, $F_{ii}=f_i^2$. Consider the following set of constraints:
%
\begin{subequations} \label{rect McCormick}
\begin{align}
&\hspace{0.5em} p_i^g-p_i^d = G_{ii}(E_{ii}+F_{ii}) + \sum_{j \in \delta(i)}[ G_{ij}(E_{ij}+F_{ij}) -B_{ij}(H_{ij}-H_{ji})]   & i& \in \mathcal{B}  \label{eq:RMp} \\
  & \hspace{0.5em} q_i^g-q_i^d = -B_{ii}(E_{ii}+F_{ii}) + \sum_{j \in \delta(i)}[ -B_{ij}(E_{ij}+F_{ij}) -G_{ij}(H_{ij}-H_{ji})]  & i& \in \mathcal{B} \label{eq:RMq}\\
  & \hspace{0.5em} \underline V_i ^2 \le E_{ii}+F_{ii} \le \overline V_i^2    & i& \in \mathcal{B} \label{eq:E+Fbounds} \\
  & \hspace{0.5em} -\overline V_i   \le e_i, f_i \le \overline V_i     & i& \in \mathcal{B}  \label{eq:efbounds} \\
  & \hspace{0.5em}   M(E_{ij}=e_i e_j), M(F_{ij}=f_i f_j) , M(H_{ij}=e_i f_j) &(&i,j) \in \mathcal{L} \label{eq:MEef1}\\
  & \hspace{0.5em}   M(E_{ii}=e_i^2) , M(F_{ii}=f_i^2), \; E_{ii}, F_{ii} \ge 0  & i& \in \mathcal{B}, \label{eq:MEef2}
\end{align}
\end{subequations}
where the McCormick envelops in \eqref{eq:MEef1}-\eqref{eq:MEef2} are constructed using the bounds given in \eqref{eq:efbounds}. Using these McCormick envelopes, we obtain a convex relaxation of the rectangular formulation \eqref{rect} with the feasible region denoted as 
\begin{align}\label{eq:RM}
\mathcal{R}_M = \{ (p, q, e, f, E, F, H) : (\ref{rect McCormick}), \eqref{activeAtGenerator}, \eqref{reactiveAtGenerator} \}.
\end{align} 
Note that this feasible region is a polyhedron. If the objective function $C_i(p_i^g)$ is linear, then we have a linear programming relaxation of the OPF problem.

\subsubsection{McCormick Relaxation of Alternative Formulation ($\mathcal{A}_{M}$).} Using the similar technique on the alternative formulation \eqref{SOCP}, we define new variables
$C_{ij} = c_{ij}^2$, $S_{ij} = s_{ij}^2$, $D_{ij} = c_{ii}c_{jj}$ for 
each edge $(i,j)\in\mathcal{L}$, and consider the following set of constraints:
\begin{subequations} \label{alt McCormick}
\begin{align}
  & \hspace{0.5em}  C_{ij}+S_{ij}  = D_{ij}     &(&i,j) \in \mathcal{L} \label{equal socp} \\
  & \hspace{0.5em} -\overline V_i\overline V_j \le  c_{ij}, s_{ij} \le \overline V_i\overline V_j     &(&i,j) \in \mathcal{L} \label{alt mccormickbnd}\\   
  & \hspace{0.5em}   M(C_{ij}=c_{ij}^2), M(S_{ij}=s_{ij}^2), M(D_{ij}=c_{ii}c_{jj}), C_{ij}, S_{ij} \ge 0 &(&i,j) \in \mathcal{L}, \label{alt mccormick cone}
\end{align}
\end{subequations}
where the McCormick envelops in \eqref{alt mccormick cone} are constructed using the bounds given in \eqref{alt mccormickbnd} and \eqref{voltageAtBusR}. 
Denote the feasible region of the corresponding convex relaxation as 
\begin{align} \label{eq:AM}
\mathcal{A}_M = \{ (p, q, c, s, C, S, D) : (\ref{alt McCormick}), \eqref{activeAtBusR}- \eqref{cosine_sine}, \eqref{activeAtGenerator},   \eqref{reactiveAtGenerator}\}.
\end{align} 
Again, $\mathcal{A}_M$ is a polyhedron.

\subsubsection{SDP Relaxations of Rectangular Formulation ($\mathcal{R}_{SDP}, \mathcal{R}^c_{SDP}, \mathcal{R}^r_{SDP}$).} To apply SDP relaxation to the rectangular formulation \eqref{rect}, define a hermitian matrix $X\in\mathbb{C}^{|\mathcal{B}|\times|\mathcal{B}|}$, i.e., $X=X^*$, where $X^*$ is the conjugate transpose of $X$. Consider the following set of constraints:
\begin{subequations} \label{rect sdp}
\begin{align}
 &\hspace{0.5em} p_i^g-p_i^d = G_{ii}X_{ii} + \sum_{j \in \delta(i)}[ G_{ij} \Re(X_{ij}) +B_{ij} \Im(X_{ij})]   & i& \in \mathcal{B} \label{rect sdp first}  \\
  & \hspace{0.5em} q_i^g-q_i^d = -B_{ii}X_{ii} + \sum_{j \in \delta(i)}[ -B_{ij}\Re(X_{ij}) +G_{ij}\Im(X_{ij})]  & i& \in \mathcal{B} \label{rect sdp second} \\
  & \hspace{0.5em}  \underline V_i^2 \le X_{ii} \le \overline V_i^2   & i& \in \mathcal{B}\label{rect sdp last} \\
  & \hspace{0.5em} X \textup{ is hermitian,} \ X \succeq  0,   \label{psd cons} 
\end{align}
\end{subequations}
where $\Re(x)$ and $\Im(x)$ are the real and imaginary parts of the complex number $x$, respectively. Let $V$ denote the vector of voltage phasors with the $i$-th entry $V_i =e_i+\mathrm{i}f_i$ for each bus $i\in\mathcal{B}$. If $X=VV^*$, then rank$(\Re(X))$ and rank$(\Im(X))$ are both equal to 2, and \eqref{rect sdp first}-\eqref{rect sdp last} exactly recovers \eqref{activeAtBus}-\eqref{voltageAtBus}. By ignoring the rank constraints, we come to the standard SDP relaxation of the rectangular formulation \eqref{rect}, whose feasible region is defined as 
\begin{align}\label{eq:RSDPcomplex}
\mathcal{R}^c_{SDP} = \{(p, q, W) : \eqref{rect sdp}, \eqref{activeAtGenerator},   \eqref{reactiveAtGenerator} \}.
\end{align}
This SDP relaxation is in the complex domain. There is also an SDP relaxation in the real domain by defining $W\in\mathbb{R}^{2|C|\times 2|C|}$. The associated constraints are
 \begin{subequations}\label{rect sdp real}
 \begin{align}
	 &\hspace{0.5em} p_i^g-p_i^d = G_{ii}(W_{ii}+W_{i'i'}) + \sum_{j \in \delta(i)}[ G_{ij} (W_{ij} + W_{i'j'})  - B_{ij} (W_{ij'}-W_{ji'})]   & i& \in \mathcal{B} \label{rect sdp real first}  \\
	 & \hspace{0.5em} q_i^g-q_i^d = -B_{ii}(W_{ii}+W_{i'i'}) + \sum_{j \in \delta(i)}[ -B_{ij}(W_{ij} + W_{i'j'}) - G_{ij}(W_{ij'}-W_{ji'})]  & i& \in \mathcal{B} \label{rect sdp real second} \\
	 & \hspace{0.5em}  \underline V_i^2 \le W_{ii} + W_{i'i'}\le \overline V_i^2   & i& \in \mathcal{B}\label{rect sdp real last} \\
	 & \hspace{0.5em} W \succeq  0,   \label{real psd cons} 
 \end{align}
 \end{subequations}
where $i'=i+|\mathcal{B}|$ and $j'=j+|\mathcal{B}|$. If $W=[e; f][e^T, f^T]$, i.e. rank($W$)=1, then \eqref{rect sdp real first}-\eqref{rect sdp real last} exactly recovers \eqref{activeAtBus}-\eqref{voltageAtBus}. We denote the feasible region of this SDP relaxation in the real domain as 
\begin{align}\label{eq:RSDPreal}
R_{SDP}^{r}=\{(p,q,W) : \eqref{rect sdp real first}-\eqref{real psd cons}, \eqref{activeAtGenerator}-\eqref{reactiveAtGenerator}\}.
\end{align}
The SDP relaxation in the real domain is first proposed in \cite{Bai2008, Bai2009}, and \cite{Lavaei12}. The SDP relaxation in the complex domain is formulated in \cite{bose2011optimal} and \cite{Zhang2013} and is widely used in the literature now for its notational simplicity. These two SDP relaxations produce the same bound, since the solution of one can be used to derive a solution to the other with the same objective function value. See e.g., Section 3.3 in \cite{Taylorbook} for a formal proof.  Henceforth, we refer to the SDP relaxation as $\mathcal{R}_{SDP}$ that is $\mathcal{R}_{SDP} := \mathcal{R}^c_{SDP} = \mathcal{R}^r_{SDP}$, 
where the second equality (with some abuse of notation) is meant to imply that the two relaxations have the same projection in the space of the $p, q$ variables.


\subsubsection{SOCP Relaxation of Rectangular Formulation ($\mathcal{R}_{SOCP}$).}\label{sec:SOCP rect} We can further apply SOCP relaxation to the SDP constraint (\ref{psd cons}) by only imposing the following constraints on all the $2\times2$ submatrices of $X$ for each line $(i,j) \in \mathcal{L}$,
\begin{equation}\label{rect socp cone}
\begin{bmatrix}
X_{ii} & X_{ij} \\
X_{ji} & X_{jj}
\end{bmatrix} \succeq 0 \quad (i,j) \in \mathcal{L}. 
\end{equation}
This is a relaxation of \eqref{psd cons}, because \eqref{psd cons} requires all principal submatrices of $X$ be SDP (see e.g., \cite{Horn}). Note that \eqref{rect socp cone} has a $2\times 2$ hermitian matrix, i.e., $X_{ij}=X_{ji}^*$. Using the Sylvester criterion, \eqref{rect socp cone} is equivalent to $X_{ij}X_{ji}\leq X_{ii}X_{jj}$ and $X_{ii}, X_{jj}\geq 0$. The first inequality can be written as $\Re(X_{ij})^2 + \Im(X_{ij})^2+ \left(\frac{X_{ii}-X_{jj}}{2}\right)^2\leq \left(\frac{X_{ii}+X_{jj}}{2}\right)^2$, which is an SOCP constraint in the real domain. Thus, we have an SOCP relaxation of the rectangular formulation with the feasible region defined as 
\begin{align}\label{eq:RSOCP}
\mathcal{R}_{SOCP}= \{ (p, q, X) : \eqref{activeAtGenerator},   \eqref{reactiveAtGenerator}, (\ref{rect sdp first})-(\ref{rect sdp last}), (\ref{rect socp cone}) \}.
\end{align}
This SOCP relaxation is also proposed in the literature, see e.g., \cite{Madani}. 
In \cite{low2014convexi}, this relaxation is proven to be equivalent to the SOCP relaxation of DistFlow model proposed in \cite{baran1989_, baran1989}.

\subsubsection{SOCP Relaxation of Alternative Formulation ($\mathcal{A}^{*}_{SOCP}$).} The nonconvex coupling constraints \eqref{coupling} in the alternative formulation \eqref{SOCP} can be relaxed 
as follows,
\begin{align}
c_{ij}^2+s_{ij}^2  \le c_{ii}c_{jj}     \quad (i,j) \in \mathcal{L}.  \label{SOCP cone}
\end{align}
It is easy to see that \eqref{SOCP cone} can be rewritten as $c_{ij}^2+s_{ij}^2+ \left(\frac{c_{ii}-c_{jj}}{2}\right)^2 \leq \left(\frac{c_{ii}+c_{jj}}{2}\right)^2$, which represents a rotated SOCP cone in $\mathbb{R}^4$. Note that the SOCP cone \eqref{SOCP cone} is the convex hull of \eqref{coupling}. 
Using \eqref{SOCP cone}, we have an SOCP relaxation of the alternative formulation \eqref{SOCP}.
Denote the feasible region of this SOCP relaxation as 
\begin{align}\label{eq:ASOCP}
{\mathcal{A}^*_{SOCP}} = \{ (p, q, c, s) :  (\ref{SOCP cone}) ,   \eqref{activeAtBusR}- \eqref{cosine_sine},  \eqref{activeAtGenerator},   \eqref{reactiveAtGenerator}\}.
\end{align}
This is the classic SOCP relaxation first proposed in \cite{Jabr06}. 

\subsubsection{SDP Relaxation of Alternative Formulation ($\mathcal{A}_{SDP}$).}
We can also apply SDP relaxation to the nonconvex quadratic constraints \eqref{coupling} in the alternative formulation \eqref{SOCP}. In particular, define a vector  $z \in \mathbb{R}^{2|\mathcal{L}|+|\mathcal{B}|}$, of which the first ${2|\mathcal{L}|}$ components are indexed by the set of branches $(i,j)\in\mathcal{L}$, and the last ${|\mathcal{B}|}$ components are indexed by the set of buses $i\in\mathcal{B}$. Essentially, $z$ represents $((c_{ij})_{(i,j)\in\mathcal{L}}, (s_{ij})_{(i,j)\in\mathcal{L}}, (c_{ii})_{i\in\mathcal{B}})$. Then define a real matrix variable $Z$ to approximate $zz^T$ and consider the following set of constraints:
\begin{subequations} \label{alt sdp}
\begin{align}
  & \hspace{0.5em}  Z_{(ij),(ij)}+Z_{(i'j'),(i'j')} =  Z_{(ii),(jj)}    &(&i,j) \in \mathcal{L} \label{alt sdp quad} \\
  & \hspace{0.5em} Z \succeq  zz^T  \label{psd cone} \\
  & \hspace{0.5em} Z_{(ij),(ij)} \le (\overline V_i\overline V_j)^2, \; Z_{(i'j'),(i'j')}  \le (\overline V_i\overline V_j)^2 &(&i,j) \in \mathcal{L}  \label{diagonal line} \\
  & \hspace{0.5em} Z_{(ii),(ii)}   \le (\underline V_i^2 +  \overline V_i^2 )c_{ii} - (\underline V_i  \overline V_i )^2  &i& \in \mathcal{B},  \label{diagonal bus}
\end{align}
\end{subequations}
where $i'=i+|\mathcal{L}|$ and $j'=j+|\mathcal{L}|$.
Constraints \eqref{alt sdp quad} and \eqref{psd cone} are the usual SDP relaxation of \eqref{coupling}, and constraints (\ref{diagonal line}) and (\ref{diagonal bus}) are used to properly upper bound the diagonal elements of $Z$, where constraint \eqref{diagonal bus} follows from applying McCormick envelopes on the squared terms $c_{ii}^2$. In particular, if $z_{(ij)}$ is restricted to be between $[\underline{z}_{(ij)}, \overline{z}_{(ij)}]$, then the McCormick upper envelope for the diagonal element $Z_{(ij),(ij)}$ is given as $Z_{(ij),(ij)} \le (\underline z_{(ij)} + \overline z_{(ij)}) z_{(ij)} - \underline z_{(ij)}\overline z_{(ij)}.$
Denote the feasible region of this SDP relaxation of the alternative formulation \eqref{SOCP} as  
\begin{align} \label{eq:ASDP}
\mathcal{A}_{SDP} = \{ (p, q, c, s, Z) : (\ref{alt sdp}), \eqref{activeAtBusR}-\eqref{cosine_sine} ,  \eqref{activeAtGenerator},   \eqref{reactiveAtGenerator}\}.
\end{align} 
Note that this SDP relaxation of the alternative formulation is derived using standard techniques, but to the best of our knowledge, it has not been discussed in the literature.

\subsection{Comparison of Relaxations}
The main result of this section is Theorem \ref{all relaxations}, which presents the relative strength of the various convex relaxations introduced above. In order to compare relaxations, they must be in the same variable space. Also the objective function depends only on the value of the real powers. Therefore, we study the feasible regions of the above convex relaxations projected to the space of real and reactive powers, i.e. the $(p,q)$ space. We use `$\hat{\textup{ }}$' to denote this projection. For example, $\hat{\mathcal{R}}_{M}$ is the projection of $\mathcal{R}_{M}$ to the $(p,q)$ space.

\begin{thm}  \label{all relaxations} Let  $\mathcal{R}_{M}$, $\mathcal{A}_{M}$, $\mathcal{R}_{SDP}$, $\mathcal{R}_{SOCP}$, $\mathcal{A}^{*}_{SOCP}$, and ${\mathcal{A}}_{SDP}$ be the McCormick relaxation of the rectangular formulation \eqref{rect}, the McCormick relaxation of the alternate formulation \eqref{SOCP}, the SDP relaxation of the rectangular formulation \eqref{eq:RSDPcomplex}, the SOCP relaxation of the rectangular formulation \eqref{eq:RSOCP}, the SOCP relaxation of the alternative formulation \eqref{eq:ASOCP}, and the SDP relaxation of the alternative formulation  \eqref{eq:ASDP}, respectively. Then:
\begin{enumerate}
\item The following relationships hold between the feasible regions of the convex relaxations when projected onto the $(p,q)$ space:
\begin{align}\label{eq:inclusion}
\begin{split}
\hat{\mathcal{R}}_{SDP}  \subseteq \hat{\mathcal{R}}_{SOCP} & = \hat{\mathcal{A}}^*_{SOCP}  \subseteq \hat{\mathcal{A}}_{SDP}      \\
 &\quad \quad  \ \ \rotatebox{90}{$\supseteq$}    \\
  \hat{\mathcal{R}}_{M} \ \ &   \supseteq \quad \hat{\mathcal{A}}_{M}
\end{split}
\end{align}
\item All the inclusions in \eqref{eq:inclusion} can be strict.
\end{enumerate}
\end{thm}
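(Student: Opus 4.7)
The plan is to establish each inclusion in \eqref{eq:inclusion} by an explicit $(p,q)$-preserving lift from the smaller relaxation to the larger, and to handle the four strict inclusions via small example networks. Three of the inclusions are essentially immediate. For $\hat{\mathcal{R}}_{SDP} \subseteq \hat{\mathcal{R}}_{SOCP}$, every PSD matrix has PSD $2\times 2$ principal submatrices, so \eqref{psd cons} implies \eqref{rect socp cone} while the remaining constraints coincide. For $\hat{\mathcal{R}}_{SOCP} = \hat{\mathcal{A}}^*_{SOCP}$, use the bijection $X_{ii} = c_{ii}$, $X_{ij} = c_{ij} - \mathrm{i}\, s_{ij}$; then $|X_{ij}|^2 = c_{ij}^2 + s_{ij}^2$ and the real/imaginary parts of \eqref{rect sdp first}--\eqref{rect sdp second} line up termwise with \eqref{activeAtBusR}--\eqref{reactiveAtBusR}. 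For $\hat{\mathcal{A}}_M \subseteq \hat{\mathcal{R}}_M$, set $e_i = f_i = 0$ at every bus; with zero voltages the McCormick envelopes collapse to the full intervals $[0,\overline V_i^2]$ (for squares) and $[-\overline V_i \overline V_j, \overline V_i \overline V_j]$ (for bilinears), so one can directly take $E_{ii} = c_{ii}$, $F_{ii} = 0$, $E_{ij} = c_{ij}$, $F_{ij} = 0$, $H_{ij} = s_{ij}/2$, $H_{ji} = -s_{ij}/2$, and the power-balance equations transfer verbatim.

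The two remaining inclusions share the same slack $\epsilon_{ij} := c_{ii}c_{jj} - c_{ij}^2 - s_{ij}^2 \ge 0$ provided by the rotated-cone constraint \eqref{SOCP cone}. For $\hat{\mathcal{A}}^*_{SOCP} \subseteq \hat{\mathcal{A}}_M$, set $D_{ij} = c_{ii}c_{jj}$ and split the slack as $C_{ij} = c_{ij}^2 + \epsilon_{ij}/2$, $S_{ij} = s_{ij}^2 + \epsilon_{ij}/2$; the coupling \eqref{equal socp} holds by construction, $D_{ij}$ is a genuine bilinear product and therefore lies in its McCormick envelope, and the remaining envelope inequalities on $C_{ij}$ and $S_{ij}$ reduce to the identities $(|c_{ij}| \mp \overline V_i \overline V_j)^2 \ge 0$ for the tangent lower bounds and to $c_{ij}^2 - s_{ij}^2 + c_{ii}c_{jj} \le 2(\overline V_i \overline V_j)^2$ for the secant upper bound, both of which follow from the box bounds \eqref{alt mccormickbnd}. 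For $\hat{\mathcal{A}}^*_{SOCP} \subseteq \hat{\mathcal{A}}_{SDP}$, let $z$ collect $(c_{ij})_{(i,j)\in\mathcal{L}}$, $(s_{ij})_{(i,j)\in\mathcal{L}}$, and $(c_{ii})_{i\in\mathcal{B}}$, and define $Z := zz^T + \mathrm{diag}(d)$ where $d_{(ij)} = d_{(i'j')} = \epsilon_{ij}/2$ on each edge coordinate and $d_{(ii)} = 0$. Then $Z \succeq zz^T$ since $d \ge 0$, \eqref{alt sdp quad} holds because $c_{ij}^2 + s_{ij}^2 + \epsilon_{ij} = c_{ii}c_{jj} = Z_{(ii)(jj)}$, \eqref{diagonal line} reduces to $(c_{ij}^2 - s_{ij}^2 + c_{ii}c_{jj})/2 \le (\overline V_i \overline V_j)^2$, and \eqref{diagonal bus} collapses to the voltage bound $(c_{ii}-\underline V_i^2)(c_{ii}-\overline V_i^2) \le 0$, which is exactly \eqref{voltageAtBusR}.

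For the strictness part, I would exhibit for each of the four $\subseteq$ arrows a small two- or three-bus instance whose relaxation gap appears in exactly one place. Natural candidates are a three-bus cycle with nonzero susceptances to separate $\hat{\mathcal{R}}_{SDP}$ from $\hat{\mathcal{R}}_{SOCP}$ (the SDP enforces a cycle-consistency condition that the SOCP of the rectangular formulation cannot see), and single-line configurations with carefully tuned shunts and voltage bounds to separate $\hat{\mathcal{A}}^*_{SOCP}$ from each of $\hat{\mathcal{A}}_{SDP}$ and $\hat{\mathcal{A}}_M$, and $\hat{\mathcal{A}}_M$ from $\hat{\mathcal{R}}_M$. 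The main obstacle lies precisely in this last step: once the correct lifts are written down, each inclusion reduces to routine algebra, but engineering clean separating instances that isolate a single relaxation gap without accidentally collapsing the others, and then verifying that the gap is preserved after projection to the $(p,q)$ space, is where the real care is required.
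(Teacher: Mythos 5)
Your proof of part (1) is correct and follows essentially the same route as the paper: the containment $\hat{\mathcal{A}}_M \subseteq \hat{\mathcal{R}}_M$ via the zero-voltage lift (the paper uses $H_{ij}=s_{ij}$, $H_{ji}=0$ rather than your symmetric split, but both satisfy the collapsed box envelopes), the bijection $X_{ii}=c_{ii}$, $X_{ij}=c_{ij}\pm\mathrm{i}s_{ij}$ for $\hat{\mathcal{R}}_{SOCP}=\hat{\mathcal{A}}^*_{SOCP}$, the principal-submatrix argument for $\hat{\mathcal{R}}_{SDP}\subseteq\hat{\mathcal{R}}_{SOCP}$, and lifts into $\mathcal{A}_M$ and $\mathcal{A}_{SDP}$ that absorb the rotated-cone slack $\epsilon_{ij}=c_{ii}c_{jj}-c_{ij}^2-s_{ij}^2$. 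Your even split of $\epsilon_{ij}$ between the two relevant coordinates (in both the $\mathcal{A}_M$ and $\mathcal{A}_{SDP}$ lifts) differs cosmetically from the paper, which dumps the entire slack into a single coordinate ($S_{ij}=c_{ii}c_{jj}-c_{ij}^2$ in Proposition 3.2, and the $(ij),(ij)$ diagonal entry of $Z'$ in Proposition 3.4); both choices verify the envelope and diagonal-bound inequalities, and your verification of those inequalities is sound.

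The genuine gap is in part (2). You describe ``natural candidates'' for separating instances but construct and verify none of them, and you yourself identify this as ``where the real care is required.'' That is precisely the content the paper supplies and your proposal omits: for $\hat{\mathcal{A}}^*_{SOCP}\subset\hat{\mathcal{A}}_{SDP}$ the paper gives a fully explicit $2$-bus certificate, namely the point $(c_{12},s_{12},c_{11},c_{22})=(1,0.1,1,1)$, which violates $c_{12}^2+s_{12}^2\le c_{11}c_{22}$, together with a concrete $5\times5$ PSD completion of the moment matrix satisfying all of \eqref{alt sdp}; for the chain $\hat{\mathcal{A}}^*_{SOCP}\subset\hat{\mathcal{A}}_M\subset\hat{\mathcal{R}}_M$ it reports distinct optimality gaps ($52.69\%$, $69.13\%$, $89.46\%$) on a $9$-bus radial instance against a BARON global optimum; and for $\hat{\mathcal{R}}_{SDP}\subset\hat{\mathcal{R}}_{SOCP}$ it points to the meshed instance case6ww in Table~\ref{all socp socpa} (noting that for radial networks this inclusion is an equality, so a $3$-bus \emph{tree} would not work --- your instinct to use a cycle is right, but it must be checked). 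Without at least one worked certificate per strict inclusion, part (2) remains an unproven claim; in particular the $\hat{\mathcal{A}}^*_{SOCP}\subset\hat{\mathcal{A}}_{SDP}$ separation is the least obvious of the four (it says an SDP relaxation is \emph{strictly weaker} than an SOCP relaxation, against the usual intuition) and genuinely requires exhibiting a PSD completion as the paper does.
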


In Section \ref{sec:containments} we present the proof of part (1) of Theorem \ref{all relaxations}, and in Section \ref{sec:propercontainment} we present examples to verify part (2) of the theorem. 

\subsubsection{Pairwise comparison of relaxations}\label{sec:containments}
The proof of part (1) of Theorem \ref{all relaxations} is divided into Propositions \ref{R_M A_M}-\ref{A_SOCP A_SDP}. 


\begin{prop} \label{R_M A_M}
$\hat{\mathcal{R}}_M \supseteq \hat{\mathcal{A}}_M $.
\end{prop}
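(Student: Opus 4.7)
The plan is to show $\hat{\mathcal{A}}_M \subseteq \hat{\mathcal{R}}_M$ by a direct lift: given any point $(p, q, c, s, C, S, D) \in \mathcal{A}_M$, I will construct an $(e, f, E, F, H)$ so that $(p, q, e, f, E, F, H) \in \mathcal{R}_M$, keeping the $(p,q)$ coordinates fixed. Comparing \eqref{eq:RMp}--\eqref{eq:RMq} with \eqref{activeAtBusR}--\eqref{reactiveAtBusR}, the two balance equations coincide whenever
\[
E_{ii}+F_{ii}=c_{ii},\qquad E_{ij}+F_{ij}=c_{ij},\qquad H_{ij}-H_{ji}=s_{ij},
\]
so the whole task reduces to picking such a lift that also satisfies the bound box \eqref{eq:E+Fbounds}--\eqref{eq:efbounds}, the nonnegativity in \eqref{eq:MEef2}, and the McCormick envelopes in \eqref{eq:MEef1}--\eqref{eq:MEef2}.

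My candidate is the ``flat'' lift at the origin: set $e_i=f_i=0$ for every $i\in\mathcal{B}$, and then set $E_{ii}=c_{ii}$, $F_{ii}=0$, $E_{ij}=c_{ij}$, $F_{ij}=0$, and $H_{ij}=s_{ij}/2$ on every $(i,j)\in\mathcal{L}$ (using $s_{ji}=-s_{ij}$ from \eqref{cosine_sine} to make $H_{ij}-H_{ji}=s_{ij}$). By construction the three displayed identities hold, so $(p,q)$ is preserved. Bound \eqref{eq:efbounds} is trivial at zero, and \eqref{eq:E+Fbounds} becomes $\underline{V}_i^2\le c_{ii}\le\overline{V}_i^2$, which is exactly \eqref{voltageAtBusR} in $\mathcal{A}_M$.

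The only real work is verifying the McCormick envelopes at this assignment, and this is where I expect the main (but routine) calculation. A direct substitution into $M(E_{ij}=e_ie_j)$, $M(F_{ij}=f_if_j)$, and $M(H_{ij}=e_if_j)$ with $e_i=e_j=f_i=f_j=0$ collapses the four McCormick inequalities to the single box $|w|\le\overline{V}_i\overline{V}_j$; similarly $M(E_{ii}=e_i^2)$ and $M(F_{ii}=f_i^2)$ at $e_i=f_i=0$, together with the explicit $E_{ii},F_{ii}\ge 0$, collapse to $0\le w\le\overline{V}_i^2$. It then suffices to check that the assigned values live inside these boxes: $|c_{ij}|\le\overline{V}_i\overline{V}_j$ and $|s_{ij}|/2\le\overline{V}_i\overline{V}_j$ follow from \eqref{alt mccormickbnd}, while $0\le c_{ii}\le\overline{V}_i^2$ follows from \eqref{voltageAtBusR}. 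Assembling these verifications places the lifted point in $\mathcal{R}_M$ and completes the proof.

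There is no conceptual obstacle; the entire argument hinges on recognizing that the ``flat'' lift $e\equiv f\equiv 0$ sits at the center of the bound box, so the McCormick envelopes degenerate to their loosest interval form and can absorb whatever $(c,s)$ values the $\mathcal{A}_M$ point chose to carry.
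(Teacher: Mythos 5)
Your proposal is correct and follows essentially the same argument as the paper: both use the ``flat'' lift $e\equiv f\equiv 0$ so that the McCormick envelopes degenerate to box constraints absorbed by \eqref{alt mccormickbnd} and \eqref{voltageAtBusR}. The only cosmetic difference is your symmetric split $H_{ij}=s_{ij}/2$, $H_{ji}=-s_{ij}/2$ versus the paper's choice $H_{ij}=s_{ij}$, $H_{ji}=0$; both satisfy the required identities and bounds.
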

\begin{proof}
In order to prove this result, we want to show that for any given $(p,q,c,s,C,S,D)\in\mathcal{A}_M$, we can find $(e,f,E,F,H)$ such that $(p,q,e,f,E,F,H)$ is in $\mathcal{R}_M$. For this purpose, set $e_i=f_i=0$ for $i \in \mathcal{B}$, $E_{ij}=c_{ij}$, $F_{ij}$=0, $H_{ij}=s_{ij}$, $H_{ji}=0$ for each $(i,j)\in\mathcal{L}$, and $E_{ii}=c_{ii}$ and $F_{ii}=0$ for each $i\in\mathcal{B}$. By this construction, we have $E_{ii} + F_{ii} = c_{ii}$, $E_{ij} + F_{ij} = c_{ij}$, and $H_{ij} - H_{ji} = s_{ij}$. Therefore, \eqref{activeAtBusR}-\eqref{reactiveAtBusR} implies that the constructed $E,F,H$ satisfy \eqref{eq:RMp}-\eqref{eq:RMq}; \eqref{voltageAtBusR} implies \eqref{eq:E+Fbounds}; \eqref{eq:efbounds} is trivially satisfied since $e_i=f_i=0$. Now to see the McCormick envelopes \eqref{eq:MEef1}-\eqref{eq:MEef2} are satisfied, consider $M(E_{ij}=e_ie_j)$. Using the bounds \eqref{eq:efbounds} on $e_i,f_i$,  $M(E_{ij}=e_ie_j)$ can be written as
\begin{align*}
\max\{-\overline{V}_ie_j-\overline{V}_je_i-\overline{V}_i\overline{V}_j,\; \overline{V}_ie_j +\overline{V}_je_i-\overline{V}_i\overline{V}_j\} \leq E_{ij} \\
E_{ij} \leq\min\{-\overline{V}_ie_j+\overline{V}_je_i+\overline{V}_i\overline{V}_j,\; \overline{V}_ie_j -\overline{V}_je_i+\overline{V}_i\overline{V}_j\}.		
\end{align*}
Since $e_i=0$ for all $i\in\mathcal{B}$, the above inequalities reduce to $-\overline{V}_i\overline{V}_j\leq E_{ij}\leq \overline{V}_i\overline{V}_j$, which is implied by $E_{ij}=c_{ij}$ and the bounds \eqref{alt mccormickbnd}. Similar reasoning can be applied to verify that the other McCormick envelopes in \eqref{eq:MEef1}-\eqref{eq:MEef2} are satisfied. Finally, it is straightforward to see that $E_{ii}  = c_{ii} \geq 0, F_{ii} = 0$. Therefore, the constructed $(p,q,e,f,E,F,H)$ is in $\mathcal{R}_M$. 
%
\end{proof}
In fact, the above argument only relies on constraints \eqref{activeAtBusR}-\eqref{voltageAtBusR} and \eqref{alt mccormickbnd} in $\mathcal{A}_M$. This suggests that $\hat{\mathcal{A}}_M$ may be strictly contained in $\hat{\mathcal{R}}_M$, which is indeed the case shown in Section \ref{sec:propercontainment}. 

\begin{prop} \label{A_M A_S} 
$\hat{\mathcal{A}}_M \supseteq \hat{\mathcal{A}}^*_{SOCP}$.
\end{prop}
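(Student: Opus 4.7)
The plan is to show that any $(p,q,c,s) \in \mathcal{A}^*_{SOCP}$ can be extended to a point $(p,q,c,s,C,S,D) \in \mathcal{A}_M$ by constructing appropriate auxiliary variables. Since the bus-balance constraints \eqref{activeAtBusR}-\eqref{reactiveAtBusR}, the bounds \eqref{voltageAtBusR}, the symmetry constraints \eqref{cosine_sine}, and the generator limits \eqref{activeAtGenerator}-\eqref{reactiveAtGenerator} appear identically in both relaxations, the entire work is concentrated in defining $(C,S,D)$ consistent with \eqref{equal socp}, the bounds \eqref{alt mccormickbnd}, and the McCormick envelopes \eqref{alt mccormick cone}.

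My proposed construction is the natural one that exploits slack in the SOCP inequality: set
\[
D_{ij} = c_{ii}c_{jj}, \quad S_{ij} = s_{ij}^2, \quad C_{ij} = D_{ij} - S_{ij} = c_{ii}c_{jj} - s_{ij}^2 \qquad (i,j)\in\mathcal{L}.
\]
The coupling \eqref{equal socp} is immediate from the definition of $C_{ij}$. The bounds \eqref{alt mccormickbnd} follow because $c_{ij}^2 + s_{ij}^2 \le c_{ii}c_{jj} \le \overline{V}_i^2\overline{V}_j^2$ implies $|c_{ij}|, |s_{ij}| \le \overline V_i \overline V_j$. Nonnegativity $S_{ij} \ge 0$ is trivial, and $C_{ij} \ge c_{ij}^2 \ge 0$ again uses the SOCP cone \eqref{SOCP cone}.

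For the McCormick envelopes \eqref{alt mccormick cone}, the envelopes $M(S_{ij}=s_{ij}^2)$ and $M(D_{ij}=c_{ii}c_{jj})$ are satisfied automatically because we are assigning the exact squared/product values, and $s_{ij}$, $c_{ii}$, $c_{jj}$ already lie in the intervals used to build those envelopes. The crux is $M(C_{ij}=c_{ij}^2)$: here the chosen value $c_{ii}c_{jj} - s_{ij}^2$ is generally not equal to $c_{ij}^2$, so I must verify the four McCormick inequalities directly. The upper envelope reduces to $c_{ii}c_{jj}-s_{ij}^2 \le (\overline V_i \overline V_j)^2$, which follows from \eqref{voltageAtBusR} and $s_{ij}^2 \ge 0$. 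The two lower envelopes have the form $C_{ij} \ge \pm 2\overline V_i\overline V_j\, c_{ij} - (\overline V_i\overline V_j)^2$; since these are under-estimators of $c_{ij}^2$ (they equal $c_{ij}^2 - (c_{ij} \mp \overline V_i \overline V_j)^2$), and since the SOCP constraint \eqref{SOCP cone} gives $C_{ij} = c_{ii}c_{jj}-s_{ij}^2 \ge c_{ij}^2$, both inequalities hold.

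The main obstacle — and the only nontrivial step — is this last verification: producing a $C_{ij}$ that lies in the McCormick envelope of $c_{ij}^2$ without having the value of $c_{ij}^2$ available in a tight way. The key insight is that the SOCP cone constraint gives us exactly the slack needed: $c_{ii}c_{jj} - s_{ij}^2$ dominates $c_{ij}^2$, and since the McCormick lower envelopes for $c_{ij}^2$ are themselves below $c_{ij}^2$, dominating $c_{ij}^2$ automatically dominates them. This is why the proposition crucially uses \eqref{SOCP cone} rather than just the box constraints.
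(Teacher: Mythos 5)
Your proof is correct and is essentially the paper's argument with the roles of $C_{ij}$ and $S_{ij}$ interchanged: the paper sets $C_{ij}=c_{ij}^2$ exactly and places the slack $c_{ii}c_{jj}-c_{ij}^2\ge s_{ij}^2$ in $S_{ij}$, then verifies $M(S_{ij}=s_{ij}^2)$ by the same two observations you use (the slack dominates the true square, which in turn dominates the McCormick lower envelopes $2\overline V_i\overline V_j|{\cdot}|-(\overline V_i\overline V_j)^2$, while the upper envelope follows from the voltage bounds). The symmetric choice changes nothing substantive, so your verification of $M(C_{ij}=c_{ij}^2)$ is the exact mirror of the paper's inequality \eqref{last ineq}.
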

\begin{proof}
It suffices to prove that $\text{proj}_{p, q, c, s} \mathcal{A}_M \supseteq \mathcal{A}^*_{SOCP}$. For this purpose, we want to show that for each $(p, q, c, s) \in \mathcal{A}^*_{SOCP} $, there exists $(C,S,D)$ such that $(p, q, c, s, C,S,D) \in \mathcal{A}_M$. In particular, set $C_{ij} = c_{ij}^2 $, $S_{ij} = c_{ii}c_{jj} -c_{ij}^2 \geq s_{ij}^2$, and 
$D_{ij} = c_{ii}c_{jj}$  for each $(i,j) \in \mathcal{L}$.
Note that $C_{ij}$ and $D_{ij}$ satisfy constraints (\ref{alt mccormick cone}) by the definition of McCormick envelopes. So, it remains to verify if $S_{ij}$ satisfies $M(S_{ij}=s_{ij}^2)$. 
This involves verifying:
\begin{align}\label{last ineq}
 2(\overline V_j \overline V_i)|s_{ij}| - (\overline V_j \overline V_i)^2 \le  S_{ij} \le   (\overline V_j \overline V_i)^2.
\end{align}
The first inequality holds because $S_{ij}\geq s_{ij}^2$ and $s_{ij}^2-2(\overline V_j \overline V_i)|s_{ij}|+(\overline V_j \overline V_i)^2=(|s_{ij}|-\overline V_j \overline V_i)^2\geq 0$. The second inequality holds since
$
S_{ij} =  c_{ii}c_{jj} -c_{ij}^2 \le  c_{ii}c_{jj} \le (\overline V_i \overline V_j)^2.
$
Thus, the result follows.
\end{proof}

The next result is straightforward, however we present it for completeness.
\begin{prop}  \label{A_SOCP R_SOCP}
$\hat{\mathcal{A}}^*_{SOCP}=\hat{\mathcal{R}}_{SOCP}$.
\end{prop}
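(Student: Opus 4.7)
The plan is to exhibit an explicit variable-by-variable correspondence between feasible points of $\mathcal{R}_{SOCP}$ and $\mathcal{A}^*_{SOCP}$ that preserves the $(p,q)$ coordinates, so that the two sets project to the same subset of $(p,q)$-space. Because the objective only sees $(p,q)$, this will force the two projections to coincide.

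Concretely, given $(p,q,X)\in\mathcal{R}_{SOCP}$, define $c_{ii}=X_{ii}$ for $i\in\mathcal{B}$ and, for $(i,j)\in\mathcal{L}$, set $c_{ij}=\Re(X_{ij})$ and $s_{ij}=-\Im(X_{ij})$. Since $X$ is hermitian, one has $X_{ji}=\overline{X_{ij}}$, so $c_{ji}=\Re(X_{ji})=\Re(X_{ij})=c_{ij}$ and $s_{ji}=-\Im(X_{ji})=\Im(X_{ij})=-s_{ij}$, giving \eqref{cosine_sine}. The voltage bounds \eqref{rect sdp last} become \eqref{voltageAtBusR}. A direct substitution shows that the active and reactive power balance equations \eqref{rect sdp first}--\eqref{rect sdp second} coincide with \eqref{activeAtBusR}--\eqref{reactiveAtBusR}, where one uses the sign convention $s_{ij}=-\Im(X_{ij})$ (this is precisely what makes the $B_{ij}$-coefficients in the two formulations match). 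Finally, the $2{\times}2$ PSD constraint \eqref{rect socp cone} is equivalent to $X_{ii},X_{jj}\ge 0$ together with $|X_{ij}|^2=\Re(X_{ij})^2+\Im(X_{ij})^2\le X_{ii}X_{jj}$, which under our change of variables is exactly \eqref{SOCP cone}. Thus the constructed $(p,q,c,s)$ lies in $\mathcal{A}^*_{SOCP}$.

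Conversely, given $(p,q,c,s)\in\mathcal{A}^*_{SOCP}$, set $X_{ii}=c_{ii}$ for $i\in\mathcal{B}$ and $X_{ij}=c_{ij}-\mathrm{i}s_{ij}$ for $(i,j)\in\mathcal{L}$ (with $X_{ji}=\overline{X_{ij}}=c_{ij}+\mathrm{i}s_{ij}=c_{ji}-\mathrm{i}s_{ji}$, which is consistent with \eqref{cosine_sine}). Off-diagonal entries corresponding to non-adjacent pairs may be set to zero; the principal $2\times 2$ blocks over edges satisfy \eqref{rect socp cone} because of \eqref{SOCP cone} and $c_{ii},c_{jj}\ge\underline V_i^2,\underline V_j^2\ge 0$. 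The power-flow equations and voltage bounds translate back in the same way, and the resulting $X$ is hermitian by construction. Thus $(p,q,X)\in\mathcal{R}_{SOCP}$.

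Since both maps preserve the $(p,q)$ coordinates, $\hat{\mathcal{R}}_{SOCP}=\hat{\mathcal{A}}^*_{SOCP}$. The only delicate step is verifying that the sign convention $s_{ij}=-\Im(X_{ij})$ makes the $B_{ij}$-terms in the two balance equations agree; everything else is a routine change of variables. No other subtleties arise because the $2\times 2$ PSD constraint on a hermitian block is well known to be equivalent to a single rotated SOCP inequality together with nonnegativity of the diagonal.
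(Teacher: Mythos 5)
Your proof is correct and takes essentially the same route as the paper: both establish the explicit bijection $X_{ii}=c_{ii}$, $X_{ij}=c_{ij}\mp\mathrm{i}s_{ij}$ and observe that the $2\times 2$ hermitian PSD blocks \eqref{rect socp cone} are exactly the rotated cones \eqref{SOCP cone}. Your sign convention $s_{ij}=-\Im(X_{ij})$ is in fact the one consistent with $X=VV^*$ and makes the $B_{ij}$-terms in \eqref{rect sdp first}--\eqref{rect sdp second} match \eqref{activeAtBusR}--\eqref{reactiveAtBusR}, so your added care on that point is warranted.
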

\begin{proof}
Note that $X_{ii} = c_{ii}$, $X_{jj} = c_{jj}$ and $X_{ij} = c_{ij} + \mathrm{i}s_{ij}$ forms a bijection between the $(c,s)$ variables in $\mathcal{A}^*_{SOCP}$ and the $X$ variable in $\mathcal{R}_{SOCP}$. Given this bijection, \eqref{activeAtBus}-\eqref{voltageAtBus} are equivalent to \eqref{rect sdp first}-\eqref{rect sdp last} and \eqref{cosine_sine} is equivalent to $X_{ij}=X_{ji}^*$. As mentioned in Section \ref{sec:SOCP rect}, the hermitian condition in \eqref{rect socp cone} is equivalent to $|X_{ij}|^2\leq X_{ii}X_{jj}$ and $X_{ii}, X_{jj}\geq 0$, which is exactly \eqref{SOCP cone}. 
\end{proof}


\begin{prop}  \label{A_SOCP A_SDP}
$\hat{\mathcal{A}}^*_{SOCP}  \subseteq \hat{\mathcal{A}}_{SDP}$. 
\end{prop}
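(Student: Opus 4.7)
The plan is to show $\hat{\mathcal{A}}^*_{SOCP} \subseteq \hat{\mathcal{A}}_{SDP}$ by a direct construction: given any $(p,q,c,s) \in \mathcal{A}^*_{SOCP}$, I will exhibit a matrix $Z$ such that $(p,q,c,s,Z) \in \mathcal{A}_{SDP}$. Since the power-balance, voltage, and generation constraints \eqref{activeAtBusR}--\eqref{cosine_sine}, \eqref{activeAtGenerator}, \eqref{reactiveAtGenerator} already appear in the definition of $\mathcal{A}^*_{SOCP}$, all that needs to be checked is the SDP-specific block \eqref{alt sdp}.

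The naive guess $Z = zz^T$ with $z = ((c_{ij}),(s_{ij}),(c_{ii}))$ fails, because \eqref{alt sdp quad} would then become $c_{ij}^2 + s_{ij}^2 = c_{ii}c_{jj}$, which holds only with inequality $\le$ in $\mathcal{A}^*_{SOCP}$. The fix is to define
\begin{equation*}
Z := zz^T + D,
\end{equation*}
where $D$ is a nonnegative diagonal matrix used to absorb the SOCP slack. Concretely, set the $(ii),(ii)$ diagonal entries of $D$ to $0$, set the $(i'j'),(i'j')$ entries of $D$ to $0$, and set the $(ij),(ij)$ entries of $D$ to $\alpha_{ij} := c_{ii}c_{jj} - c_{ij}^2 - s_{ij}^2$, which is nonnegative precisely by the SOCP constraint \eqref{SOCP cone}. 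Then $D \succeq 0$, so $Z \succeq zz^T$, giving \eqref{psd cone} immediately.

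The remaining verifications are routine. For \eqref{alt sdp quad}, adding the $(ij),(ij)$ and $(i'j'),(i'j')$ diagonal entries gives $c_{ij}^2 + \alpha_{ij} + s_{ij}^2 = c_{ii}c_{jj} = Z_{(ii),(jj)}$. For \eqref{diagonal line}, observe $Z_{(ij),(ij)} = c_{ii}c_{jj} - s_{ij}^2 \le c_{ii}c_{jj} \le \overline V_i^2 \overline V_j^2$ and $Z_{(i'j'),(i'j')} = s_{ij}^2 \le c_{ii}c_{jj} \le \overline V_i^2 \overline V_j^2$, using \eqref{voltageAtBusR} and \eqref{SOCP cone}. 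For \eqref{diagonal bus}, note $Z_{(ii),(ii)} = c_{ii}^2$ and the required inequality $c_{ii}^2 \le (\underline V_i^2 + \overline V_i^2)c_{ii} - (\underline V_i \overline V_i)^2$ rearranges to $(c_{ii} - \underline V_i^2)(c_{ii} - \overline V_i^2) \le 0$, which is exactly the voltage bound \eqref{voltageAtBusR}.

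There is no serious obstacle here; the only conceptual step is recognizing that the SOCP-to-SDP gap lives entirely in the diagonal equality \eqref{alt sdp quad} and can be closed by a rank-one lift augmented with diagonal slack. Once that is in hand, the bound constraints \eqref{diagonal line}--\eqref{diagonal bus} fall out of the voltage bounds and the SOCP inequality themselves.
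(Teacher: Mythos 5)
Your proposal is correct and follows essentially the same route as the paper: the paper also sets $Z = zz^T + Z'$ with a nonnegative diagonal slack $c_{ii}c_{jj} - (c_{ij}^2 + s_{ij}^2)$ placed on the $(ij),(ij)$ entries, then verifies \eqref{alt sdp quad}, \eqref{psd cone}, \eqref{diagonal line}, and \eqref{diagonal bus} exactly as you do. Your explicit rearrangement of \eqref{diagonal bus} into $(c_{ii}-\underline V_i^2)(c_{ii}-\overline V_i^2)\le 0$ is a slightly more detailed rendering of the paper's remark that this constraint is just the McCormick envelope of $c_{ii}^2$ under the bounds \eqref{voltageAtBusR}.
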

\begin{proof}
It suffices to show that $\mathcal{A}^*_{SOCP}  \subseteq \text{proj}_{p, q, c, s} \mathcal{A}_{SDP}$. For this, we can show that given any $(p, q, c, s) \in \mathcal{A}^*_{SOCP}$, there exists a $Z$ such that $(p, q, c, s, Z) \in \mathcal{A}_{SDP}$. In particular, we construct $Z = zz^T + Z'$, where $z=((c_{ij})_{(i,j)\in\mathcal{L}}, (s_{ij})_{(i,j)\in\mathcal{L}},(c_{ii})_{i\in\mathcal{B}})$, and $Z'$ is a diagonal matrix with the first $|\mathcal{L}|$ entries on the diagonal equal to $c_{ii}c_{jj}-(c_{ij}^2+s_{ij}^2)$ for each $(i,j)\in\mathcal{L}$ and all other entries equal to zero. By this construction, \eqref{alt sdp quad} is satisfied. Also $Z\succeq zz^T$, i.e. \eqref{psd cone} is satisfied. Note that the first $|\mathcal{L}|$ entries on the diagonal of $Z$ is equal to $c_{ii}c_{jj}-s_{ij}^2$. Therefore, the bounds in \eqref{diagonal line} are equivalent to $c_{ii}c_{jj}-s_{ij}^2\leq \overline{V}_i^2\overline{V}_j^2$, $s_{ij}^2\leq \overline{V}_i^2\overline{V}_j^2$, where the first one follows from \eqref{voltageAtBusR} and the second one follows from \eqref{voltageAtBusR} and \eqref{SOCP cone}.
Finally, constraint \eqref{diagonal bus} is the McCormick envelope of $c_{ii}^2$ with the same bounds in \eqref{voltageAtBusR}, therefore it is also satisfied by the solution of $\mathcal{A}^*_{SOCP}$.  
 \end{proof}


%

\subsubsection{Strictness of Inclusions.}\label{sec:propercontainment}
Now, we prove by examples that all the inclusions in Theorem  \ref{all relaxations} can be strict.
\begin{enumerate}
\item $\hat{\mathcal{A}}^*_{SOCP} \subset \hat{\mathcal{A}}_{M} \subset  \hat{\mathcal{R}}_{M}$: Consider the 9-bus radial network with quadratic objective function from   \cite{kocuk2014}. We compare the strength of these three types of relaxations in Table~\ref{McCormick results}, which shows that the inclusions can be strict.
\begin{table}[H] 
\captionsetup{width=.78\textwidth}\caption{Percentage optimality gap for $\mathcal{R}_M$, $\mathcal{A}_M$, and $\mathcal{A}^*_{SOCP}$ with respect to global optimality found by BARON.} 
\label{McCormick results}
\begin{center}
\begin{tabular}{ccrrr}
\hline
    case       &  objective          &  $\mathcal{R}_M$ &  $\mathcal{A}_M$  &       $\mathcal{A}^*_{SOCP}$ \\
\hline
     9-bus &  quadratic &      89.46 &      69.13 &      52.69  \\
\hline
\end{tabular}  
\end{center}
\end{table}
{Here, the percentage optimality gap  is calculated as $ 100 \times \frac{z^{\text{*}} - z^{\text{LB}}}{z^{\text{*}}}$,
where $z^{\text{*}}$ is the global optimal objective function value found by BARON \citep{BARON}  and $z^{\text{LB}}$ is the optimal objective cost of a particular relaxation.
}

\item $\hat{\mathcal{A}}^*_{SOCP} \subset \hat{\mathcal{A}}_{SDP}$: Consider a 2-bus system. Since there is only one transmission line, $\mathcal{A}^*_{SOCP}$ has only one SOCP constraint	
\begin{equation} \label{2-bus socp}
c_{12}^2+s_{12}^2 \le c_{11} c_{22},
\end{equation}
while the SDP relaxation $\mathcal{A}_{SDP}$ has
\begin{equation}
Z \succeq zz^T \iff 
\begin{bmatrix} 
1 & c_{12} & s_{12} & c_{11} & c_{22} \\
 c_{12}  & Z_{11} & Z_{12} & Z_{13} & Z_{14} \\
 s_{12}  & Z_{21} & Z_{22} & Z_{23} & Z_{24} \\
 c_{11}  & Z_{31} & Z_{32} & Z_{33} & Z_{34} \\
 c_{22}  & Z_{41} & Z_{42} & Z_{43} & Z_{44}
\end{bmatrix}
\succeq 0
\end{equation}
with the additional constraints  $ Z_{11}+ Z_{22}= Z_{43}$, $Z_{11} \le 1.4641$, $Z_{22} \le 1.4641$, $Z_{33} \le 2.02c_{12}-0.9801$ and $Z_{44} \le 2.02s_{12}-0.9801$, assuming that $\underline V_1=\underline V_2 = 0.9$ and  $\overline V_1=\overline V_2 = 1.1$. 


Now, consider a point $(c_{12}, s_{12}, c_{11}, c_{22}) = (1.000,0.100,1.000,1.000)$, which clearly violates constraint (\ref{2-bus socp}). However, one can extend this point in SDP relaxation as follows:
\begin{equation}
\begin{bmatrix} 
1 & z^T \\
z & Z
\end{bmatrix}
=
\begin{bmatrix} 
1.000&  1.000 &  0.100   &1.000  & 1.000 \\
1.000&   1.006 &  0.100  &  0.997 &  0.997\\
0.100  & 0.100  &  0.017 &  0.100 &  0.100\\
1.000 &  0.997  & 0.100&   1.029 &  1.023\\
1.000 &  0.997&   0.100  & 1.023 &  1.029
\end{bmatrix}
\succeq 0
\end{equation}
This proves our claim that the SDP relaxation $\hat{\mathcal{A}}_{SDP}$  can be weaker than the SOCP relaxation $\hat{\mathcal{A}}^*_{SOCP}$. 

\item $\mathcal{R}_{SDP} \subset \mathcal{R}_{SOCP}$: Although this relation holds as equality for radial networks \citep{Sojoudi}, the inclusion can be strict for meshed networks. For example, Table \ref{all socp socpa} demonstrates this fact, e.g.,  the instance case6ww from the MATPOWER library \citep{Matpower}.

\end{enumerate}

\subsection{Our choice of convex relaxation}\label{sec:choose}

We discuss some consequences of Theorem \ref{all relaxations} here.  

Among LPs, SOCPs, and SDPs, the most tractable relaxation are LP relaxations. In a recent paper \cite{kocuk2014}, we showed how $\mathcal{A}_{M}$ may be used (together with specialized cutting planes) to solve tree instances of OPF globally. Theorem \ref{all relaxations} provides a theoretical basis for selection of $\mathcal{A}_{M}$ over $\mathcal{R}_M$ if one wishes to use linear programming relaxations. However, as seen in Theorem \ref{all relaxations}, both  McCormick relaxations are weaker than the SOCP relaxations. In the context of meshed systems, in our preliminary experiments, the difference in quality of bounds produced by the LP and SOCP based relaxations is quite significant. 

As stated in Section \ref{sec:intro}, the goal of this paper is to avoid using SDP relaxations. However, it is interesting to observe the relative strength of different SDP relaxations. On the one hand, $\mathcal{R}_{SDP}$ is the best relaxation among the relaxation considered in Theorem \ref{all relaxations}. On the other hand, quite remarkably, $\mathcal{A}_{SDP}$ is weaker than $\mathcal{A}^{*}_{SOCP}$. Clearly, if one chooses to use SDPs, $\mathcal{A}_{SDP}$ is not a good choice. One may define a different SOCP relaxation applied to the SDP relaxation of the alternative formulation by relaxing constraint (\ref{psd cone}) and replacing it with $2\times2$ principle submatrices. Such a relaxation would yield very poor bounds and thus undesirable.

We note that \cite{coffrin2014} and \cite{coffrin2015} show that a relaxation with same bound as $\mathcal{R}_{SOCP}$ is quite strong. The equality $\mathcal{R}_{SOCP} = \mathcal{A}^{*}_{SOCP}$ is a straightforward observation. However, between these two relaxations, working with the classic SOCP relaxation $\mathcal{A}^{*}_{SOCP}$ provides a very natural way to strengthen these SOCP relaxations. In particular, $\mathcal{A}^{*}_{SOCP}$ was obtained by first dropping the nonconvex arctangent constraint (\ref{arctan cons}). If one is able to incorporate LP/SOCPs based convex outer approximations of these constraints, then $\mathcal{A}^{*}_{SOCP}$ could be significantly strengthened. Indeed one may be even able to produce relaxations that are incomparable to $\mathcal{R}_{SDP}$. We show how to accomplish this in the next section.

\section{Strong SOCP Relaxation for Meshed Networks}
\label{section:meshed}


In this section, we propose three methods to strengthen the classic SOCP relaxation $\mathcal{A}^*_{SOCP}$. In Section \ref{deal cycles}, we propose a new  relaxation of the arctangent constraint of the arctangent constraints \eqref{arctan cons} as polynomial constraints over cycles in the power network. These polynomial constraints with degrees proportional to the length of the cycles are then transformed to systems of bilinear equations by triangulating the cycles. We apply  McCormick relaxation to the bilinear constraints. The resulting convex relaxation is incomparable to the standard SDP relaxation, i.e. the former does not dominate nor is dominated by the latter. In Section \ref{sec:arctan envelope}, we construct a polyhedral envelope for the arctangent functions in 3-dimension and incorporate it into the classic SOCP relaxation, which again results in a convex relaxation incomparable to the SDP relaxation. In Section \ref{SOCP SDP sep}, we strengthen the classic SOCP relaxation by dynamically generating valid linear inequalities that separate the SOCP solution from the SDP cones. This approach takes the advantage of the efficiency of the SOCP relaxation and the accuracy of the SDP relaxation. It very rapidly produces solutions of quality extremely close to the SDP relaxation. In Section \ref{sec:bounding}, we propose variable bounding techniques that provide tight variable bounds for the first two strengthening approaches.

\subsection{A New Cycle-based Relaxation of OPF}
\label{deal cycles}
Our first method is based on the following observation: instead of satisfying the angle condition \eqref{arctan cons} for each $(i,j) \in \mathcal{L}$, we consider a relaxation that guarantees angle differences sum up to 0 modulo $2\pi$ over every cycle $C$ in the power network, i.e.
\begin{equation}
\sum_{(i,j)\in C } \theta_{ij} = 2\pi k, \quad \text{ for some } k \in \mathbb{Z}, \label{angle = 0} 
\end{equation}
where $\theta_{ij}$ is the angle difference between adjacent buses $i$ and $j$.
Although the number of cycles in a power network may be large, it suffices to enforce \eqref{angle = 0} only over cycles in a cycle basis. For a formal definition of cycle basis, see e.g., \cite{Kavitha2009}. Since $\theta = 2\pi k \text{ for some } k \in \mathbb{Z} \iff \cos \theta = 1$, we can equivalently write  (\ref{angle = 0}) as follows:
\begin{equation}
\cos \biggl ( \sum_{(i,j)\in C } \theta_{ij}  \biggr ) = 1. \label{cos = 1} 
\end{equation}
We call \eqref{cos = 1} \emph{cycle constraints}. 
By expanding the cosine term appropriately, we can express \eqref{cos = 1} in terms of $\cos(\theta_{ij})'s$ and $\sin(\theta_{ij})$'s. According to the construction of the alternative formulation \eqref{SOCP}, we have the following relationship between $c,s$ and $\theta$
\begin{equation} \label{cos sine def}
\cos(\theta_{ij}) = \frac{c_{ij}}{\sqrt{c_{ii}c_{jj}}} \  \text{ and } \  \sin(\theta_{ij}) = -\frac{s_{ij}}{\sqrt{c_{ii}c_{jj}}} \quad \forall (i,j)\in\mathcal{L}. 
\end{equation}
Using \eqref{cos sine def}, the cycle constraint \eqref{cos = 1} can be reformulated as a degree $|C|$ homogeneous polynomial equality in the $c_{ii}$, $c_{ij}$, and $s_{ij}$ variables. Denote it as $p_{|C|}$. 

Unfortunately, directly solving the polynomial relaxation can be intractable, especially for large cycles, since $p_{|C|}$ can have up to $2^{|C|-1}+1$ monomials and each monomial of degree $|C|$. It is well known that any polynomial constraint can be written as a set of quadratic constraints using additional variables. In our case, there is a natural way to obtain a  $O(|C|)$ sized system of bilinear constraints by decomposing large cycles into smaller ones. Before introducing the cycle decomposition method, we present two building blocks in the construction, namely, the cycle constraints over 3- and 4-cycles.

\subsubsection{3-Cycle.} \label{sec:3 cycle} 
Let us first analyze the  simplest case, namely the cycle constraint over a 3-cycle. Expansion of $\cos (\theta_{12}+ \theta_{23} + \theta_{31} ) = 1$ gives $p_3=0$, where $p_3$ is the following cubic polynomial:
\begin{equation}
p_3=c_{12}(c_{23}c_{31}-s_{23}s_{31})-s_{12}(s_{23}c_{31}+c_{23}s_{31})-c_{11}c_{22} c_{33}. \label{cos sum 3}
\end{equation}
Note that if the entire power network is a 3-cycle, then adding \eqref{cos sum 3} to the alternative formulation \eqref{SOCP} makes it exactly equivalent to the rectangular formulation \eqref{rect}.
Now we claim that $p_3$ can be replaced with two bilinear constraints. To start with, let us define the polynomials
\begin{equation} 
p_{ij} = c_{ij}^2+s_{ij}^2-c_{ii}c_{jj}, \quad (i,j) \in \{(1,2) , (2,3), (3,1)\}
\end{equation}
and 
\begin{subequations} \label{bilinear 3cycle}
\begin{align}
q_3^1&=s_{12}c_{33}+ c_{23}s_{31}+s_{23}c_{31}  \\
q_3^2&=c_{12}c_{33}-  c_{23}c_{31}+s_{23}s_{31} .
\end{align}
\end{subequations}
Then, we have the following result.
\begin{prop} \label{prop:p3}
$ \{(c,s): p_3 = p_{12}=p_{23}=p_{31} = 0\} = \{(c,s):q_3^1=q_3^2=p_{12}=p_{23}=p_{31} = 0\}$.
\end{prop}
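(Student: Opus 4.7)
The plan is to establish the set equality via two explicit polynomial identities in the variables $c$ and $s$.

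For the inclusion ``$\supseteq$'', I will verify the identity
$$p_3 \;=\; c_{33}\, p_{12} \;-\; c_{12}\, q_3^2 \;-\; s_{12}\, q_3^1$$
by direct substitution: upon expanding the right-hand side, the $c_{12}^2 c_{33}$ and $s_{12}^2 c_{33}$ contributions cancel against the $c_{33}(c_{12}^2+s_{12}^2)$ piece of $c_{33}p_{12}$, leaving exactly the cross terms $-c_{11}c_{22}c_{33}+c_{12}(c_{23}c_{31}-s_{23}s_{31})-s_{12}(c_{23}s_{31}+s_{23}c_{31})$ that constitute $p_3$. Given this identity, any point with $p_{12}=q_3^1=q_3^2=0$ automatically satisfies $p_3=0$, regardless of the status of $p_{23}$ or $p_{31}$.

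For the inclusion ``$\subseteq$'', the key is to prove the Pythagorean-style identity
$$(q_3^1)^2 + (q_3^2)^2 + 2\, c_{33}\, p_3 \;=\; c_{33}^2\, p_{12} \;+\; c_{11}c_{33}\, p_{23} \;+\; c_{22}c_{33}\, p_{31} \;+\; p_{23}\, p_{31}.$$
My plan for deriving it is: expand $(q_3^1)^2+(q_3^2)^2$; use the clean factorization
$$(c_{23}s_{31}+s_{23}c_{31})^2+(c_{23}c_{31}-s_{23}s_{31})^2=(c_{23}^2+s_{23}^2)(c_{31}^2+s_{31}^2)$$
(the algebraic analogue of $\sin^2+\cos^2=1$ applied to the angle-addition formulas); recognize that the remaining cross terms equal $-2c_{33}(p_3+c_{11}c_{22}c_{33})$ by the definition of $p_3$; and then substitute $c_{12}^2+s_{12}^2=p_{12}+c_{11}c_{22}$, $c_{23}^2+s_{23}^2=p_{23}+c_{22}c_{33}$, $c_{31}^2+s_{31}^2=p_{31}+c_{11}c_{33}$ to collect the residue into the stated combination of the $p_{ij}$'s. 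Once the identity is in hand, vanishing of $p_{12},p_{23},p_{31},p_3$ forces $(q_3^1)^2+(q_3^2)^2=0$, which over the reals yields $q_3^1=q_3^2=0$.

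The main, though modest, obstacle is guessing the correct form of the second identity. A useful guiding principle is that on the subvariety $p_{12}=p_{23}=p_{31}=0$ with $c_{ii}>0$, the parametrization $c_{ij}=\sqrt{c_{ii}c_{jj}}\cos\theta_{ij}$, $s_{ij}=-\sqrt{c_{ii}c_{jj}}\sin\theta_{ij}$ turns $p_3$ into $c_{11}c_{22}c_{33}[\cos(\theta_{12}+\theta_{23}+\theta_{31})-1]$ and $(q_3^1)^2+(q_3^2)^2$ into $2c_{11}c_{22}c_{33}^2[1-\cos(\theta_{12}+\theta_{23}+\theta_{31})]$, immediately suggesting the relation $(q_3^1)^2+(q_3^2)^2+2c_{33}p_3\equiv 0$ modulo $p_{12},p_{23},p_{31}$, which one then lifts to a genuine polynomial identity by carefully tracking the correction terms arising from the three generators.
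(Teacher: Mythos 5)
Your proposal is correct; I verified both identities by direct expansion. The ``$\supseteq$'' identity $p_3 = c_{33}p_{12} - c_{12}q_3^2 - s_{12}q_3^1$ is essentially the paper's own argument for that inclusion (the paper substitutes $c_{23}c_{31}-s_{23}s_{31}=c_{12}c_{33}$ and $s_{23}c_{31}+c_{23}s_{31}=-s_{12}c_{33}$ into $p_3$, which is the same computation packaged as a substitution rather than an ideal-membership certificate). Where you genuinely depart from the paper is the ``$\subseteq$'' direction. The paper introduces angles $\theta_{ij}$ via $\cos\theta_{ij}=c_{ij}/\sqrt{c_{ii}c_{jj}}$, $\sin\theta_{ij}=-s_{ij}/\sqrt{c_{ii}c_{jj}}$, deduces from $p_3=0$ that $\theta_{12}+\theta_{23}+\theta_{31}\in 2\pi\mathbb{Z}$, and then expands the sine and cosine of this relation to recover $q_3^1=q_3^2=0$; this requires a small side discussion of why both the sine and cosine equations are needed to pin down the angle identity. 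You instead exhibit the polynomial identity $(q_3^1)^2+(q_3^2)^2+2c_{33}p_3=c_{33}^2p_{12}+c_{11}c_{33}p_{23}+c_{22}c_{33}p_{31}+p_{23}p_{31}$ (which checks out, using $(c_{23}s_{31}+s_{23}c_{31})^2+(c_{23}c_{31}-s_{23}s_{31})^2=(c_{23}^2+s_{23}^2)(c_{31}^2+s_{31}^2)$) and conclude from a sum of squares vanishing over the reals. Your route buys a purely algebraic proof that needs no trigonometric parametrization — in particular it does not implicitly assume $c_{ii}c_{jj}>0$ to define the angles, and it sidesteps the sine-versus-cosine branch analysis — at the cost of having to guess the certificate, which you motivate sensibly. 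The paper's route is more transparent about the physical meaning (the cycle angle condition) and generalizes more readily to the 4-cycle case treated in Proposition 3.3, but both are valid.
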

\begin{proof} We prove the above two sets are equal by showing they contain each other.
\begin{enumerate}
	\item ($\subseteq$): $p_{12}=p_{23}=p_{31} = 0$ ensures that there exist $\theta_{12}$, $\theta_{23}$, and $\theta_{31}$ such that the $c,s,\theta$ variables satisfy \eqref{cos sine def}. Then, using the fact that \eqref{cos sum 3} is derived using \eqref{cos sine def} and \eqref{cos = 1}, $p_3 = 0$ implies that 
	\begin{equation} \label{angle 3 def}
	\theta_{12} = - (\theta_{23} +  \theta_{31} ) + 2k\pi \quad\forall k\in\mathbb{Z},
	\end{equation}
which is equivalent to the following two equalities,
	\begin{align} \label{eq:3cycle-sincos}
		\sin(\theta_{12}) = \sin(-(\theta_{23}+\theta_{31}))\; \text{  and  } \; \cos(\theta_{12}) = \cos(-(\theta_{23}+\theta_{31})).
	\end{align}
	Note that the sine constraint in \eqref{eq:3cycle-sincos} implies that 
	\begin{align*}
		\theta_{12} = -(\theta_{23}+\theta_{31}) + 2k\pi \;\text{  or  }\; \theta_{12} = (\theta_{23}+\theta_{31}) + (2k+1)\pi \quad\forall k\in\mathbb{Z},
	\end{align*} 
	while the cosine constraint in \eqref{eq:3cycle-sincos} implies that
	\begin{align*}
		\theta_{12} = -(\theta_{23}+\theta_{31}) + 2k\pi \;\text{  or  }\; \theta_{12} = (\theta_{23}+\theta_{31}) + 2k\pi \quad\forall k\in\mathbb{Z}.
	\end{align*} 
	Therefore, we need both the sine and cosine constraints in \eqref{eq:3cycle-sincos} to enforce \eqref{angle 3 def}. Then, 
	expanding the sine and cosine constraints in \eqref{eq:3cycle-sincos} using the sum formulas and replacing the trigonometric terms with their algebraic equivalents in terms of the $c$ and $s$ variables  via \eqref{cos sine def}, we obtain $q_3^1=q_3^2=0$.
	
	\item ($\supseteq$):  
	$q_3^1=q_3^2=p_{12}=p_{23}=p_{31} = 0$ imply that we have
	\begin{align*}
	p_3 &=c_{12}(c_{23}c_{31}-s_{23}s_{31})-s_{12}(s_{23}c_{31}+c_{23}s_{31})  -c_{11}c_{22} c_{33}  & \\
	&= c_{12} (c_{12}c_{33}) - s_{12}(-s_{12}c_{33}) -c_{11}c_{22} c_{33}    &\text{(due to  } & q_3^1=0  \text{ and } q_3^2=0 ) \\
	&= (c_{12} c_{12}+ s_{12}s_{12})c_{33} -c_{11}c_{22} c_{33} &  \\
	&= 0.  &\text{  (due to }& p_{12}=0 )
	\end{align*}
\end{enumerate}
This completes the proof. 
\end{proof}

Finally, we note that one can obtain two more pairs of equalities constructed in the same fashion as \eqref{bilinear 3cycle} by considering other permutations such as
$
\theta_{23} = - ( \theta_{12}+  \theta_{31} ) + 2k\pi
${ and } 
$
\theta_{31} = - ( \theta_{12}+ \theta_{23} ) + 2k\pi.
$

\subsubsection{4-Cycle.}\label{sec:4 cycle} 

Now, let us analyze the cycle constraint over a  4-cycle. Expansion of $\cos (\theta_{12} + \theta_{34}+ \theta_{23} + \theta_{41}) = 1$ together with \eqref{cos sine def} gives $p_4=0$, where $p_4$ is the following quartic polynomial,
\begin{equation}
p_4=(c_{12}c_{34}-s_{12}s_{34})(c_{23}c_{41}-s_{23}s_{41})  -(s_{12}c_{34}+c_{12}s_{34})(s_{23}c_{41}+c_{23}s_{41})  -c_{11}c_{22} c_{33} c_{44}. \label{cos sum 4}
\end{equation}
We again claim that $p_4$ can be replaced with two bilinear constraints. To start with, define the following polynomials
\begin{equation*} 
p_{ij} = c_{ij}^2+s_{ij}^2-c_{ii}c_{jj}, \quad (i,j) \in \{(1,2) , (2,3), (3,4), (4,1)\}
\end{equation*}
and 
\begin{subequations}\label{bilinear 4-cycle}
\begin{align}
q_4^1 &= s_{12}c_{34}+ c_{12}s_{34}+s_{23}c_{41} +c_{23}s_{41}   \\
q_4^2 &=c_{12}c_{34}- s_{12}s_{34} - c_{23}c_{41}+s_{23}s_{41} .
\end{align}
\end{subequations}
Then, we have the following proposition.
\begin{prop}\label{prop:p4}
$ \{(c,s): p_4 = p_{12}=p_{23}=p_{34}=p_{41} = 0\} = \{(c,s):q_4^1=q_4^2=p_{12}=p_{23}=p_{34}=p_{41} = 0\}$.
\end{prop}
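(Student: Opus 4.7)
The plan is to mirror the structure of the proof of Proposition \ref{prop:p3} almost verbatim, since the 4-cycle case has the same algebraic flavor once we group the angles appropriately. I will establish the two inclusions separately and expect the ($\supseteq$) direction to be a short direct calculation while the ($\subseteq$) direction requires a careful trigonometric argument analogous to the one used in the 3-cycle case.

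For the ($\subseteq$) direction, I would start by noting that $p_{12}=p_{23}=p_{34}=p_{41}=0$ guarantees the existence of angles $\theta_{12},\theta_{23},\theta_{34},\theta_{41}$ consistent with \eqref{cos sine def}. The polynomial $p_4$ is derived precisely by expanding $\cos(\theta_{12}+\theta_{23}+\theta_{34}+\theta_{41})=1$ and clearing denominators, so $p_4=0$ is equivalent to the angle sum being an integer multiple of $2\pi$. Writing $A=\theta_{12}+\theta_{34}$ and $B=\theta_{23}+\theta_{41}$, this says $A=-B+2k\pi$, and (as in the 3-cycle argument) this identity holds if and only if both $\sin A=-\sin B$ and $\cos A=\cos B$. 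Expanding these via the sine and cosine sum formulas and substituting the definitions in \eqref{cos sine def}, I would clear the common factor $\sqrt{c_{11}c_{22}c_{33}c_{44}}$ to recover $q_4^1=0$ (from the sine equation) and $q_4^2=0$ (from the cosine equation).

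For the ($\supseteq$) direction, I would proceed by direct substitution, mimicking the calculation at the end of the proof of Proposition \ref{prop:p3}. From $q_4^2=0$, replace $(c_{23}c_{41}-s_{23}s_{41})$ by $(c_{12}c_{34}-s_{12}s_{34})$; from $q_4^1=0$, replace $(s_{23}c_{41}+c_{23}s_{41})$ by $-(s_{12}c_{34}+c_{12}s_{34})$. Substituting into \eqref{cos sum 4} gives
\begin{equation*}
p_4 + c_{11}c_{22}c_{33}c_{44} = (c_{12}c_{34}-s_{12}s_{34})^2 + (s_{12}c_{34}+c_{12}s_{34})^2 = (c_{12}^2+s_{12}^2)(c_{34}^2+s_{34}^2),
\end{equation*}
where the last equality is an elementary identity. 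Finally, $p_{12}=p_{34}=0$ rewrites the right-hand side as $c_{11}c_{22}c_{33}c_{44}$, forcing $p_4=0$. Notice that, as in the 3-cycle case, only two of the four $p_{ij}=0$ constraints are actually needed here, namely those for the edges grouped in $A$.

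The main subtle point — and the step I expect to be the only nontrivial obstacle — is the same one that appears in the 3-cycle proof: $\cos(A+B)=1$ alone is what $p_4=0$ encodes, but one must argue that recovering the full identity $A+B\in 2\pi\mathbb{Z}$ (and hence both the sine and cosine equalities that produce $q_4^1,q_4^2$) is legitimate. This is immediate from the fact that $\cos\alpha=1\Leftrightarrow\alpha\in 2\pi\mathbb{Z}$, so both $\sin\alpha=0$ and $\cos\alpha=1$ follow, which in turn give the two separate bilinear equalities. Everything else reduces to algebraic manipulation that is routine and essentially dictated by the 3-cycle template.
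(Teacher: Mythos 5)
Your proposal is correct and follows essentially the same route as the paper's proof: the ($\subseteq$) direction recovers $q_4^1=q_4^2=0$ from the sine and cosine of $\theta_{12}+\theta_{34}=-(\theta_{23}+\theta_{41})+2k\pi$, and the ($\supseteq$) direction is the same direct substitution into $p_4$. The only (immaterial) difference is that you substitute so as to produce $(c_{12}^2+s_{12}^2)(c_{34}^2+s_{34}^2)$ and invoke $p_{12}=p_{34}=0$, whereas the paper produces $(c_{23}^2+s_{23}^2)(c_{41}^2+s_{41}^2)$ and invokes $p_{23}=p_{41}=0$.
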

\begin{proof} The proof is similar to that of Proposition \ref{prop:p3}.
\begin{enumerate}
	\item ($\subseteq$):  $p_4 = p_{12}=p_{23}=p_{34}=p_{41} = 0 $ imply that we can find $\theta_{12}$, $\theta_{23}$, $\theta_{34}$ and $\theta_{41}$ satisfying
	\begin{equation} \label{angle 4 def}
	\theta_{12}  + \theta_{34} = - ( \theta_{23} +   \theta_{41}) + 2k\pi \quad\forall k\in\mathbb{Z}.
	\end{equation}
	Then, by taking sine and cosine of both sides,  expanding the right hand side using sine and cosine sum formulas and replacing the trigonometric terms with their algebraic equivalents in terms of $c$ and $s$ variables via \eqref{cos sine def}, we obtain $q_4^1=q_4^2=0$.
		
	\item ($\supseteq$):
	$q_4^1=q_4^2=p_{12}=p_{23}=p_{34}=p_{41} = 0$ imply that we have
	\begin{align*}
	p_4 &=
	(c_{12}c_{34}-s_{12}s_{34})(c_{23}c_{41}-s_{23}s_{41})  -(s_{12}c_{34}+c_{12}s_{34})(s_{23}c_{41}+c_{23}s_{41})  -c_{11}c_{22} c_{33} c_{44} \\
	&=(c_{23}c_{41}-s_{23}s_{41})^2+ (s_{23}c_{41}+c_{23}s_{41})^2  -c_{11}c_{22} c_{33} c_{44} \hspace{1.0cm} (\text{due to } q_4^1=0  \text{ and } q_4^2=0) \\ 
	&= (c_{23}^2+s_{23}^2)(c_{41}^2+s_{41}^2) -c_{11}c_{22} c_{33} c_{44}  \\
	&= 0.   \hspace{9.1cm} (\text{due to } p_{23}=0  \text{ and } p_{41}=0)
	\end{align*}
\end{enumerate}
This completes the proof.

 \end{proof}

\subsubsection{Larger Cycles.}\label{sec:largercycles} Now we introduce the cycle decomposition procedure so that the cycle constraint \eqref{angle = 0} over any cycle $C$ can be reformulated as a system of bilinear equalities, where the number of bilinear equations is $O(|C|)$. Let $C$ be a cycle with buses numbered from 1 to $n$.



\noindent {\bf 3-decomposition of a cycle $C$.} Suppose $|C|=n \ge 4$. We can decompose $C$ into 3-cycles by creating artificial edges $(1,i)$ for $ i=3, \dots, n-1$. Now, we apply the exact reformulation from Section \ref{sec:3 cycle} for each of these cycles. The polynomial cycle constraint $ p_{n}$ is replaced by the following set of bilinear equalities:
\begin{subequations} \label{3-decomposition}
\begin{align}
&\tilde s_{1,i}c_{i+1, i+1}+s_{i,i+1}\tilde c_{1,i+1}-\tilde s_{1,i+1}c_{i,i+1}=0 & i=2,3,\dots,n-1 \label{3-decomposition sine} \\
&\tilde c_{1,i}c_{i+1,i+1}-c_{i,i+1}\tilde c_{1,i+1}-\tilde s_{1,i+1}s_{i,i+1}=0 & i=2,3,\dots,n-1 \label{3-decomposition cosine} \\
&\tilde c_{1,i}^2 + \tilde s^2_{1,i} = c_{11}c_{ii} & i=2,3,\dots,n-1 \label{3-decomposition cone} \\
& c_{ij}^2 + s^2_{ij} = c_{ii}c_{jj} & (i,j)\in C.
\end{align}
\end{subequations}
Here, $\tilde c_{1i}$ and $\tilde s_{1i}$ are extra variables representing $\sqrt{c_{11}c_{ii}}\cos(\theta_1-\theta_i)$ and  $-\sqrt{c_{11}c_{ii}}\sin(\theta_1-\theta_i)$, respectively, for  $ i=3, \dots, n-1$,
and $\tilde c_{1i}$ and $\tilde s_{1i}$ for $i=2$ and $i=n$ coincidence with the original variables $c_{1i}, s_{1i}$.

\begin{prop} \label{prop:3-decomp}
Suppose a cycle basis is given for the power network $\mathcal{N} = (\mathcal{B}, \mathcal{L})$. 
Then, constraints (\ref{bilinear 3cycle}) for every 3-cycle in the cycle basis and constraints (\ref{3-decomposition}) for every cycle with length $n\geq 4$ in the cycle basis 
define a valid bilinear extended relaxation of OPF  \eqref{SOCP}-\eqref{arctan cons}. Moreover, it implies that $\sum_{(i,j) \in C} \atan2(s_{ij},c_{ij}) = 2\pi k$ for some integer $k$ for each cycle in the cycle basis. 
\end{prop}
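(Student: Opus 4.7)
The plan is to reduce both assertions—validity of the extended relaxation and the $\atan2$ sum condition on every basis cycle—to Proposition \ref{prop:p3} applied to each triangle of the 3-decomposition.

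For validity, I would start from a feasible solution $(p,q,c,s)$ of the alternative formulation \eqref{SOCP} together with the arctangent constraints \eqref{arctan cons}. These produce bus angles $\theta_i$ consistent with every edge. For each basis cycle $C$ of length $n \geq 4$, relabel its vertices $1,\ldots,n$ with apex bus $1$, and define the auxiliary variables
\begin{equation*}
\tilde c_{1,i} := \sqrt{c_{11}c_{ii}}\cos(\theta_1 - \theta_i), \qquad \tilde s_{1,i} := -\sqrt{c_{11}c_{ii}}\sin(\theta_1 - \theta_i), \quad i = 3,\ldots,n-1,
\end{equation*}
while identifying $\tilde c_{1,j} = c_{1,j}$ and $\tilde s_{1,j} = s_{1,j}$ for $j \in \{2,n\}$. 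Then \eqref{3-decomposition cone} is immediate, and \eqref{3-decomposition sine}--\eqref{3-decomposition cosine} follow by expanding $\cos(\theta_1-\theta_{i+1})$ and $\sin(\theta_1-\theta_{i+1})$ via the identity $\theta_1-\theta_{i+1}=(\theta_1-\theta_i)+(\theta_i-\theta_{i+1})$ and substituting through \eqref{cos sine def}. For every 3-cycle in the basis, the $(\subseteq)$ direction of Proposition \ref{prop:p3} yields \eqref{bilinear 3cycle}. So any OPF-feasible solution lifts to a feasible solution of the bilinear system, proving validity.

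For the $\atan2$ sum implication, the 3-cycle case is immediate: the $(\supseteq)$ direction of Proposition \ref{prop:p3} gives $p_3=0$, which by its derivation from \eqref{cos = 1} and \eqref{cos sine def} is exactly the statement that the signed angle sum around the 3-cycle is a multiple of $2\pi$. For a cycle $C$ of length $n\geq 4$, the key observation is that the bilinear pair \eqref{3-decomposition sine}--\eqref{3-decomposition cosine} at each index $i\in\{2,\ldots,n-1\}$ coincides with $q_3^1=q_3^2=0$ of \eqref{bilinear 3cycle} applied to the triangle $(1,i,i+1)$, after using the antisymmetry $c_{ji}=c_{ij},\ s_{ji}=-s_{ij}$ from \eqref{cosine_sine}. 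Combined with the cone equalities \eqref{3-decomposition cone} and the edge cone equalities $c_{ij}^2+s_{ij}^2=c_{ii}c_{jj}$ on original edges, Proposition \ref{prop:p3} applies to each of the $n-2$ triangles in the decomposition, certifying that the signed angle sum around each triangle is a multiple of $2\pi$.

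Finally, summing these $n-2$ triangle identities for $i=2,\ldots,n-1$ and using skew-symmetry $\theta_{ji}=-\theta_{ij}$, the artificial edge angles on $(1,i)$ for $i=3,\ldots,n-1$ telescope and cancel, leaving precisely the signed angle sum $\theta_{1,2}+\theta_{2,3}+\cdots+\theta_{n-1,n}+\theta_{n,1}$ around the original cycle $C$ as a multiple of $2\pi$, equivalent to $\sum_{(i,j)\in C}\atan2(s_{ij},c_{ij}) = 2\pi k$. The main technical step is recognizing that the 3-decomposition constraints at each $i$ are exactly the $(q_3^1,q_3^2)$ pair of Proposition \ref{prop:p3} for the triangle $(1,i,i+1)$; once this is seen, the rest is bookkeeping, with the only subtlety being consistency with the paper's sign convention $\sin\theta_{ij}=-s_{ij}/\sqrt{c_{ii}c_{jj}}$ so that the telescoping yields the correct oriented sum around $C$.
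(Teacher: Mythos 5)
Your proposal is correct and follows essentially the same route as the paper: the same explicit lift $\tilde c_{1,i}=\sqrt{c_{11}c_{ii}}\cos(\theta_1-\theta_i)$, $\tilde s_{1,i}=-\sqrt{c_{11}c_{ii}}\sin(\theta_1-\theta_i)$ for validity, and the same identification of each pair \eqref{3-decomposition sine}--\eqref{3-decomposition cosine} with the $(q_3^1,q_3^2)$ system of Proposition \ref{prop:p3} on the triangle $(1,i,i+1)$, followed by the telescoping cancellation $\theta_{1,i}+\theta_{i,1}=0$ to recover the angle sum around $C$. No gaps.
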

\begin{proof}
First, let us prove that the proposed relaxation is valid for any feasible solution of the OPF formulation over any cycle $C$ with length $|C|=n\ge4$ in the cycle basis. Without loss of generality, assume that the buses in the cycle are numbered from 1 to $n$. Let $(c, s, \theta)$ be a feasible solution for OPF \eqref{SOCP}-\eqref{arctan cons}. Recall that that we have $c_{i,i+1}=\sqrt{c_{ii}c_{i+1,i+1}}\cos(\theta_i-\theta_{i+1})$ and $s_{i,i+1}=-\sqrt{c_{ii}c_{i+1,i+1}}\sin(\theta_i-\theta_{i+1})$ for $i=1,\dots,n-1$. Then, choose $\tilde c_{1i} =\sqrt{c_{11}c_{ii}}\cos(\theta_1-\theta_i)$, $\tilde s_{1i}=-\sqrt{c_{11}c_{ii}}\sin(\theta_1-\theta_i)$ for each artificial line $(1,i)$ for $i=2, \dots, n-1$. Now, we have
\begin{align*}
& \quad \tilde s_{1,i}c_{i+1, i+1}+s_{i,i+1}\tilde c_{1,i+1}-\tilde s_{1,i+1}c_{i,i+1} 
\\ &= [-\sqrt{c_{11}c_{ii}}\sin(\theta_1-\theta_i)] c_{i+1, i+1} + [-\sqrt{c_{ii}c_{i+1,i+1}}\sin(\theta_i-\theta_{i+1}] [\sqrt{c_{11}c_{ii}}\cos(\theta_1-\theta_i)] \\ & \quad+   [- \sqrt{c_{11}c_{i+1,i+1}}\sin(\theta_1-\theta_{i+1})] [\sqrt{c_{ii}c_{i+1,i+1}}\cos(\theta_i-\theta_{i+1})] 
\\& =  c_{i+1,i+1}\sqrt{c_{11}c_{ii}} \bigl[\sin(\theta_1-\theta_i) - \sin(\theta_i-\theta_{i+1})\cos(\theta_1-\theta_i) + \cos(\theta_i-\theta_{i+1})\sin(\theta_1-\theta_{i+1} \bigr] 
 \\&= 0,
\end{align*}
which proves the validity of \eqref{3-decomposition sine}. A similar argument can be used to prove the validity of \eqref{3-decomposition cosine} as well.
Also, since we have $\tilde c_{1,i}^2 + \tilde s^2_{1,i} - c_{11}c_{ii} = (\sqrt{c_{11}c_{ii}}\cos(\theta_1-\theta_i))^2+(-\sqrt{c_{11}c_{ii}}\sin(\theta_1-\theta_i))^2- c_{11}c_{ii}= c_{11}c_{ii} [\cos^2(\theta_1-\theta_i)+\sin^2(\theta_1-\theta_i) - 1]=0$,  \eqref{3-decomposition cone} follows. Hence, constraints (\ref{3-decomposition}) for every cycle with length $n\geq 4$ are valid for OPF \eqref{SOCP}-\eqref{arctan cons}.

For the second part, it is sufficient to show that adding (\ref{3-decomposition}) to \eqref{SOCP} implies $\cos \left ( \sum_{i=1}^{n-1} \theta_{i,i+1}  +\theta_{n1}   \right ) = 1$ for every cycle $C$ with length $n\geq 4$.  
Using the argument in Proposition \ref{prop:p3}, we know that the equalities in \eqref{3-decomposition} and \eqref{coupling} enforce the cycle constraint over the 3-cycle $(1,i,i+1)$
\begin{align}\label{3-decomp angles}
\theta_{1,i} + \theta_{i,i+1}+\theta_{i+1,1} = 2\pi k_i \quad\quad i=2,\cdots, n-1,
\end{align}
for some integers $k_i$. 
Therefore, summing \eqref{3-decomp angles} over all $i$ and canceling $\theta_{1,i}+\theta_{i,1}=0$, we conclude that $\sum_{i=1}^{n-1} \theta_{i,i+1}  +\theta_{n1}  = 2\pi k$, for some $k \in \mathbb{Z} $.


 \end{proof}

\noindent {\bf 4-decomposition of a cycle $C$.} 
Suppose $|C| \ge 5$ and odd. We can  decompose the cycle $C$ into 4-cycles by creating artificial edges $(1, 2i)$ for $  i=2, 3, \dots, \frac{ n-1}{2}$ and one 3-cycle. Now, we apply the exact reformulation from Section \ref{sec:3 cycle} and \ref{sec:4 cycle}  for each of these cycles. Finally, polynomial $ p_{n}$ is replaced by the following set of bilinear equalities:
\begin{subequations} \label{4-decomposition odd}
\begin{align}
&\tilde c_{1,2i-2}c_{2i-1,2i}-\tilde s_{1,2i-2}s_{2i-1,2i} - \tilde c_{1,2i}c_{2i-2,2i-1} - \tilde s_{1,2i}s_{2i-2,2i-1} = 0    & i=2,\dots, \frac{n-1}{2} \label{4-decomp odd 41}\\ 
&\tilde s_{1,2i-2}c_{2i-1,2i}+\tilde c_{1,2i-2}s_{2i-1,2i} - \tilde s_{1,2i}c_{2i-2,2i-1} + \tilde c_{1,2i}s_{2i-2,2i-1} = 0     & i=2,\dots, \frac{n-1}{2} \label{4-decomp odd 42} \\ 
&\tilde c_{1,n-1}c_{n-1,n} - \tilde s_{1,n-1}s_{n-1,n}  - c_{n,1} c_{n-1,n-1} = 0  & \label{4-decomp odd 31} \\ 
&\tilde s_{1,n-1}c_{n-1,n} +\tilde c_{1,n-1}s_{n-1,n}   +s _{n,1} c_{n-1,n-1} = 0 & \label{4-decomp odd 32} \\
&\tilde s_{1,i}^2 + \tilde c^2_{1,i} = c_{11}c_{ii} &\qquad i=2,3,\dots,n-1\\
& c_{ij}^2 + s^2_{ij} = c_{ii}c_{jj} & (i,j)\in C,
\end{align}
\end{subequations}
where \eqref{4-decomp odd 41}-\eqref{4-decomp odd 42} are constraints on 4-cycles and \eqref{4-decomp odd 31}-\eqref{4-decomp odd 32} are constraints on the last 3-cycle, $\tilde c_{1,2i}, \tilde s_{1,2i}$ for $i=2, 3, \dots, (n-1)/2$ are additional variables and $\tilde c_{12}, \tilde s_{12}$ coincide with the original variables $c_{12}, s_{12}$. 

%

Suppose $|C| \ge 6$ and even.  We can  decompose the cycle $C$ into 4-cycles by creating the artificial edges $(1, 2i)$ for $ i=2,\dots, \frac{n-2}{2}$. Now, we apply the exact reformulation from Section  \ref{sec:4 cycle}  for each of these cycles. Finally, polynomial $ p_{n}$ is replaced by the following set of bilinear equalities:
\begin{subequations} \label{4-decomposition even}
\begin{align}
& \tilde c_{1,2i-2}c_{2i-1,2i}-\tilde s_{1,2i-2}s_{2i-1,2i} - \tilde c_{1,2i}c_{2i-2,2i-1} - \tilde s_{1,2i}s_{2i-2,2i-1} = 0   & i=2,\dots, \frac{n}{2} \\ 
&\tilde s_{1,2i-2}c_{2i-1,2i}+\tilde c_{1,2i-2}s_{2i-1,2i} - \tilde s_{1,2i}c_{2i-2,2i-1} + \tilde c_{1,2i}s_{2i-2,2i-1} = 0    & i=2,\dots, \frac{n}{2}\\
&\tilde s_{1,i}^2 + \tilde c^2_{1,i} = c_{1,1}c_{i,i} & i=2,3,\dots,n-1  \\
& c_{ij}^2 + s^2_{ij} = c_{ii}c_{jj} & (i,j)\in C,
\end{align}
\end{subequations}
where $\tilde c_{1,2i}, \tilde s_{1,2i}$ are additional variables for $i=2,\dots,n/2-1$, and $\tilde c_{1,n}, \tilde s_{1,n}$ coincide with the original variables $c_{1,n}, s_{1,n}$. 

\begin{prop}\label{prop:4-decomp}
Suppose a cycle basis is given for the power network $\mathcal{N} = (\mathcal{B}, \mathcal{L})$. Then,
\begin{enumerate}
\item Constraints \eqref{bilinear 3cycle} for every 3-cycle in the cycle basis,
\item Constraints \eqref{bilinear 4-cycle} for every 4-cycle in the cycle basis,
\item Constraints \eqref{4-decomposition odd} for every odd cycle with length $n\geq 5$ in the cycle basis,
\item Constraints \eqref{4-decomposition even} for every even cycle with length $n\geq 6$ in the cycle basis,
\end{enumerate}
define a valid bilinear extended relaxation of OPF \eqref{SOCP}-\eqref{arctan cons}. Moreover, it implies that $\sum_{(i,j) \in C} \atan2(s_{ij},c_{ij}) = 2\pi k$ for some integer $k$ for each cycle in the cycle basis. 
\end{prop}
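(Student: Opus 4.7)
The plan is to follow the template set by Proposition \ref{prop:3-decomp}, but now employ both the 3-cycle reformulation (Proposition \ref{prop:p3}) and the 4-cycle reformulation (Proposition \ref{prop:p4}) as building blocks. As before, there are two things to prove: (a) validity, i.e.\ every feasible OPF solution can be lifted to a solution of the proposed system by an appropriate choice of the artificial variables $\tilde c_{1,i},\tilde s_{1,i}$; and (b) the cycle-angle conclusion, namely that any feasible point of the extended system satisfies $\sum_{(i,j)\in C}\atan2(s_{ij},c_{ij}) = 2\pi k$ for some integer $k$ on every basis cycle.

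For validity, given a feasible $(c,s,\theta)$ for OPF \eqref{SOCP}--\eqref{arctan cons} on a basis cycle $C$ with vertices $1,2,\dots,n$, the plan is to define, for every artificial edge $(1,2i)$,
\[
\tilde c_{1,2i} = \sqrt{c_{11}c_{2i,2i}}\cos(\theta_1-\theta_{2i}),\qquad \tilde s_{1,2i} = -\sqrt{c_{11}c_{2i,2i}}\sin(\theta_1-\theta_{2i}),
\]
and to verify each 4-cycle equation by substituting the analogous formulas for $c_{i,j},s_{i,j}$ on the original edges. Expanding $\sin$ and $\cos$ of sums via the angle-addition formulas shows that the left-hand sides of \eqref{4-decomp odd 41}--\eqref{4-decomp odd 42} (and the analogous even-cycle constraints in \eqref{4-decomposition even}) reduce to identities of the form $\sqrt{\cdot}\,[\sin(A)-\sin(A)]=0$ and $\sqrt{\cdot}\,[\cos(A)-\cos(A)]=0$ where $A$ is an appropriate sum of $\theta_i-\theta_j$'s. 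The closing 3-cycle equations \eqref{4-decomp odd 31}--\eqref{4-decomp odd 32} in the odd case are handled identically to \eqref{bilinear 3cycle} in Proposition \ref{prop:p3}. The cone equalities $\tilde c_{1,i}^2+\tilde s_{1,i}^2 = c_{11}c_{ii}$ follow directly from $\cos^2+\sin^2=1$, and the edge-cone equalities $c_{ij}^2+s_{ij}^2=c_{ii}c_{jj}$ are already satisfied.

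For the second claim, the strategy is to view each constraint block in \eqref{4-decomposition odd} and \eqref{4-decomposition even} as an instance of the bilinear reformulation $q_4^1=q_4^2=0$ from Proposition \ref{prop:p4} (with edges $(1,2i-2)$, $(2i-2,2i-1)$, $(2i-1,2i)$, $(2i,1)$), and similarly the closing triangle in the odd case as an instance of the $q_3^1=q_3^2=0$ system from Proposition \ref{prop:p3}. Applying these two propositions gives, on each sub-4-cycle,
\[
\theta_{1,2i-2}+\theta_{2i-2,2i-1}+\theta_{2i-1,2i}+\theta_{2i,1} = 2\pi k_i \quad\text{for some }k_i\in\mathbb{Z},
\]
and analogously $\theta_{1,n-1}+\theta_{n-1,n}+\theta_{n,1}=2\pi k^*$ for the final triangle in the odd case. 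Summing these identities over $i$ and using $\theta_{1,j}+\theta_{j,1}=0$, the artificial-edge contributions telescope and cancel in pairs across consecutive sub-cycles, leaving $\sum_{i=1}^{n-1}\theta_{i,i+1}+\theta_{n,1}=2\pi k$ for some integer $k$, which is exactly the desired conclusion.

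The main obstacle I expect is the bookkeeping for the cancellation of artificial-edge angles in the summation step, particularly confirming that both endpoints $(1,2)$ and $(1,n)$ (where $\tilde c,\tilde s$ coincide with the original $c,s$) interface correctly with the recursion on the intermediate artificial edges, and that in the odd-length case the closing 3-cycle glues onto the last 4-cycle consistently. Once this indexing is set up carefully, the rest is a direct appeal to Propositions \ref{prop:p3}--\ref{prop:p4} plus a routine trigonometric expansion, exactly in the spirit of Proposition \ref{prop:3-decomp}.
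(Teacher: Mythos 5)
Your proposal is correct and follows exactly the route the paper intends: the paper's own ``proof'' of Proposition \ref{prop:4-decomp} is the single sentence that it is similar to that of Proposition \ref{prop:3-decomp}, and your write-up supplies precisely the missing details (lifting via the trigonometric definitions of $\tilde c_{1,2i},\tilde s_{1,2i}$ for validity, then invoking Propositions \ref{prop:p3}--\ref{prop:p4} on each sub-cycle and telescoping the artificial-edge angles). The only cosmetic remark is that the closing constraints \eqref{4-decomp odd 31}--\eqref{4-decomp odd 32} are a permuted variant of \eqref{bilinear 3cycle} (of the kind noted at the end of Section \ref{sec:3 cycle}), which your argument handles correctly in substance.
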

The proof is similar to the proof of Proposition \ref{prop:3-decomp}.
%
%

\subsubsection{McCormick Based LP Relaxation and Separation.}\label{sec:McCormickSeparation}
The cycle-based OPF formulations presented in Proposition \ref{prop:3-decomp} and Proposition \ref{prop:4-decomp} are non-convex quadratic problems, for which we can obtain LP relaxation by using McCormick relaxations of the bilinear constraints over cycles. For large networks, including McCormick relaxations for all the cycle constraints may be computationally inefficient. Therefore, we propose a separation routine which generates cutting planes to separate a solution of the classic SOCP relaxation from the McCormick envelopes of the cycle constraints. The separation is applied to every cycle in the cycle basis individually.

For a given cycle $C$, the McCormick relaxation of the bilinear cycle constraints, which could be any one of:  (\ref{bilinear 3cycle}), (\ref{bilinear 4-cycle}), \eqref{3-decomposition}, \eqref{4-decomposition odd}, or \eqref{4-decomposition even}, can be written compactly as follows:
\begin{subequations}\label{lp system}
\begin{align}
& Az + \tilde A \tilde z + B y \le c \label{mccormick and bounds} \\
& Ey = 0, \label{cycle equalities} 
\end{align}
\end{subequations}
where $z$ is a vector composed of the $c,s$ variables in the alternative formulation \eqref{SOCP}, $\tilde z$ is a vector composed of the additional $\tilde c, \tilde s$ variables introduced in the cycle decomposition, and $y$ is a vector of new variables defined to linearize the bilinear terms in the cycle constraints. Constraint \eqref{mccormick and bounds} contains the McCormick envelopes of the bilinear terms and bounds  on the $c,s$ variables, while \eqref{cycle equalities} includes the linearized cycle equality constraints.

Given an optimal solution $z^*$ of the classic SOCP relaxation $\mathcal{A}^*_{SOCP}$, we can solve the following separation problem for a cycle $C$,
\begin{subequations} \label{separation LP}
\begin{align}
	v^*:=\min_{\alpha, \beta, \gamma, \mu, \lambda} \quad & \beta - \alpha^T z^* \\
	\text{s.t.}\quad 
					 & A^T \lambda = \alpha \label{eq:sep LP1}\\
				     & \tilde A^T \lambda = 0 \\
	                 & B^T \lambda + E^T \mu = 0 \\
	                 & c^T \lambda \le \beta, \;\;\lambda \geq 0 \label{eq:sep LP2}\\
					 & -e\leq \alpha \leq e \label{eq:sep LPbound1}\\
					 & -1\le \beta\le 1, \label{eq:sep LPbound2}
\end{align}
\end{subequations}
where \eqref{eq:sep LP1}-\eqref{eq:sep LP2} is the dual system equivalent to the condition that $\alpha^T z\leq \beta$ for all $(z,\tilde z, y)$ satisfies \eqref{lp system}; \eqref{eq:sep LPbound1}-\eqref{eq:sep LPbound2} bounds the coefficients $\alpha, \beta$, and $e$ is the vector of $1$'s. If $v^*<0$, then the corresponding optimal solution $(\alpha,\beta)$ of \eqref{separation LP} gives a separating hyperplane such that $\alpha^Tz^*>\beta$ and $\alpha^T z\leq \beta$ for all $(z,\tilde z, y)$ in \eqref{lp system}. If $v^*\ge 0$, then \eqref{separation LP} certifies that $(z^*, \tilde z, y)$ is contained in the McCormick relaxation \eqref{lp system} for some $\tilde z, y$.


We remark that the McCormick relaxations obtained from 3-decomposition and 4-decomposition of cycles do not dominate one another. Therefore, we use both of them in the separation routine. We also note that the classic SOCP relaxation strengthened by dynamically adding valid inequalities through separation over the McCormick relaxations of the cycle constraints is incomparable to the standard SDP relaxations of OPF. 
\begin{prop} \label{cycle incomp}
	$\mathcal{A}^*_{SOCP}$ strengthened by the valid inequalities from the McCormick relaxation of the cycle constraints is not dominated by nor dominates $\mathcal{R}_{SDP}$.
\end{prop}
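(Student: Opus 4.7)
The plan is to establish non-dominance in both directions by exhibiting concrete witnessing instances, each certifying one direction of the claim.

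For the first direction, that the strengthened SOCP is not contained in $\mathcal{R}_{SDP}$: by part (2) of Theorem \ref{all relaxations} the inclusion $\hat{\mathcal{R}}_{SDP} \subseteq \hat{\mathcal{A}}^*_{SOCP}$ is strict on meshed networks (e.g., case6ww from the MATPOWER library, as already referenced in Table \ref{all socp socpa}). I would fix such a witness $(p^*,q^*,c^*,s^*) \in \mathcal{A}^*_{SOCP}$ whose projection lies outside $\hat{\mathcal{R}}_{SDP}$, and then verify that it still satisfies every McCormick cycle inequality produced by the separation LP \eqref{separation LP}. Since those inequalities have coefficients bounded by \eqref{eq:sep LPbound1}--\eqref{eq:sep LPbound2}, this is a finite feasibility check. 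If necessary, a small perturbation within $\mathcal{A}^*_{SOCP}\setminus \mathcal{R}_{SDP}$ can be used to land in the strict interior of the strengthening cuts so that no cycle inequality is violated.

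For the second direction, that $\mathcal{R}_{SDP}$ is not contained in the strengthened SOCP: the key structural observation is that the McCormick cycle inequalities are derived, via Propositions \ref{prop:3-decomp} and \ref{prop:4-decomp}, from the arctangent sum-to-zero condition \eqref{angle = 0}, whereas $\mathcal{R}_{SDP}$ does not enforce this condition (it is implied only by the dropped rank-one constraint on $X$). I would construct a small meshed instance, a 3-cycle or 4-cycle suffices, together with an SDP-feasible $X \succeq 0$ whose entries satisfy $\sum_{(i,j) \in C}\arg(X_{ij}) \equiv \phi \pmod{2\pi}$ for some $\phi \neq 0$. Setting $(c_{ij},s_{ij})=(\Re X_{ij},-\Im X_{ij})$, the corresponding bilinear cycle equalities in \eqref{bilinear 3cycle} or \eqref{bilinear 4-cycle} fail, and one can then identify an explicit linear cut of the form $\alpha^T z \le \beta$ from the McCormick envelope that is violated by this point. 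Such an $X$ can be built concretely by perturbing a rank-one matrix of the form $V V^*$ (which satisfies the cycle identity) along a PSD direction that changes the phases asymmetrically so as to destroy the cycle condition.

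The main obstacle will be the first direction: producing a point that lies in $\mathcal{A}^*_{SOCP}\setminus\mathcal{R}_{SDP}$ \emph{and} simultaneously respects every cycle McCormick cut is not automatic, because the cycle cuts are derived from constraints that are genuinely valid for OPF and therefore for $\mathcal{R}_{SDP}$-like objects too. In practice we would rely on the computational experiments reported in Section \ref{comp experiment} on IEEE test cases, where the strengthened SOCP bound exceeds certain reduced SDP bounds, to substantiate the claim numerically; theoretically, a near-vertex point of $\mathcal{A}^*_{SOCP}$ that keeps every cycle quantity slack while failing a non-edge $3\times 3$ principal PSD minor gives the required separation.
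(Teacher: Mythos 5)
Your overall strategy --- certify each direction of non-domination with a witnessing instance --- is exactly what the paper does: the proof of Proposition \ref{cycle incomp} is entirely by example in Section \ref{incomparable}, where case 3lmbd (the cycle-strengthened SOCP bound strictly beats the SDP bound) and case 5pjm (the SDP bound strictly beats the cycle-strengthened SOCP bound) from the NESTA small-angle-difference set are reported in Table \ref{incomp example}. Note that a strict gap between optimal objective values of the two relaxations on a concrete instance is the clean certificate here, because domination is defined via containment of the projections onto the $(p,q)$ space, and comparing bounds sidesteps having to reason about projections at all.

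Where you try to replace the numerics with explicit point constructions, there are two genuine gaps. First, in your second direction you infer that because an SDP-feasible $X$ violates the exact cycle identity $\sum_{(i,j)\in C}\arg(X_{ij})\equiv 0 \pmod{2\pi}$, the induced $(c,s)$ must violate some McCormick cycle cut. That does not follow: the cuts are valid for the \emph{McCormick relaxation} of the bilinear system \eqref{bilinear 3cycle}/\eqref{bilinear 4-cycle}, a polyhedral outer approximation that is generally far from tight, so a point failing the exact bilinear equalities can easily satisfy every inequality of \eqref{lp system}; you would still need to run the separation problem \eqref{separation LP} and verify $v^*<0$, which is again a numerical check. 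Second, even if a particular lifted point $(c,s)$ is cut off, that does not show the corresponding $(p,q)$ lies outside the projection of the strengthened relaxation, since another $(c,s)$ with the same $(p,q)$ could survive all cuts. Your first direction has the symmetric difficulty, which you acknowledge and resolve by falling back on the computational experiments; once you do that for both directions you have reproduced the paper's argument, and the explicit constructions are not carrying any additional weight.
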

This result is verified by an example in Section \ref{incomparable}.

\subsection{Arctangent Envelopes} \label{sec:arctan envelope}
In this section, we propose the second approach to strengthen the classic SOCP relaxation $\mathcal{A}^*_{SOCP}$. The key idea is still to incorporate convex approximation of the angle condition \eqref{arctan cons} to the SOCP relaxation. This time, instead of reformulating polynomial constraints over cycles such as \eqref{3-decomposition}, \eqref{4-decomposition odd}, and \eqref{4-decomposition even}, we propose linear envelopes for the arctangent function over a box, and incorporate this relaxation to $\mathcal{A}^*_{SOCP}$.

Our construction uses four linear inequalities to approximate the convex envelope for the following set defined by the arctangent constraint \eqref{arctan cons} for each line $(i,j)\in\mathcal{L}$, 
\begin{equation}
\mathcal{AT}:=\left\{(c,s,\theta)\in\mathbb{R}^3 : \theta = \arctan \left ( \frac sc \right), (c,s)\in[\underline c, \overline c]\times[\underline s, \overline s] \right\},	\label{arctan constraint}
\end{equation}
where we denote $\theta= \theta_j - \theta_i$ and drop $(i,j)$ indices for brevity. We also assume $\underline c>0$. The four corners of the box correspond to four points in the $(c,s,\theta)$ space: 
\begin{subequations}
\begin{align}
   z^1 &= (\underline c, \overline s, \arctan \left( {\overline s}/{\underline c} \right) )\\
   z^2 &= (\overline c, \overline s, \arctan \left( {\overline s}/{\overline c} \right) )\\ 
   z^3 &= (\overline c,  \underline s, \arctan \left( {\underline s}/{\overline c} \right))\\
   z^4 &= (\underline c, \underline s, \arctan \left( {\underline s}/{\underline c} \right)). 
\end{align}
\end{subequations}

Two inequalities that approximate the upper envelop of $\mathcal{AT}$ are described below.
\begin{prop}
Let $\theta = \gamma_1 + \alpha_1 c + \beta_1 s$  and $\theta = \gamma_2 + \alpha_2 c + \beta_2 s$ be the  planes passing through points $\{z^1,z^2, z^3\}$,  and $\{z^1,z^3, z^4\}$, respectively.
Then, two valid inequalities for $\mathcal{AT}$ can be obtained as
\begin{align}\label{eq:arctan-upper}
\gamma_k' + \alpha_k c + \beta_k s \ge \arctan \left( \frac{s}{c} \right)
\end{align}
for all $(c,s) \in[\underline c, \overline c]\times[\underline s, \overline s]$ with $\gamma_k' = \gamma_k + \Delta \gamma_k$, where
\begin{equation} \label{error problem}
\Delta \gamma_k =   \max \left \{  \arctan \left( \frac{s}{c} \right) - (\gamma_k+ \alpha_k c + \beta_k s ) :   c \in [\underline c, \overline c], \ s \in [\underline s, \overline s] \right \}, 
\end{equation} 
for $k=1,2$.
\end{prop}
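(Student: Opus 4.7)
The plan is to exploit the definition of $\Delta\gamma_k$ directly: the upper envelope property follows by essentially a tautology, and the only real work is confirming that the shift $\Delta\gamma_k$ is well defined and finite so that the resulting inequality is meaningful.

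First I would set $f_k(c,s) := \arctan(s/c) - (\gamma_k + \alpha_k c + \beta_k s)$ for $k = 1,2$. Since $\underline c > 0$, the function $\arctan(s/c)$ is continuous (in fact smooth) on the compact rectangle $[\underline c,\overline c]\times[\underline s,\overline s]$, and the affine part is continuous everywhere, so $f_k$ attains its maximum on the box. Hence $\Delta\gamma_k$ is well defined and finite, which makes the shifted constant $\gamma_k' = \gamma_k + \Delta\gamma_k$ meaningful.

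Next, by the very definition of $\Delta\gamma_k$ in \eqref{error problem}, we have $f_k(c,s) \le \Delta\gamma_k$ for every $(c,s)$ in the rectangle. Rearranging gives
\begin{equation*}
\arctan\!\left(\frac{s}{c}\right) \le \gamma_k + \alpha_k c + \beta_k s + \Delta\gamma_k = \gamma_k' + \alpha_k c + \beta_k s,
\end{equation*}
which is exactly \eqref{eq:arctan-upper}. Since $(c,s,\theta)\in\mathcal{AT}$ forces $\theta = \arctan(s/c)$, this establishes validity of the inequality for $\mathcal{AT}$.

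Overall there is no real obstacle in the validity claim itself, since it is an immediate consequence of the definition of $\Delta\gamma_k$; the substantive content is rather the geometric choice of the two triangulating planes through $\{z^1,z^2,z^3\}$ and $\{z^1,z^3,z^4\}$, which ensures that the shifts $\Delta\gamma_k$ are small (so that the resulting cuts are tight). I would briefly remark that the maximization in \eqref{error problem} is a low-dimensional nonconvex problem that can be solved to global optimality by comparing the finitely many stationary points of $f_k$ in the interior (obtained from $\nabla f_k = 0$, i.e., $s/(c^2+s^2) = -\alpha_k$ and $c/(c^2+s^2) = \beta_k$) against the values on the four edges, where the univariate problem reduces to finding roots of $\arctan'$ against a constant; this keeps the construction tractable in practice, even though it is not needed for the proof of validity itself.
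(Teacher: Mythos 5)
Your proof is correct and follows essentially the same route as the paper, which simply observes that validity is immediate from the construction of $\Delta\gamma_k$ in \eqref{error problem} and notes that the nonconvex maximization can be solved by enumerating KKT points. Your write-up adds only the (correct) routine details of compactness, continuity under $\underline c > 0$, and the explicit stationarity conditions.
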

Note that by the construction of \eqref{error problem}, it is evident that $\gamma_k'+\alpha_k c + \beta_k s$ dominates the $\arctan(s/c)$ over the box. The nonconvex optimization problem  \eqref{error problem} can be solved by enumerating all possible Karush-Kuhn-Tucker (KKT) points.
%
%
%
%
%
%

\begin{figure}[t!]
	\centering    
	\caption{Arctangent envelopes from different viewpoints. Red planes are the envelopes.}  \label{arctan figures}
	\subfigure[Lower envelopes.]{\label{fig:a}\includegraphics[width=60mm]{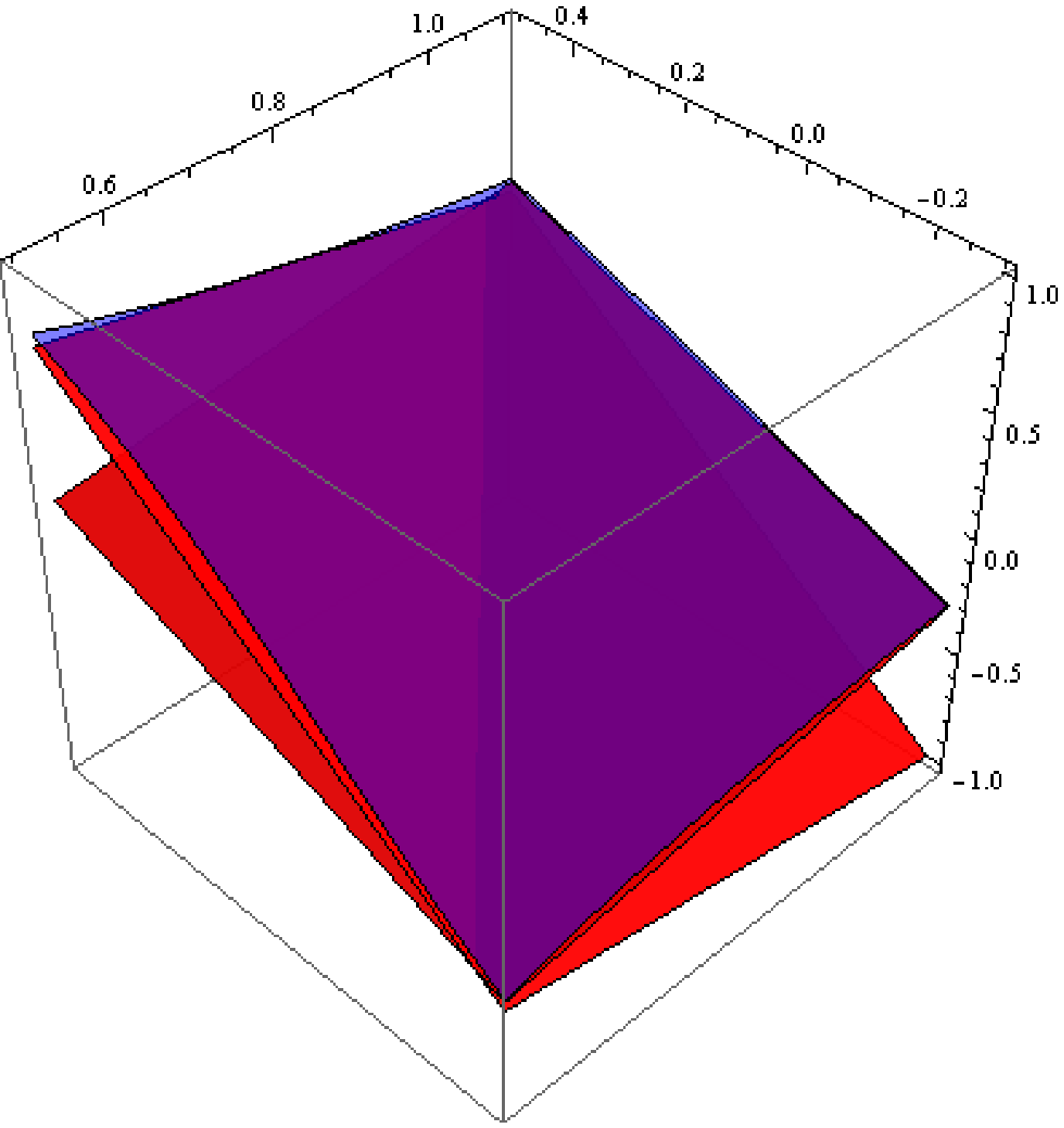}}
	\subfigure[Upper envelopes.]{\label{fig:b}\includegraphics[width=60mm]{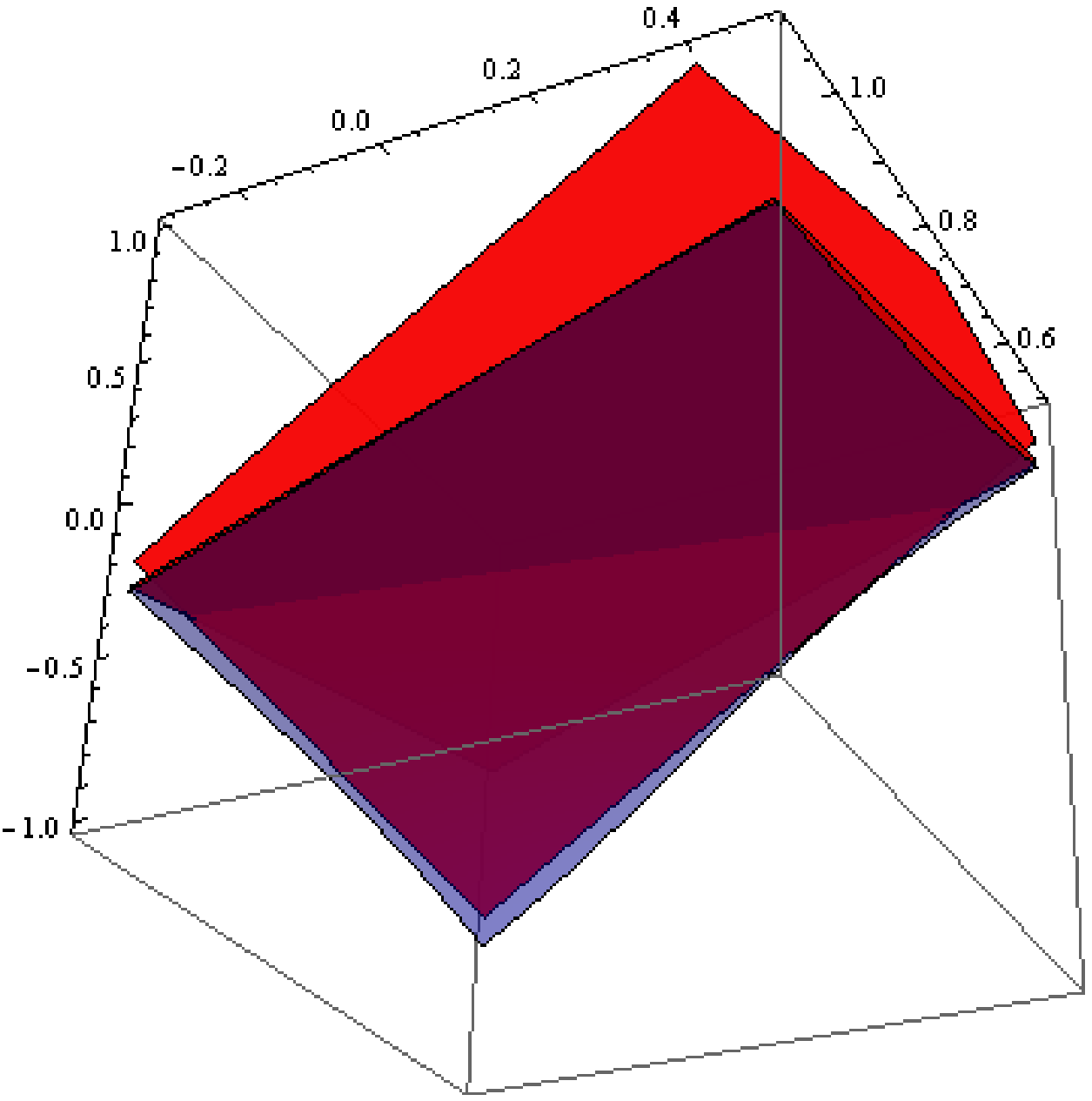}}
\end{figure}

Two inequalities that approximate the lower envelop of $\mathcal{AT}$ are described below.

\begin{prop}
Let $\theta = \gamma_3 + \alpha_3 c + \beta_3 s$  and $\theta = \gamma_4 + \alpha_4 c + \beta_4 s$ be the  planes passing through points $\{z^1,z^2, z^4\}$,  and $\{z^2,z^3, z^4\}$, respectively.
Then, two valid inequalities for  $\mathcal{AT}$ are defined as
\begin{align}\label{eq:arctan-lower}
	\gamma_k' + \alpha_k c + \beta_k s \le \arctan \left( \frac{s}{c} \right)
\end{align}
for all $(c,s) \in[\underline c, \overline c]\times[\underline s, \overline s]$ with $\gamma_k' = \gamma_k - \Delta \gamma_k$, where 
\begin{equation} \label{eq:arctan-lower-deltagamma}
\Delta \gamma_k =   \max \left \{ (\gamma_k+ \alpha_k c + \beta_k s ) - \arctan \left( \frac{s}{c} \right)   :   c \in [\underline c, \overline c], \ s \in [\underline s, \overline s] \right \}, 
\end{equation} 
for $k=3,4$.
\end{prop}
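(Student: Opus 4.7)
The plan is essentially to unwind the definition: validity of \eqref{eq:arctan-lower} is immediate from the construction of $\gamma_k'$, exactly mirroring the upper envelope proposition that precedes it. First I would verify that $\Delta \gamma_k$ is well defined. Because $\underline c > 0$, the function $\arctan(s/c)$ is continuous on the compact rectangle $[\underline c,\overline c]\times[\underline s,\overline s]$, and the affine function $\gamma_k + \alpha_k c + \beta_k s$ is continuous everywhere, so the maximum in \eqref{eq:arctan-lower-deltagamma} is attained and $\Delta \gamma_k$ is a finite real number.

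Second, I would deduce the validity inequality in one line. By the definition of $\Delta \gamma_k$, every $(c,s)\in[\underline c,\overline c]\times[\underline s,\overline s]$ satisfies
$$(\gamma_k + \alpha_k c + \beta_k s) - \arctan(s/c)\le \Delta \gamma_k.$$
Substituting $\gamma_k' = \gamma_k - \Delta \gamma_k$ and rearranging yields $\gamma_k' + \alpha_k c + \beta_k s \le \arctan(s/c)$, which is precisely \eqref{eq:arctan-lower}. The only structural difference from the upper envelope case is the sign convention: there $\Delta \gamma_k$ is added so as to lift the plane above the arctangent surface, while here it is subtracted so as to push the plane below it, and this sign flip is matched by the sign flip in the objective of \eqref{eq:arctan-lower-deltagamma} relative to \eqref{error problem}. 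It is also worth noting, but not needed for validity, that the choice of triangulation $\{z^1,z^2,z^4\}\cup\{z^2,z^3,z^4\}$ (the diagonal $z^2z^4$) differs from the upper envelope's $\{z^1,z^2,z^3\}\cup\{z^1,z^3,z^4\}$ (the diagonal $z^1z^3$); this is a design choice aimed at keeping $\Delta \gamma_k$ small, not a requirement of the proof.

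The only genuine work lies in evaluating the nonconvex maximum in \eqref{eq:arctan-lower-deltagamma}, and I would handle it exactly as suggested for the upper envelope, by enumerating KKT points over the box. Interior stationary points satisfy algebraic conditions on $(c,s)$ coming from the gradient of $\arctan(s/c)$, namely $(-s,c)/(c^2+s^2)$; on each of the four edges the problem reduces to a single-variable transcendental equation whose critical points are easy to locate numerically; and the four corners of the box must also be checked and compared. The main obstacle is merely bookkeeping to make this enumeration exhaustive, which is a computational rather than a conceptual difficulty. Validity of inequality \eqref{eq:arctan-lower} itself is tautological once $\gamma_k'$ is defined as in the statement.
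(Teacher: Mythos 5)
Your proof is correct and follows exactly the route the paper intends: the paper gives no separate argument for the lower envelopes, treating validity as immediate from the construction of $\Delta\gamma_k$ (mirroring the remark after the upper-envelope proposition that the shifted plane "evidently" dominates, with the nonconvex maximization handled by KKT enumeration). Your additional observations on attainment of the maximum via compactness and $\underline c>0$, and on the sign flip relative to \eqref{error problem}, are accurate and consistent with the paper.
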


Figure \ref{arctan figures} shows an example of these upper and lower envelopes. One may further strengthen these envelopes via additional inequalities, although the benefit is minimal according to our experiments.

We have the following proposition, whose proof is provided by an example in Section \ref{incomparable}. 
\begin{prop} \label{arctan incomp}
$\mathcal{A}^*_{SOCP}$ strengthened by arctangent envelopes defined in \eqref{eq:arctan-upper}-\eqref{eq:arctan-lower-deltagamma} is not dominated by nor dominates the SDP relaxation $\mathcal{R}_{SDP}$.
\end{prop}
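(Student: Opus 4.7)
The claim is an incomparability result between two convex relaxations of OPF in the $(p,q)$ projection, so the natural plan is to verify it by exhibiting concrete small instances rather than by an analytic argument. Specifically, I need to establish two non-containment statements: (i) some point lies in the $(p,q)$-projection of $\mathcal{A}^*_{SOCP}$ strengthened with the arctangent envelopes but not in that of $\mathcal{R}_{SDP}$, and (ii) some point lies in the projection of $\mathcal{R}_{SDP}$ but not in that of the strengthened SOCP. The cleanest route is to find a meshed OPF instance on which the two relaxations give strictly different optimal objective values in opposite directions, since a strict gap in one direction immediately produces a projected point in one feasible set but not in the other.

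For the direction that $\mathcal{R}_{SDP}$ does not dominate the strengthened SOCP, I would pick a small meshed network (e.g.\ a 3-bus or the IEEE case6ww / case9Q type of cycle-containing instance) with relatively tight angle-difference bounds on the transmission lines. In this regime the upper/lower arctangent envelopes \eqref{eq:arctan-upper}--\eqref{eq:arctan-lower} become informative: they couple $\theta_j-\theta_i$ to $(c_{ij},s_{ij})$ tightly, and once these $\theta$-inequalities are combined with the cycle identity $\sum_{(i,j)\in C}(\theta_j-\theta_i)=0$, they exclude $(c,s)$ tuples that $\mathcal{R}_{SDP}$ cannot exclude, since the SDP, after projection, does not directly see the angle cycle condition. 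Solving both relaxations on such an instance with a standard conic solver and reporting a strictly higher lower bound for the strengthened SOCP would give (ii).

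For the other direction, that the strengthened SOCP does not dominate $\mathcal{R}_{SDP}$, I would rely on the fact that the arctangent envelopes are purely per-line three-dimensional inequalities in $(c_{ij},s_{ij},\theta_j-\theta_i)$ and so provide only cycle-based couplings across different lines, whereas the full PSD constraint $X\succeq 0$ enforces global semidefinite couplings across all pairs of buses. An instance with loose angle-difference bounds (where the arctangent envelopes are slack by construction of $\Delta\gamma_k$) but with meaningful voltage and reactive-power coupling should expose this: here the SDP would cut off a point that the strengthened SOCP keeps feasible. I would again solve both relaxations on this instance and display the strict gap in the SDP's favor. A convenient candidate is the same family of small meshed IEEE cases, possibly with perturbed angle limits, or the 2-bus / small-cycle construction already used in Section~\ref{sec:propercontainment} to separate $\mathcal{A}^*_{SOCP}$ from $\mathcal{A}_{SDP}$, adapted to the rectangular SDP.

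The main obstacle I expect is engineering a single pair of examples that cleanly separates both directions, because the arctangent envelopes are tight precisely when angle bounds are narrow while the SDP is most powerful when global voltage couplings dominate, and these two regimes tend not to coincide in a single test case. Consequently I anticipate needing two different tuned instances (differing in the $\underline V_i,\overline V_i$ bounds, the line angle limits, or the load profile) rather than one, and the verification of each will be numerical: compute the optimal values of $\mathcal{R}_{SDP}$ and of $\mathcal{A}^*_{SOCP}$ augmented with \eqref{eq:arctan-upper}--\eqref{eq:arctan-lower-deltagamma} on the chosen instance, and report the strict inequality in the appropriate direction. No algebraic identity beyond what is already in the preceding propositions should be required.
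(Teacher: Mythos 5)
Your proposal matches the paper's approach: the paper also proves this proposition purely by example, exhibiting one instance (the NESTA 5-bus case 5pjm, where SDP closes the gap to $0.00\%$ versus SOCPA's $0.45\%$) showing the strengthened SOCP does not dominate $\mathcal{R}_{SDP}$, and another (case 29edin under small angle difference conditions, where SOCPA's gap of $25.94\%$ beats SDP's $28.44\%$) showing the reverse, exactly the two-instance numerical strategy you anticipate. Your expectation that a single instance would not suffice for both directions, and your choice of small-angle-difference regimes to make the arctangent envelopes bite, both align with what the paper actually does.
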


\subsection{SDP Separation}
\label{SOCP SDP sep}

The last approach we propose to strengthen the classic SOCP relaxation is similar in spirit to the separation approach in Section \ref{sec:McCormickSeparation}, but here, instead of separating over the McCormick relaxations of the cycle constraints \eqref{lp system}, we separate a given SOCP relaxation solution from the feasible region of the SDP relaxation of cycles. In the following, we first explore the relationship between the classic SOCP relaxation $\mathcal{A}^*_{SOCP}$ and the SDP relaxation $\mathcal{R}^r_{SDP}$. Then, we explain the separation procedure over cycles.

Let $x \in \mathbb{R}^{2|\mathcal{B}|}$ be a vector of bus voltages defined as $x=[e; f]$ such that $x_i=e_i$ for $i\in\mathcal{B}$ and $x_{i'}=f_i$ for $i'=i+|\mathcal{B}|$. Observe that if we have a set of $c,s$ variables satisfying the cosine, sine definition \eqref{cos sine def} and a matrix variable $W=xx^T$, then the following linear relationship between $c,s$ and $W$ holds, 
\begin{subequations}\label{eq:sdplin}
\begin{align}
c_{ij} &= e_ie_j+f_if_j= W_{ij} + W_{i'j'} & (i,j)\in {\mathcal{L}}\\ 
s_{ij} &= e_if_j-e_jf_i= W_{ij'} - W_{ji'} & (i,j)\in {\mathcal{L}} \\ 
c_{ii} &= e_i^2+f_i^2= W_{ii} + W_{i'i'},   & i\in\mathcal{B}.
\end{align}
\end{subequations}


Given a solution of the classic SOCP relaxation, denoted as $(p^*, q^*, c^*, s^*)$, if there exists a symmetric matrix $W^*\in\mathbb{R}^{2|\mathcal{B}|\times 2|\mathcal{B}|}$ such that $(c^*, s^*)$ and $W^*$ satisfy the linear system \eqref{eq:sdplin}, then $(p^*, q^*, W^*)$ satisfies the flow conservation and voltage bound constraints \eqref{rect sdp real first}-\eqref{rect sdp real last} (because $(c^*,s^*)$ satisfies \eqref{activeAtBusR}-\eqref{voltageAtBusR}) as well as the generator real and reactive bounds \eqref{activeAtGenerator}, \eqref{reactiveAtGenerator}. If, furthermore, $W^*\succeq 0$, then $(p^*, q^*, W^*)$ is a feasible solution to the standard SDP relaxation $\mathcal{R}^r_{SDP}$,  therefore, optimal for the SDP relaxation. If there does not exists such a $W^*$, then we can add a valid inequality to the SOCP relaxation to separate $(c^*, s^*)$ from the set defined by \eqref{eq:sdplin} and $W\succeq 0$. This procedure can be repeated until the optimal SDP relaxation solution is obtained. 
	
Notice that the above separation procedure requires solving an SDP problem with a matrix of the size equal to the original SDP relaxation, which can be quite time consuming. Instead of separating over the full matrix, or equivalently the entire power network, we can consider a separation only over cycles. {In this way, we effectively use an SDP relaxation to provide a convex approximation of the angle condition \eqref{arctan cons} over a cycle.}

In particular, for any cycle $C$ in the power network, we only consider the equalities in \eqref{eq:sdplin} associated with this cycle and the corresponding submatrix matrix $\tilde W \in\mathbb{R}^{2|C|\times 2|C|}$ of $W$.
Consider the following set $\mathcal{S}$
\begin{align} \label{psd cone system}
\mathcal{S}:=\left\{z \in \mathbb{R}^{2|C|} : \exists \tilde W\in\mathbb{R}^{2|C|\times 2|C|} \text{  s.t. } -z_l + A_l \bullet \tilde W  = 0 \quad\forall l\in L, \quad \tilde W\succeq 0 \right\},
\end{align}
where $z=(c,s)$, the linear equality represents the linear system \eqref{eq:sdplin} restricted to the cycle $C$, and $L$ is the index set for all these equalities; $\bullet$ denotes the Frobenius inner product between matrices. We suppress labeling variables with $C$ for conciseness. It should be understood that the construction is done for each cycle in a cycle basis.


For a given $z^*$, the separation problem over $\mathcal{S}$ can be written as follows,
\begin{subequations} \label{sdp sep}
	\begin{align}
	v^*:=\min  &\hspace{0.5em}  - \alpha^T z^*\\
	\mathrm{s.t.}   &\hspace{0.5em} \sum_{l \in L}  \lambda_l A_l \succeq 0 \label{eq:sdp-sep-dual1} \\
	&\hspace{0.5em} \alpha + \lambda=0 \label{eq:sdp-sep-dual2}\\
	&\hspace{0.5em}  - e \le \alpha \le e. 
	\end{align}
\end{subequations}
Since the system \eqref{psd cone system} is strictly feasible (e.g., $\tilde W=I$ and $z_l=A_l \cdot I$, for $l \in L$), strong duality holds between the primal system in $\mathcal{S}$ and the dual system in \eqref{sdp sep}. In particular, \eqref{eq:sdp-sep-dual1}-\eqref{eq:sdp-sep-dual2} is equivalent to $\max_{z\in\mathcal{S}} \alpha^Tz\leq 0$. Therefore, the solution of \eqref{sdp sep}
either gives a separating hyperplane of the form $\alpha^T z \le 0$ such that $\alpha^T z^* > 0$ and $\alpha^Tz\leq 0 \;\;\forall z\in\mathcal{S}$, or certifies that $z^*\in\mathcal{S}$. 
If the optimal objective value $v^*$ of \eqref{sdp sep} is strictly less than 0, then we add the homogeneous inequality $ \alpha ^T z \le 0$ to the classic SOCP relaxation.
We can now apply this procedure to every element of a cycle basis and resolve SOCP with the added linear inequalities. In computational experiments, we observe that a few iterations of this algorithm give very tight approximations to SDP relaxation of rectangular formulation.

\subsection{Obtaining Variable Bounds} \label{sec:bounding}

The proposed McCormick relaxations of the cycle constraints and the convex envelopes for the arctangent functions are useful only when good variable upper and lower bounds are available for the $c$ and $s$ variables. In this section, we explain how to obtain good bounds, which is the key ingredient in the success of the first two proposed methods.
 
Observe that $c_{ij}$ and $s_{ij}$  do not have explicit variable bounds except the implied bounds due to (\ref{voltageAtBusR})  and (\ref{coupling}) as
\begin{equation}
-\overline V_i^2\overline V_j^2 \le c_{ij}, s_{ij} \le \overline V_i^2\overline V_j^2 \quad (i,j) \in \mathcal{L}.
\end{equation}
However, these bounds may be loose, since it is usually the case that phase angle differences in a power network under normal operation are small, implying  $c_{ij} \approx 1$ and $s_{ij}\approx 0$. Therefore, one can try to improve these bounds. A straightforward approach is to optimize $c_{ij}$ and $s_{ij}$ over the feasible region of the SOCP relaxation as is proposed in \cite{kocuk2014}. However, this procedure can be expensive because we need to solve $4|\mathcal{L}|$ SOCPs, each of the size of the classic SOCP relaxation. 

To be computationally efficient, instead of solving the full size SOCPs to tighten variable bounds, we can obtain potentially weaker bounds by solving a reduced version of the full SOCP relaxation. 
In particular, to find variable bounds for $c_{kl}$ and $s_{kl}$ for some $(k,l) \in \mathcal{L}$, consider the buses which can be reached from either $k$ or $l$ in at most $r$ steps. Denote these buses by a set $ \mathcal{B}_{kl} (r)$. For instance,  $ \mathcal{B}_{kl} (0) = \{k,l\}$, $ \mathcal{B}_{kl} (1) = \delta(k) \cup \delta(l)$, etc. 
{Also define} 
$\mathcal{G}_{kl} (r)=\mathcal{B}_{kl} (r) \cap \mathcal{G}$ and 
$\mathcal{L}_{kl} (r) = 
\{ (i,j) \in \mathcal{L} : i \in \mathcal{B}_{kl} (r) \text{ or }  j \in \mathcal{B}_{kl} (r) \}$. {Consider} the  following SOCP relaxation, 
\begin{subequations}\label{bounding SOCP}
\begin{align}
   &\hspace{0.5em} p_i^g-p_i^d = G_{ii}c_{ii} + \sum_{j \in \delta(i)}[ G_{ij}c_{ij} -B_{ij}s_{ij}]   & i& \in\mathcal{B}_{kl} (r)\\
  & \hspace{0.5em} q_i^g-q_i^d = -B_{ii}c_{ii} + \sum_{j \in \delta(i)}[ -B_{ij}c_{ij} -G_{ij}s_{ij}]  & i& \in \mathcal{B}_{kl} (r) \\
  & \hspace{0.5em} \underline V_i^2 \le c_{ii} \le \overline V_i^2    & i& \in \mathcal{B}_{kl} (r+1) \\
  & \hspace{0.5em}  p_i^{\text{min}}  \le p_i^g \le p_i^{\text{max}}     & i& \in \mathcal{G}_{kl} (r) \\
  & \hspace{0.5em}  q_i^{\text{min}}  \le q_i^g \le q_i^{\text{max}}     & i& \in \mathcal{G}_{kl} (r) \\
  & \hspace{0.5em} c_{ij}=c_{ji}, \ \ s_{ij}=-s_{ji}    &(&i,j) \in \mathcal{L}_{kl} (r)\\
  & \hspace{0.5em}  c_{ij}^2+s_{ij}^2  \le c_{ii}c_{jj}     &(&i,j) \in \mathcal{L}_{kl} (r).
\end{align}
\end{subequations}
Essentially, \eqref{bounding SOCP} is the classic SOCP relaxation applied to the part of the power network within $r$ steps of the buses $k$ and $l$. Note that {\eqref{bounding SOCP} for each edge $(k,l)$} can be solved in parallel, since they are independent of each other. We {observed} that a good tradeoff between accuracy and speed is to select $r=2$. In our experiments, larger values of $r$ improve variable bounds marginally.

For artificial edges, we cannot use the above procedure as they do not appear in the flow balance constraints. Instead, we use 
bounds on the original variables that are computed through \eqref{bounding SOCP}  to obtain some improved bounds for the variables on the artificial edges. Since any large cycle can be decomposed into 3-cycles and/or 4-cycles as shown in Section \ref{sec:largercycles}, we only need to consider 3- and 4-cycles here. Let us start from a 3-cycle. Assume the upper and lower bounds on $c_{12},s_{12},c_{23},s_{23}$ are already known, and we want to tighten the bounds on the artificial edge $c_{13},s_{13}$. Then, the bilinear constraints \eqref{bilinear 3cycle} over the cycle can be written as follows:
\begin{subequations}\label{eq:varbounds}
\begin{align}
 {c_{13}}&=\frac{  c_{12}c_{23}-s_{12}s_{23} }{c_{22}}\label{eq:varbndc} \\
 {s_{13}}&=\frac{ s_{12}c_{23}+c_{12}s_{23}}{ c_{22}}.\label{eq:varbnds}
\end{align}
\end{subequations}
Now, we can obtain variable bounds on $c_{13}, s_{13}$ by bounding the right-hand sides of \eqref{eq:varbndc}-\eqref{eq:varbnds} over the box for $c_{12},s_{12},c_{23},s_{23}$, and $c_{22}$. 
In particular, an upper bound on $c_{13}$ can be computed as 
\begin{equation}
\bar{c}_{13}= \begin{cases} {\hat{c}}_{13} / \underline c_{22} & \text{ if } { \hat{ c}}_{13} > 0 \\  { \hat{c}}_{13} / \overline c_{22} & \text{ if } { \hat{ c}}_{13} \le 0,  \end{cases}
\end{equation}
where
\begin{equation}
{\hat{c}}_{13} = \max\{{c_{12}c_{23} : c_{12}\in[\underline c_{12}, \overline c_{12}], c_{23}\in[\underline c_{23}, \overline c_{23}] }\}
			     - \min\{{s_{12}s_{23} : s_{12}\in[\underline s_{12}, \overline s_{12}], s_{23}\in[\underline s_{23}, \overline s_{23}] }\}.
\end{equation}
 A similar procedure can be applied to obtain lower bounds on $c_{13}$ and $s_{13}$.

For a 4-cycle of buses $1,2,3,4$, assume we have bounds on $c_{12},s_{12},c_{23},s_{23},c_{34},s_{34}$, and want to find variable bounds on the artificial edge $c_{14}, s_{14}$. 
Using the two bilinear constraints for the 4-cycle in \eqref{bilinear 4-cycle} and \eqref{coupling}, we can express $c_{14}$ and $s_{14}$ in terms of the other variables as follows
\begin{subequations}
\begin{align}
{ c_{14}}&=\frac{ c_{12}(c_{23}c_{34}- s_{23}s_{34}) - { s_{12}(s_{23}c_{34}+c_{23}s_{34}) }}{c_{22}c_{33}} \\   
{ s_{14}}&=\frac{ s_{12}(c_{23}c_{34}- s_{23}s_{34}) + { c_{12}}(s_{23}c_{34}+c_{23}s_{34}) }{ c_{22}c_{33}}. 
\end{align}
\end{subequations}
Now, proceed in two steps. (1) Define $a:=c_{23}c_{34}- s_{23}s_{34}$ and $b:=s_{23}c_{34}+c_{23}s_{34}$, and calculate bounds $a,b$ as described for the 3-cycle case. (2) Repeat this process to obtain bounds on $c_{14}, s_{14}$.

\section{Computational Experiments}
\label{comp experiment}

In this section, we present the results of extensive computational experiments on standard IEEE instances available from MATPOWER \citep{Matpower} and instances from NESTA 0.3.0 archive \citep{nesta}. The code is written in the C\# language with Visual Studio 2010 as the compiler. For all experiments, a 64-bit laptop with Intel Core i7 CPU with 2.00GHz processor and 8 GB RAM is used. Time is measured in seconds, unless otherwise stated. Conic interior point solver MOSEK 7.1 \citep{MOSEK} is used to solve SOCPs and SDPs.

\subsection{Methods}
We report the results of the following four algorithmic settings:
\begin{itemize}
\item
$\mathsf{SOCP}$: The classic SOCP formulation $\mathcal{A}^*_{SOCP}$ without any improvement. 
\item
{$\mathsf{SOCPA}$}: $\mathsf{SOCP}$ strengthened {by the} arctangent envelopes {\eqref{eq:arctan-upper}-\eqref{eq:arctan-lower-deltagamma}}.
\item
{$\mathsf{S34A}$}: $\mathsf{SOCPA}$ further strengthened {by dynamically generating linear} valid inequalities from {the McCormick relaxation of the} cycle {constraints via the 3- and 4-cycle decompositions and the separation routine developed in Sections \ref{sec:largercycles} and \ref{sec:McCormickSeparation}}.
\item
{$\mathsf{SSDP}$}: $\mathsf{SOCP}$ strengthened {by dynamically generating linear} valid inequalities obtained from {separating an SOCP feasible solution from the SDP relaxation over cycles. The separation routine is developed in Section \ref{SOCP SDP sep}.} 
\end{itemize}

We note that $\mathsf{SOCPA}$ and $\mathsf{S34A}$ require preprocessing to improve variable bounds on {the} $c$ and $s$ {variables as developed in Section \ref{sec:bounding}.} 
This process is parallelized but still it constitutes a sizable portion in the computational cost of the method. 
Constraint generation procedures are also parallelized, since each separation problem is defined for a different cycle in the cycle basis. We use a Gaussian elimination based approach to construct a cycle basis proposed in \cite{kocuk2014switch}.  We repeat the constraint generation algorithm for five iterations or terminate when there are no cuts to be added.

In the following, we compare the above four methods with the SDP relaxation based approaches in Section \ref{sec:comp2SDP} and with a recent quadratic convex relaxation approach in Section  \ref{sec:comp2QC}. We also show that the proposed methods are not dominated by nor dominate the SDP relaxations in Section \ref{sec:incomp2SDP}. Finally, in Section \ref{sec:comprob} we demonstrate the robustness of the proposed methods by solving randomly perturbed instances from the {standard IEEE instances}.

\subsection{Comparison to SDP {Relaxation} Based Methods} \label{sec:comp2SDP}
It is well known in the power systems literature that SDP relaxations have small duality gaps for the standard IEEE instances. However, the computational burden of SDP relaxations is typically very high. 
Chordal extensions and matrix completion type methods are used to significantly accelerate the solution time of the SDP relaxations. A publicly available implementation is called OPF Solver \citep{OPFSolver}. This package exploits the sparsity of underlying network to solve large-scale SDPs more efficiently as discussed in \cite{madani2014, madani2015}. In this section, we compare the accuracy and performance of the four proposed SOCP relaxation based methods to the SDP relaxation implemented in OPF Solver.

\subsubsection{Lower Bound and Computation Time Comparison}


We first compare the computation time and the lower bounds obtained by the SDP relaxation with those obtained by the four types of SOCP relaxations. Table \ref{all socp socpa} shows the results. Here,
``ratio" is defined as the lower bound of an SOCP relaxation divided by that of the SDP relaxation. We can see that the arctangent envelopes in $\mathsf{SOCPA}$ give non-trivial strengthening to the classic SOCP relaxation $\mathsf{SOCP}$. On the other hand, having the arctangent envelopes, the effect of the valid inequalities due to the McCormick relaxation of the cycle constraints is small. The SDP separation approach, $\mathsf{SSDP}$, is the most successful among the
four methods, which achieves the same lower bound as the SDP relaxation in nine instances and provides very tight bounds for the others ($99.96\%$ on average). In terms of computational time, $\mathsf{SOCPA}$, $\mathsf{S34A}$, and $\mathsf{SSDP}$ are roughly one order-of-magnitude faster than the SDP relaxation for large problems (2383-bus and above). We also note that OPF Solver does not support instances with reactive power cost functions, hence the case9Q and case30Q instances are solved using the standard rectangular SDP formulation. The largest instance case3375wp requires at least 3 hours to even construct the SDP model.

\begin{table}[h]\small
\caption{Comparison of lower bounds and computation time (NS: not supported, NA: not applicable).}\label{all socp socpa}
\begin{center}
\begin{tabular}{crrrrrrrrr}
\hline
           &     $\mathsf{SDP}$ & \multicolumn{ 2}{c}{$\mathsf{SOCP}$} & \multicolumn{ 2}{c}{$\mathsf{SOCPA}$} & \multicolumn{ 2}{c}{$\mathsf{S34A}$} & \multicolumn{ 2}{c}{$\mathsf{SSDP}$} \\
\cline{2-10}
  case &       time &      ratio &       time &      ratio &       time &      ratio &       time &     ratio &	time  \\
\hline
   6ww &       1.66 &     0.9937 &       0.02 &     0.9998 &       0.40 &     0.9999 &       0.43 &     1.0000 &       0.46 \\

     9 &       0.84 &     1.0000 &       0.02 &     1.0000 &       0.17 &     1.0000 &       0.18 &     1.0000 &       0.12 \\

    9Q &   NS &     1.0000 &       0.02 &     1.0000 &       0.18 &     1.0000 &       0.19 &     1.0000 &       0.12 \\

    14 &       1.07 &     0.9992 &       0.02 &     0.9992 &       0.41 &     0.9994 &       0.45 &     1.0000 &       0.64 \\

ieee30 &       1.84 &     0.9996 &       0.03 &     0.9996 &       0.78 &     0.9996 &       0.84 &     1.0000 &       1.15 \\

    30 &       2.19 &     0.9943 &       0.06 &     0.9963 &       0.95 &     0.9966 &       1.07 &     0.9993 &       1.22 \\

   30Q &   NS &     0.9753 &       0.07 &     0.9765 &       1.02 &     0.9769 &       1.11 &     1.0000 &       1.32 \\

    39 &       2.20 &     0.9998 &       0.04 &     0.9999 &       0.90 &     0.9999 &       0.99 &     1.0000 &       0.72 \\

    57 &       2.60 &     0.9994 &       0.04 &     0.9994 &       1.43 &     0.9994 &       1.47 &     1.0000 &       2.14 \\

   118 &       4.58 &     0.9976 &       0.11 &     0.9976 &       3.69 &     0.9984 &       4.83 &     0.9997 &       5.19 \\

   300 &       9.81 &     0.9985 &       0.21 &     0.9988 &       7.62 &     0.9989 &      10.40 &     1.0000 &       9.83 \\

2383wp &     682.86 &     0.9932 &       7.11 &     0.9949 &      92.83 &     0.9950 &     130.03 &     0.9984 &     101.31 \\

2736sp &     853.92 &     0.9970 &       5.14 &     0.9977 &      90.93 &     0.9976 &     163.80 &     0.9994 &      94.48 \\

2737sop &     792.25 &     0.9974 &       3.85 &     0.9979 &      95.28 &     0.9979 &     158.80 &     0.9997 &      78.70 \\

2746wop &    1138.06 &     0.9963 &       4.35 &     0.9971 &     102.37 &     0.9973 &     180.42 &     0.9995 &     109.65 \\

2746wp &     941.04 &     0.9967 &       5.79 &     0.9975 &     109.82 &     0.9975 &     186.31 &     0.9998 &     102.16 \\

3012wp &     746.08 &     0.9936 &       7.28 &     0.9946 &     143.10 &     0.9946 &     185.56 &     0.9974 &     109.19 \\

3120sp &     904.90 &     0.9955 &       7.33 &     0.9962 &     127.90 &     0.9965 &     196.05 &     0.9987 &     103.77 \\

3375wp &       $>$ 3hr &         NA &       8.25 &         NA &     149.03 &         NA &     422.35 &         NA &     133.62 \\

Average            & {\bf 380.37} & {\bf 0.9959} & {\bf 2.62} & {\bf 0.9968} & {\bf 48.88} & {\bf 0.9970} & {\bf 86.59} & {\bf 0.9996} & {\bf 45.04} \\
\hline
\end{tabular}  

\end{center}
\end{table}

\subsubsection{Upper Bound and Optimality Gap Comparison}
In this part, we compare the quality of the feasible solutions to the original OPF problem derived from relaxation solutions of our approaches to that of OPF Solver. Let us first describe our method of finding an OPF feasible solution. The procedure is simple: we use an optimal solution of one of the SOCP relaxations as a starting point to the nonlinear interior point solver IPOPT \citep{wachter}, which produces a locally optimal solution to the OPF problem. We {observed} empirically that independent of the relaxation we use, the method always converges to the same OPF solution.
We also note that this method gives the same OPF feasible solutions as MATPOWER 
{ 
and  ``flat start" to local solver,  that is, initializing IPOPT from $(c_{ij}, s_{ij})=(1,0)$ for all $(i,j)\in\mathcal{L}$ and $(c_{ii},\theta_{i}) = (1,0)$ for  all $i \in \mathcal{B}$ as proposed in \cite{Jabr08}. 
} 

\begin{table}[h]\small
\caption{Comparison of upper bounds and percentage optimality gap.}
\label{opt gap comp}
\begin{center}
\begin{tabular}{crrrrrrrrrrr}
\hline
           &             \multicolumn{ 3}{c}{$\mathsf{SDP}$} & \multicolumn{ 2}{c}{$\mathsf{SOCP}$} & \multicolumn{ 2}{c}{$\mathsf{SOCPA}$} & \multicolumn{ 2}{c}{$\mathsf{S34A}$} & \multicolumn{ 2}{c}{$\mathsf{SSDP}$} \\
\cline{2-12}
  case &     \%gap &       time &      ratio &     \%gap &       time &     \%gap &       time &     \%gap &       time &     \%gap &       time \\
\hline
   6ww &         NA &         NR &         NA &       0.63 &       0.13 &       0.02 &       0.48 &       0.01 &       0.45 &       0.00 &       0.53 \\

     9 &         NA &         NR &         NA &       0.00 &       0.04 &       0.00 &       0.19 &       0.00 &       0.20 &       0.00 &       0.17 \\

    9Q &         NA &         NR &         NA &       0.04 &       0.04 &       0.04 &       0.20 &       0.04 &       0.21 &       0.04 &       0.17 \\

    14 &       0.00 &       4.49 &     1.0000 &       0.08 &       0.05 &       0.08 &       0.44 &       0.06 &       0.48 &       0.00 &       0.68 \\

ieee30 &         NA &         NR &         NA &       0.04 &       0.07 &       0.04 &       0.83 &       0.04 &       0.88 &       0.00 &       1.20 \\

    30 &       0.00 &       6.54 &     1.0000 &       0.57 &       0.12 &       0.37 &       1.01 &       0.34 &       1.13 &       0.07 &       1.28 \\

   30Q &         NA &         NR &         NA &       2.48 &       0.11 &       2.35 &       1.07 &       2.32 &       1.16 &       0.00 &       1.36 \\

    39 &       0.01 &       5.09 &     1.0000 &       0.02 &       0.10 &       0.01 &       0.96 &       0.01 &       1.05 &       0.01 &       0.78 \\

    57 &       0.00 &       6.68 &     1.0000 &       0.06 &       0.11 &       0.06 &       1.50 &       0.06 &       1.55 &       0.00 &       2.22 \\

   118 &       0.00 &      11.16 &     1.0000 &       0.25 &       0.27 &       0.24 &       3.86 &       0.16 &       5.00 &       0.03 &       5.34 \\

   300 &       0.00 &      22.65 &     1.0000 &       0.15 &       0.62 &       0.12 &       8.04 &       0.11 &      10.83 &       0.00 &      10.33 \\

2383wp &       0.68 &     911.47 &     0.9969 &       1.05 &      21.39 &       0.89 &     104.71 &       0.88 &     145.29 &       0.54 &     124.34 \\

2736sp &       0.03 &    1181.09 &     0.9997 &       0.30 &      16.15 &       0.23 &      97.37 &       0.24 &     170.92 &       0.06 &     114.81 \\

2737sop &       0.00 &    1093.29 &     1.0000 &       0.26 &      12.05 &       0.21 &     102.27 &       0.21 &     167.59 &       0.03 &     103.81 \\

2746wop &       0.01 &    1470.10 &     0.9999 &       0.37 &       9.19 &       0.29 &     108.53 &       0.27 &     186.91 &       0.05 &     138.39 \\

2746wp &       0.04 &    1251.95 &     0.9996 &       0.33 &      14.08 &       0.25 &     116.18 &       0.25 &     193.91 &       0.02 &     124.07 \\

3012wp &       0.81 &    1314.16 &     0.9934 &       0.79 &      19.65 &       0.70 &     154.72 &       0.70 &     195.56 &       0.41 &     134.19 \\

3120sp &       0.93 &    1633.28 &     0.9916 &       0.54 &      16.14 &       0.47 &     137.70 &       0.44 &     206.20 &       0.22 &     121.77 \\

3375wp &         NA &       $>$3hr &         NA &       0.26 &      18.66 &       0.24 &     158.21 &       0.23 &     431.87 &       0.13 &     157.20 \\

 Average           & {\bf 0.19} & {\bf 685.53} & {\bf 0.9985} & {\bf 0.43} & {\bf 6.79} & {\bf 0.35} & {\bf 52.54} & {\bf 0.34} & {\bf 90.59} & {\bf 0.08} & {\bf 54.88} \\
\hline
\end{tabular}  
\end{center}
\end{table}

OPF Solver utilizes the SDP relaxation solution to obtain OPF feasible solutions. When the optimal matrix variable is rank optimal, e.g., rank one in the SDP relaxation in the real domain \eqref{eq:RSDPreal}, a vector of feasible voltages $e,f$ can be easily derived by 
computing the leading eigenvalue and the corresponding eigenvector of the SDP optimal matrix. 
However, when the rank is greater than one, it is difficult to put a physical meaning to the relaxation solution. OPF Solver uses a penalization approach to reduce the rank of the matrices in order to  obtain nearly feasible solutions to OPF. In particular, the reactive power dispatch and the total apparent power on some lines are penalized with certain penalty coefficients. Empirical results show that 
these coefficients are problem dependent and fine-tuning seems to be essential to obtain high quality feasible solutions. In our comparison, we use the suggested penalty parameters in \cite{madani2014} and exclude the computational burden of fine-tuning these parameters.

We compare the OPF feasible solutions found by our methods against the nearly feasible solutions obtained by OPF Solver. The results are shown in Table \ref{opt gap comp}.  Here,
``ratio" is calculated as the objective cost of an OPF feasible solution of our methods divided by that of OPF Solver. A ratio less than $1$ means our approach produces a better OPF feasible solution than OPF Solver.
The percentage optimality gap, ``\%gap'', is calculated as $\text{\%gap} = 100 \times \frac{z^{\text{UB}} - z^{\text{LB}}}{z^{\text{UB}}}$,
where $z^{\text{UB}}$ is the objective cost of an OPF feasible solution derived from a relaxation, and $z^{\text{LB}}$ is the optimal objective cost of this relaxation. The total computation time, reported as ``time'', includes the time solving the corresponding relaxation and deriving a feasible solution to OPF.
We observe that $\mathsf{SSDP}$ significantly closes the optimality gap to $0.08\%$ or $99.92\%$ to the global optimum on average, improving over the SDP relaxation's $0.19\%$. The ratio of upper bounds is less than $1$ for large systems, which implies 
the quality of the OPF feasible solutions obtained by the penalization method in OPF Solver are not as good as our approaches, even though best known penalty parameters are used. 
{
The reason is that the penalization method does not produce locally optimal solutions. This issue may perhaps be fixed by applying a local solver to improve the  solution obtained from penalization method at the cost of converting the optimal matrix variable to a vector of voltages and calling a local solver.
}
We also note that computing a feasible solution from the SDP relaxation is rather difficult, demonstrated by the large computational time of the $\mathsf{SDP}$ column, whereas $\mathsf{SOCP}$ is about two orders of magnitude faster than $\mathsf{SDP}$, and $\mathsf{SOCPA, S34A, SSDP}$ are roughly one order of magnitude faster.  

{
We also compare SDP bound with the feasible solution found by our SOCP based methods  and calculate the percentage optimality gap.
 Under an optimality threshold of 0.01\%, we observe that SDP is tight for 14 instances out of 19 (the gaps for cases 9Q, 2383wp, 3012wp, 3120sp and 3375wp are respectively 0.04\%, 0.37\%, 0.15\%, 0.09\%, NA).
We note that our SOCP based relaxations are not as successful according to this comparison. $\mathsf{SOCP}$, $\mathsf{SOCPA}$, $\mathsf{S34A}$ and   $\mathsf{SSDP}$ are tight for 1, 2, 3 and 8 instances, respectively. Nevertheless,  we remind the reader that SOCP based methods have small optimality gap (e.g. 0.08\% on the average for $\mathsf{SSDP}$) as can be seen from Table \ref{opt gap comp}.
}

\subsection{Comparison to Other SOCP Based Methods} \label{sec:comp2QC}

Now, we compare the strength of our SOCP based relaxations to other similar methods. A recent work utilizing SOCP relaxations is \cite{coffrin2015}, in which a Quadratic Convex (QC) relaxation of OPF is proposed. It is empirically observed that the phase angles of neighboring buses in a power network are usually close to each other in OPF problems and the QC relaxation is specialized to take advantage of this observation. However, very tight angle bounds are typically not available in practice and choosing very small angles may restrict the feasible region of the OPF problem. In this regards, we remind the reader that our proposed methods do not depend on the availability of such tight angle bounds and our methods use a preprocessing procedure to obtain bounds on the $c$ and $s$ variables. Explicit angle difference bounds can be incorporated into the SOCP relaxations by addition of the following constraints for $(i,j)\in\mathcal{L}$,
\begin{equation}\label{angle diff theta cs}
 -\tan (\overline \theta_{ij}) c_{ij}\le  s_{ij} \le \tan (\overline \theta_{ij}) c_{ij} 
\quad \text{and} \quad 
 -\overline \theta_{ij} \le \theta_i - \theta_j  \le \overline \theta_{ij},
\end{equation}
where $\overline \theta_{ij}$ is the maximum absolute difference between phase angles at buses $i$ and $j$. We also note that although the bounding techniques in Section \ref{sec:bounding} and the arctangent envelopes in $\mathsf{SOCPA, S34A}$ may be adapted to exploit the availability of such bounds, we choose not to do so in the experiments.

In Table \ref{opt gap comp nesta}, we compare the percentage optimality gaps of all the NESTA instances {obtained} by our methods and those achieved by the QC approach reported in \cite{coffrin2015}. The percentage optimality gap is defined the same as ``\%gap'' in the previous section. For the QC results, only instances with an optimality gap more than $1\%$ are reported in \cite{coffrin2015}. Those instances of optimal gaps less than $1\%$ are indicated by blanks in Table \ref{opt gap comp nesta}. The average optimality gap for the QC approach is taken over instances with reported values. The NESTA library has three types of instances, namely, the typical operating conditions, congested operating conditions, and small angle difference conditions (\cite{coffrin2015}). 

From Table \ref{opt gap comp nesta} we have the following observations.
	\begin{enumerate}
		\item For instances from Typical Operating Conditions, each of our three strong SOCP relaxations dominates QC for all instances, except for the 3-bus instance 3lmbd $\mathsf{SOCPA}$ has an optimality gap $1.25\%$ comparing to QC's $1.24\%$. For all the instances where QC achieves an optimality gap less than $1\%$, SOCP relaxations also achieve less than $1\%$ gaps, except for the 1460wp instance, for which a gap slightly higher than $1\%$ is obtained by the strong SOCP relaxations. The $\mathsf{SSDP}$ approach significantly outperforms QC in all instances and on average achieves $1.82\%$ gap versus QC's $5.17\%$. 
		\item A similar picture holds for the Congested Operating Conditions, where the three strong SOCP relaxations dominate QC for all instances, except for $\mathsf{SOCPA}$ on the 3-bus instance 3lmbd. All three strong SOCP relaxations achieve less than $1\%$ optimality gaps on all instances that QC achieves less than $1\%$ gaps. $\mathsf{SSDP}$ again has the best performance and significantly outperforms QC.
		\item For the instances from Small Angle Difference Conditions, {which is a condition that is most suitable for QC}, QC only dominates $\mathsf{SSDP}$ 4 times out of 19 instances reported in (\cite{coffrin2015}), and {QC is better than all of strong SOCP relaxation in 3 out of the 19 instances}. In terms of the average optimality gap, both SOCPA and S34A outperform QC.
	\end{enumerate} 


%
%
%
Computational costs of our methods for NESTA instances are provided in Appendix \ref{app:nesta time}.

\begin{landscape}
\begin{table}[]
\caption{Comparison of percentage optimality gap for NESTA instances.
}
\label{opt gap comp nesta}
\begin{center}

\begin{tabular}{c|rrrrr|rrrrr|rrrrr|}
\cline{2-16}
           &                                  \multicolumn{ 5}{|c}{Typical Operating Conditions} &                                \multicolumn{ 5}{|c}{Congested Operating Conditions} &                              \multicolumn{ 5}{|c|}{Small Angle Difference Conditions} \\
\cline{2-16}
  case         &    $\mathsf{SOCP}$  &      $\mathsf{SOCPA}$ &   $\mathsf{S34A}$ &   $\mathsf{SSDP}$ & QC  &      $\mathsf{SOCP}$  &      $\mathsf{SOCPA}$ &   $\mathsf{S34A}$ &   $\mathsf{SSDP}$ & QC &       $\mathsf{SOCP}$  &      $\mathsf{SOCPA}$ &   $\mathsf{S34A}$ &   $\mathsf{SSDP}$ & QC  \\
\hline

     3lmbd &       1.32 &       1.25 &       0.97 &       0.43 &       1.24 &       3.30 &       1.97 &       1.20 &       1.31 &       1.83 &       4.28 &       2.33 &       1.51 &       2.13 & { 1.24} \\

       4gs &       0.00 &       0.00 &       0.00 &       0.01 &            &       0.65 &       0.16 &       0.12 &       0.00 &            &       4.90 &       0.42 &       0.02 &       0.14 &       0.81 \\

      5pjm &      14.54 &      14.47 &      14.26 &       6.22 &      14.54 &       0.45 &       0.11 &       0.06 &       0.00 &            &       3.61 &       0.45 &       0.34 &       0.01 &       1.10 \\

       6ww &       0.63 &       0.02 &       0.01 &       0.00 &            &      13.33 &       0.35 &       0.14 &       0.00 &      13.14 &       0.80 &       0.02 &       0.01 &       0.00 &            \\

     9wscc &       0.00 &       0.00 &       0.00 &       0.00 &            &       0.00 &       0.00 &       0.00 &       0.00 &            &       1.50 &       0.43 &       0.37 &       0.01 &       0.41 \\

    14ieee &       0.11 &       0.11 &       0.07 &       0.00 &            &       1.35 &       1.32 &       1.32 &       0.00 &       1.35 &       0.07 &       0.06 &       0.06 &       0.00 &            \\

    29edin &       0.14 &       0.08 &       0.05 &       0.00 &            &       0.44 &       0.40 &       0.36 &       0.03 &            &      34.47 &      25.94 &      21.06 &      31.33 & { 20.57} \\

      30as &       0.06 &       0.05 &       0.05 &       0.00 &            &       4.76 &       2.02 &       1.89 &       1.72 &       4.76 &       9.16 &       2.43 &       2.36 &       0.95 &       3.07 \\

     30fsr &       0.39 &       0.23 &       0.23 &       0.03 &            &      45.97 &      42.22 &      41.85 &      40.28 &      45.97 &       0.62 &       0.33 &       0.27 &       0.12 &      \\

    30ieee &      15.65 &       5.24 &       4.79 &       0.00 &      15.44 &       0.99 &       0.86 &       0.85 &       0.08 &            &       5.87 &       2.07 &       1.98 &       0.00 &        3.95      \\

    39epri &       0.05 &       0.02 &       0.02 &       0.01 &            &       2.99 &       0.77 &       0.77 &       0.00 &       2.97 &       0.11 &       0.09 &       0.09 &       0.09 &            \\

    57ieee &       0.06 &       0.06 &       0.06 &       0.00 &            &       0.21 &       0.21 &       0.20 &       0.13 &            &       0.11 &       0.09 &       0.09 &       0.05 &            \\

   118ieee &       2.10 &       1.12 &       0.94 &       0.25 &       1.75 &      44.19 &      40.18 &      38.22 &      39.09 &      44.03 &      12.88 &       7.77 &       7.32 &       9.50 &       8.30 \\

   162ieee &       4.19 &       3.99 &       3.95 &       3.50 &       4.17 &       1.52 &       1.44 &       1.43 &       1.20 &       1.51 &       7.06 &       5.94 &       5.81 &       6.36 &       6.88 \\

   189edin &       0.22 &       0.22 &       0.22 &       0.07 &            &       5.59 &       3.34 &       3.33 &       0.22 &       5.56 &       2.27 &       2.21 &       2.25 &       1.23 &       2.24 \\

   300ieee &       1.19 &       0.78 &       0.71 &       0.30 &       1.18 &       0.85 &       0.51 &       0.47 &       0.15 &            &       1.27 &       0.77 &       0.70 &       0.33 &       1.16 \\

    1460wp &       1.22 &       1.18 &       1.18 &       1.04 &            &       1.10 &       0.98 &       0.84 &       0.68 &            &       1.37 &       1.33 &       1.32 &       1.22 &            \\

  2224edin &       6.22 &       4.30 &       4.25 &       4.60 &       6.16 &       3.16 &       2.51 &       2.43 &       2.58 &       3.15 &       6.43 &       3.91 &       3.87 &       4.80 &       5.79 \\

    2383wp &       1.06 &       0.87 &       0.87 &       0.54 &       1.04 &       1.12 &       0.91 &       0.87 &       0.52 &       1.12 &       4.01 &       2.92 &       2.80 &       2.82 &       2.97 \\

    2736sp &       0.30 &       0.21 &       0.20 &       0.08 &            &       1.33 &       1.14 &       1.12 &       0.91 &       1.32 &       2.34 &       1.86 &       1.86 &       1.92 &       2.01 \\

   2737sop &       0.26 &       0.20 &       0.20 &       0.03 &            &       1.06 &       0.86 &       0.86 &       0.54 &       1.05 &       2.43 &       2.23 &       2.23 &       1.97 & { 2.21} \\

   2746wop &       0.37 &       0.28 &       0.27 &       0.06 &            &       0.49 &       0.35 &       0.34 &       0.17 &            &       2.94 &       2.30 &       2.31 &       2.60 & { 1.83} \\

    2746wp &       0.32 &       0.22 &       0.22 &       0.03 &            &       0.58 &       0.34 &       0.34 &       0.07 &            &       2.44 &       1.68 &       1.67 &       1.83 &       2.48 \\

    3012wp &       1.04 &       0.90 &       0.89 &       0.50 &       1.01 &       1.25 &       0.90 &       0.89 &       0.58 &       1.24 &       2.14 &       2.00 &       1.96 &       1.54 & { 1.92} \\

    3120sp &       0.56 &       0.45 &       0.44 &       0.23 &            &       3.03 &       2.78 &       2.78 &       2.34 &       3.02 &       2.79 &       2.60 &       2.57 &       2.19 &       2.56 \\

    3375wp &       0.53 &       0.47 &       0.46 &       0.29 &            &       0.82 &       0.64 &       0.64 &       0.39 &            &       0.53 &       0.45 &       0.45 &       0.28 &            \\
Average &    {\bf 5.26} &       {\bf 3.66} &       {\bf 3.51} &       {\bf 1.82} &       {\bf 5.17} &      {\bf 8.93} &       {\bf 6.85} &       {\bf 6.62} &       {\bf 6.14} &       {\bf 8.80} &    {\bf 5.94} &       {\bf 3.70} &       {\bf 3.36} &       {\bf 4.38} &       {\bf 3.76} \\
\hline
\end{tabular}  
\end{center}
\end{table}
\end{landscape}

\subsection{Incomparability of the Proposed Methods with SDP Relaxation} \label{sec:incomp2SDP}
\label{incomparable}

We now prove the incomparability of our proposed methods with the SDP relaxation as stated in Propositions \ref{cycle incomp} and \ref{arctan incomp} using three specific instances from the NESTA archive. To start with, let $\mathsf{S34}$ denote a variant of $\mathsf{S34A}$ without the arctangent envelopes. 
The percentage optimality gaps of these instances are presented in Table \ref{incomp example}. First of all, note that the SDP relaxation is not dominated by $\mathsf{S34}$ or $\mathsf{SOCPA}$ as the 5-bus case 5pjm demonstrates. And SDP does not dominate $\mathsf{S34}$ due to case 3lmbd, which proves Proposition \ref{cycle incomp}; also SDP does not dominate $\mathsf{SOCPA}$ due to case 29edin, which proves Proposition \ref{arctan incomp}. These instances also show that $\mathsf{S34}$ and $\mathsf{SOCPA}$ are incomparable.

\begin{table}[htbp]
\captionsetup{width=.88\textwidth}\caption{Percentage optimality gap of three instances from NESTA with small angle difference conditions. 
}
\label{incomp example}
\begin{center}
\begin{tabular}{crrr}
\hline
 case     &       $\mathsf{S34}$ &     $\mathsf{SOCPA}$  &   $\mathsf{SDP}$ \\
\hline
   5pjm	&    0.40 &      0.45 &      0.00  \\

  3lmbd&     1.53 &       2.33 &     2.06\\

    29edin &     31.39 &     25.94&      28.44 \\
\hline
\end{tabular}  
\end{center}
\end{table}

\subsection{Robustness of the Proposed Methods} \label{sec:comprob}
We test the robustness of our methods by solving perturbed instances to the standard IEEE benchmarks. In particular, load values are randomly perturbed 5\% to obtain 10 similar and realistic instances. Results in Table \ref{perturb} show that our methods consistently provide provably good solutions and tight relaxations for OPF problem.

\begin{table}[H]
\caption{Average percentage optimality gaps of perturbed IEEE standard benchmarks.} 
\label{perturb}
\begin{center}
\begin{tabular}{crrrrrrrr}
\hline
           & \multicolumn{ 2}{c}{$\mathsf{SOCP}$} & \multicolumn{ 2}{c}{$\mathsf{SOCPA}$} & \multicolumn{ 2}{c}{$\mathsf{S34A}$} & \multicolumn{ 2}{c}{$\mathsf{SSDP}$} \\
\cline{2-9}
      case &      \%gap &       time &       \%gap &       time &       \%gap &       time &       \%gap &       time \\
\hline
       6ww &       0.62 &       0.06 &       0.02 &       0.26 &       0.01 &       0.32 &       0.00 &       0.46 \\

         9 &       0.00 &       0.04 &       0.00 &       0.19 &       0.00 &       0.20 &       0.00 &       0.11 \\

        9Q &       0.09 &       0.04 &       0.09 &       0.19 &       0.09 &       0.20 &       0.09 &       0.12 \\

        14 &       0.08 &       0.05 &       0.08 &       0.38 &       0.06 &       0.41 &       0.00 &       0.61 \\

    ieee30 &       0.04 &       0.06 &       0.04 &       0.78 &       0.04 &       0.81 &       0.00 &       1.03 \\

        39 &       0.03 &       0.09 &       0.01 &       0.91 &       0.01 &       0.99 &       0.00 &       0.82 \\

        57 &       0.07 &       0.11 &       0.07 &       1.45 &       0.07 &       1.51 &       0.00 &       1.93 \\

       118 &       0.25 &       0.30 &       0.25 &       3.64 &       0.17 &       5.12 &       0.04 &       5.04 \\

       300 &       0.63 &       0.66 &       0.60 &       7.90 &       0.58 &      13.71 &       0.33 &      10.05 \\

    2736sp &       0.30 &      12.67 &       0.23 &     110.42 &       0.23 &     201.81 &       0.05 &     120.92 \\

   2737sop &       0.26 &      11.98 &       0.22 &     108.89 &       0.22 &     188.52 &       0.03 &      92.18 \\

   2746wop &       0.38 &       9.46 &       0.30 &     114.89 &       0.28 &     215.16 &       0.06 &     115.31 \\

    2746wp &       0.32 &      12.43 &       0.25 &     125.95 &       0.25 &     217.88 &       0.05 &     117.24 \\

    3012wp &       0.81 &      16.77 &       0.71 &     125.77 &       0.71 &     167.80 &       0.43 &     116.88 \\

    3120sp &       0.53 &      16.19 &       0.44 &     134.98 &       0.44 &     180.07 &       0.25 &     115.47 \\

    3375wp &       0.26 &      19.30 &       0.24 &     179.34 &       0.23 &     481.92 &       0.19 &     161.67 \\

   Average    & {\bf 0.29} & {\bf 6.26} & {\bf 0.22} & {\bf 57.25} & {\bf 0.21} & {\bf 104.78} & {\bf 0.10} & {\bf 53.74} \\

\hline
\end{tabular}  
\end{center}
\end{table}

We note that we have not reported 3 instances, namely cases 30, 30Q and 2383wp, in Table  \ref{perturb}. In fact,
for case2383wp, all the instances are proven to be infeasible  by SOCP. On the other hand, for cases 30 and 30Q, we are able to find feasible solutions for only one instance. For the remaining nine instances,  six of them are proven to be infeasible by $\mathsf{SOCPA}$, $\mathsf{S34A}$, and $\mathsf{SSDP}$ but not by $\mathsf{SOCP}$. We also note that eight instances are proven to be infeasible by SDP.

%
%
%
%
%

\section{Conclusions and Future Work}
\label{sec:conc}
In this paper, we study the OPF problem, a fundamental optimization problem in electric power system analysis. 
We have the following main conclusions:
\begin{enumerate}
\item The proposed strong SOCP relaxations offer a computationally attractive alternative to the SDP relaxations for practically solving large-scale OPF problems. The lower bounds obtained by the strong SOCP relaxations are extremely close to those of the SDP relaxations, and are not always dominated by the SDP relaxations, but within a computation time that is an order of magnitude faster than the latter. 
In case tight bounds on phase angles are known, the strong SOCP relaxation involving arctan linearization together with McCormick constraints from the cycle decompositions is recommended. In case explicit bounds on phase angles are not known, the third approach of SOCP with SDP separation is recommended.


\item The proposed SOCP relaxation produces a solution that can be conveniently used as a good warm start for a local solver, such as IPOPT. In comparison, recovering a feasible solution from the SDP relaxations is a computationally challenging task, even when the SDP relaxation is tight. 

\item The proposed SOCP relaxations provide stronger bounds than existing quadratic relaxation approaches \cite{coffrin2014, coffrin2015} on most instances. 

\end{enumerate}

There are three future research directions we would like to pursue. Firstly, there is a need to  implement a spatial branch-and-bound algorithm to obtain globally optimal solutions to OPF since there are still some instances that  are not  solved to global optimality although our SOCP based methods close significant gap.
Secondly,  some of the ideas from this paper can be applied to solve AC Optimal Transmission Switching  or multiperiod OPF Problems. 
Finally, on the theory side, we would like to investigate the existence of verifiable sufficient conditions for the exactness of SOCP based approaches introduced in this paper.

\appendix
\section{Times for NESTA Instances}
\label{app:nesta time}

Computational costs of different relaxation methods for NESTA instances are provided in Table~\ref{nesta time}.

\begin{landscape}
\begin{table}[htbp]
\caption{Computational costs of different methods for NESTA instances.
}
\label{nesta time}
\begin{center}
\begin{tabular}{r|rrrr|rrrr|rrrr|}
\cline{2-13}
           &                     \multicolumn{ 4}{|c}{Typical Operating Conditions} &                   \multicolumn{ 4}{|c}{Congested Operating Conditions} &                 \multicolumn{ 4}{|c|}{Small Angle  Conditions} \\
\cline{2-13}
           &      $\mathsf{SOCP}$  &      $\mathsf{SOCPA}$ &   $\mathsf{S34A}$ &   $\mathsf{SSDP}$  &      $\mathsf{SOCP}$  &      $\mathsf{SOCPA}$ &   $\mathsf{S34A}$ &   $\mathsf{SSDP}$  &      $\mathsf{SOCP}$  &      $\mathsf{SOCPA}$ &   $\mathsf{S34A}$ &   $\mathsf{SSDP}$  \\
\hline
     3lmbd &       0.02 &       0.13 &       0.31 &       0.21 &       0.03 &       0.09 &       0.14 &       0.11 &       0.02 &       0.14 &       0.32 &       0.13 \\

       4gs &       0.04 &       0.11 &       0.10 &       0.08 &       0.02 &       0.09 &       0.11 &       0.10 &       0.02 &       0.09 &       0.14 &       0.14 \\

      5pjm &       0.02 &       0.13 &       0.15 &       0.21 &       0.02 &       0.14 &       0.16 &       0.23 &       0.02 &       0.12 &       0.13 &       0.21 \\

       6ww &       0.02 &       0.22 &       0.24 &       0.39 &       0.02 &       0.22 &       0.24 &       0.51 &       0.02 &       0.21 &       0.25 &       0.39 \\

     9wscc &       0.02 &       0.17 &       0.18 &       0.11 &       0.02 &       0.18 &       0.19 &       0.13 &       0.02 &       0.17 &       0.19 &       0.17 \\

    14ieee &       0.02 &       0.45 &       0.48 &       0.67 &       0.02 &       0.42 &       0.46 &       0.66 &       0.02 &       0.40 &       0.44 &       0.65 \\

    29edin &       0.20 &       3.89 &       4.86 &       2.44 &       0.14 &       3.61 &       4.53 &       2.50 &       0.14 &       3.42 &       5.01 &       2.56 \\

      30as &       0.04 &       1.00 &       1.08 &       1.09 &       0.04 &       0.93 &       1.16 &       1.14 &       0.04 &       0.99 &       1.14 &       1.15 \\

     30fsr &       0.03 &       0.99 &       1.06 &       1.11 &       0.04 &       0.91 &       1.05 &       1.15 &       0.04 &       0.88 &       1.16 &       1.14 \\

    30ieee &       0.04 &       0.88 &       1.03 &       1.08 &       0.04 &       0.91 &       1.00 &       1.14 &       0.04 &       0.89 &       0.97 &       1.14 \\

    39epri &       0.04 &       0.89 &       0.97 &       0.64 &       0.04 &       0.95 &       0.96 &       0.86 &       0.04 &       0.90 &       0.98 &       0.82 \\

    57ieee &       0.08 &       2.04 &       2.09 &       2.17 &       0.07 &       1.71 &       1.94 &       2.33 &       0.07 &       1.84 &       2.06 &       2.10 \\

   118ieee &       0.25 &       4.98 &       5.57 &       5.97 &       0.19 &       4.75 &       5.84 &       5.79 &       0.33 &       4.86 &       5.92 &       6.34 \\

   162ieee &       0.25 &       8.97 &      12.14 &      11.37 &       0.23 &       9.30 &      13.50 &      11.93 &       0.34 &       9.04 &      13.65 &      11.53 \\

   189edin &       0.40 &       4.84 &       5.79 &       2.89 &       0.35 &       4.89 &       5.38 &       3.21 &       0.43 &       5.35 &       6.34 &       3.31 \\

   300ieee &       0.47 &       9.93 &      14.96 &      11.58 &       0.44 &      10.54 &      14.26 &      11.11 &       0.55 &       9.74 &      13.11 &      11.76 \\

    1460wp &       2.96 &      63.26 &      83.04 &      45.46 &       3.88 &      74.73 &     117.83 &      52.09 &       3.64 &      78.12 &     114.10 &      42.69 \\

  2224edin &       5.99 &     118.38 &     167.43 &      95.95 &       8.72 &     130.32 &     234.88 &     112.95 &       6.65 &     110.46 &     159.24 &      96.03 \\

    2383wp &       7.59 &     120.68 &     165.48 &     117.79 &       8.39 &     132.29 &     296.21 &     127.18 &       7.08 &     117.14 &     205.04 &     108.82 \\

    2736sp &       5.67 &     119.35 &     221.79 &     130.44 &      10.28 &     151.94 &     294.40 &     191.63 &       5.11 &     112.51 &     150.25 &     121.21 \\

   2737sop &       4.61 &     114.62 &     149.41 &      95.37 &       9.59 &     148.50 &     218.83 &     142.93 &       3.90 &     108.21 &     143.75 &      85.97 \\

   2746wop &       5.22 &     118.63 &     215.98 &     116.85 &       6.48 &     130.92 &     260.75 &     142.77 &       4.49 &     113.43 &     221.35 &     109.04 \\

    2746wp &       6.84 &     123.01 &     203.85 &     117.24 &       7.42 &     136.72 &     229.48 &     137.48 &       5.65 &     116.49 &     209.42 &     127.23 \\

    3012wp &       7.12 &     142.06 &     242.37 &     123.21 &      13.30 &     184.93 &     373.88 &     179.50 &       7.97 &     162.75 &     370.68 &     132.40 \\

    3120sp &       7.72 &     147.32 &     255.43 &     125.04 &      11.34 &     164.37 &     310.54 &     150.02 &       9.38 &     177.76 &     337.48 &     137.24 \\

    3375wp &       9.15 &     184.14 &     306.22 &     169.13 &      18.31 &     239.98 &     600.27 &     247.87 &      10.66 &     208.05 &     436.30 &     194.67 \\
\hline
\end{tabular}  

\end{center}
\end{table}
\end{landscape}

\section*{Acknowledgments}
We wish to thank Dr. Carleton Coffrin of NICTA for providing us NESTA 0.3.0 archive. {We really appreciate his prompt help in this matter.}

\bibliographystyle{ormsv080}
\bibliography{references}


\end{document}